\numberwithin{equation}{section}
\theoremstyle{plain} 
\newtheorem{thm}{Theorem}[section]
\newtheorem{lem}[thm]{Lemma}
\newtheorem{cor}[thm]{Corollary}
\newtheorem{pro}[thm]{Proposition}
\newtheorem{defn}[thm]{Definition}
\theoremstyle{remark}
\newtheorem{rem}[thm]{Remark}
\renewcommand{\Re}{\mathrm{Re}\,}
\renewcommand{\Im}{\mathrm{Im}\,}
\newcommand{\R}{{\mathbb R }}
\newcommand{\Z}{{\mathbb Z}}
\renewcommand{\P}{{\mathbb P}}
\newcommand{\C}{{\mathbb C}}
\newcommand{\ii}{\mathrm{i}}
\newcommand{\deq}{\mathrel{\mathop:}=}
\newcommand{\ie}{\emph{i.e., }}
\newcommand{\wt}{\widetilde}
\newcommand{\wh}{\widehat}
\newcommand{\mf}{\mathfrak}
\newcommand{\mc}{\mathcal}
\newcommand{\mr}{\mathrm}
\newcommand{\mb}{\mathbf}
\newcommand{\bs}{\boldsymbol}
\newcommand*{\Cdot}[1][1.25]{%
  \mathpalette{\CdotAux{#1}}\cdot%
}
\newdimen\CdotAxis
\newcommand*{\CdotAux}[3]{%
  {%
    \settoheight\CdotAxis{$#2\vcenter{}$}%
    \sbox0{%
      \raisebox\CdotAxis{%
        \scalebox{#1}{%
          \raisebox{-\CdotAxis}{%
            $\mathsurround=0pt #2#3$%
          }%
        }%
      }%
    }%
    \dp0=0pt %
    \sbox2{$#2\bullet$}%
    \ifdim\ht2<\ht0 %
      \ht0=\ht2 %
    \fi
    \sbox2{$\mathsurround=0pt #2#3$}%
    \hbox to \wd2{\hss\usebox{0}\hss}%
  }%
}
\renewcommand{\mathbf}[1]{\bs{#1}}
\begin{document}

\begin{frontmatter}

\title{Tracy-Widom limit for Kendall's tau}
\runtitle{TW law for Kendall tau}

\author{\fnms{Zhigang} \snm{ Bao}\ead[label=e1]{mazgbao@ust.hk}\thanksref{t1}}
\address{\printead{e1}}
\thankstext{t1}{The author is partially supported by Hong Kong RGC GRF Grant 16300618}
\address{Department of Mathematics\\ Hong Kong University of Science and Technology}
\affiliation{Hong Kong University of Science and Technology}

\runauthor{Zhigang Bao}

\begin{abstract}
In this paper, we study a high-dimensional random matrix model from nonparametric statistics called the Kendall rank  correlation matrix, which is a natural multivariate extension of the Kendall rank correlation coefficient. We establish the Tracy-Widom law for its largest eigenvalue. It is the first Tracy-Widom law  for a nonparametric random matrix model, and also the first  Tracy-Widom law  for a high-dimensional U-statistic.
\end{abstract}

\begin{keyword}[class=MSC]
\kwd[Primary ]{60B20, 62G10}
\kwd[; secondary ]{62H10, 15B52, 62H25}
\end{keyword}

\begin{keyword}
\kwd{Tracy-Widom law, largest eigenvalue, nonparametric statistics, U-statistics,  random matrices}
\end{keyword}

\end{frontmatter}

\section{Introduction}
Let $\mb{w}=(w_1,\ldots, w_p)'$ be a $p$-dimensional random vector. We assume that all the components of $\mb{w}$ are independent continuous random variables. We do not require the components to be identically distributed, and no moment assumption on the components of $\mb{w}$ is needed. Let  $\mathbf{w}_j=(w_{1j}, \ldots, w_{pj})', j\in \llbracket1, n\rrbracket$ be $n$ i.i.d. samples of $\mathbf{w}$. Hereafter we use the notation $\llbracket a, b\rrbracket:=[a, b]\cap \mathbb{Z}$.  We also denote by $W=(w_{ij})_{p,n}$ the data matrix.  In the paper, we assume that  $p$ and $n$ are comparable. More specifically, we assume
\begin{align}
p=p(n), \qquad c_n:=\frac{p}{n}\to c\in (0,\infty), \qquad \text{if} \quad n\to \infty,  \label{assump on dim}
\end{align}
for some positive constant $c$. 

From  the data matrix $W$, we can further construct a matrix model called {\it Kendall  rank correlation matrix}, originating from nonparametric statistics. The definition is detailed as follows.

\subsection { Kendall rank correlation matrix}

Recall the data matrix $W=(w_{ij})_{p,n}$.  For any given $k\in \llbracket 1, p\rrbracket$, we denote 
\begin{align}
v_{k,(ij)}:=\text{sign}(w_{ki}-w_{kj}), \qquad \forall i\neq j  \label{17091820}
\end{align}
and let 
\begin{align}
\bs{\theta}_{(ij)}:=\frac{1}{\sqrt{M}} (v_{1,(ij)}, \ldots, v_{p,(ij)})', \label{17120201}
\end{align}
 where  for brevity we set 
\begin{align*}
M\equiv M(n):=\frac{n(n-1)}{2}. 
\end{align*}
The  Kendall  rank correlation matrix is defined as the following sum of $M$ rank-one matrices 
\begin{align}
K\equiv K_n:= \sum_{i< j} \bs{\theta}_{(ij)} \bs{\theta}_{(ij)}'=\Theta\Theta'. \label{17101901}
\end{align}
Here we denote by 
\begin{align}
\Theta:=(\bs{\theta}_{(12)},\ldots, \bs{\theta}_{ (1n)}, \bs{\theta}_{(23)},\ldots, \bs{\theta}_{(2n)}  \ldots, \bs{\theta}_{(n-1,n)}). \label{def of Theta}
\end{align} 
Observe that the rank-one matrices $\bs{\theta}_{(ij)} \bs{\theta}_{(ij)}'$'s are not independent. For instance, $\bs{\theta}_{(ij)} \bs{\theta}_{(ij)}'$ and $\bs{\theta}_{(ik)} \bs{\theta}_{(ik)}'$ are correlated even if $j\neq k$.  Moreover, $K$ is a $p\times p$ matrix, and its $(a,b)$-entry is
\begin{align*}
K_{ab}=\frac{1}{M} \sum_{i<j} v_{a, (ij)} v_{b, (ij)}= \frac{1}{M}\sum_{i<j}  \text{sign}(w_{ai}-w_{aj}) \text{sign}(w_{bi}-w_{bj}),
\end{align*}
which is exactly the Kendall rank correlation coefficient between the samples of $w_a$ and those of $w_b$.  Hence, the matrix $K$ is a natural multivariate extension of the Kendall rank correlation coefficient. 

\subsection{Motivation} Since the seminal work of Marchenko and Pastur \cite{MP}, the spectral properties of large dimensional  sample covariance matrix and its variants  have attracted enormous attention. In \cite{MP}, the famous Marchenko-Pastur law (MP-law) for the global spectral distribution of the sample covariance matrices  has been raised.  On the local scale,  Johnstone \cite{Johnstone} proved the Tracy-Widom law (TW law) for the largest eigenvalue of the real Gaussian sample covariance matrix (Wishart matrix)  in the null case, i.e., the population covariance matrix is $I_p$.  Since the largest eigenvalue plays a fundamental role in principal component analysis (PCA), the TW law can be applied to many  PCA-related problems in high-dimensional scenarios.  The TW law was then shown to be universal for sample covariance matrices in the null case, even under more general distribution assumptions; see \cite{Wang, PY}.   In \cite{BPZ1, PY1}, it was also shown that the TW law holds for the (Pearson) sample correlation matrix in the null case.  We also mention \cite{Johansson, Karoui, Onatski} as they give related results for complex sample covariance matrices.  Recently, the universality was further established for more general population; see \cite{BPZ, LS, KY, FJ}. 

Both the sample covariance matrix and (Pearson) sample correlation matrix are parametric models.   Many spectral statistics such as the largest eigenvalue of the sample covariance matrix or correlation matrix are used for testing the hypothesis of {\it independence} among the entries of a random vector. The strategy is certainly feasible for Gaussian vectors. However, for non-Gaussian vectors, even in the classical large $n$ and fixed $p$ case, the idea of comparing population covariance matrix with diagonal matrix cannot be used for an independence test involving uncorrelated but dependent variables. 
On the other hand,  although the TW law was shown to be universal for sample covariance matrices, assumptions on the distribution of the matrix entries are still required to a certain extent; see for instance, the minimal moment condition in \cite{DY}. This moment requirement certainly excludes all heavy-tailed data sets. For the above reasons, a more robust nonparametric approach is needed. 

In classical  nonparametric statistics, the most famous statistics concerning the statistical dependence between two random variables are the Spearman rank correlation coefficient and the Kendall rank correlation coefficient, also known as Spearman's $\rho$ and Kendall's $\tau$. Both of them have natural 
multivariate extensions, which are called Spearman rank correlation matrix and Kendall  rank correlation matrix (c.f. (\ref{17101901})), respectively. Since these models are nonparametric, all the hypothesis tests based on statistics of these models are distribution-free.  However, in contrast to the parametric models, the study on the spectral properties of the high-dimensional nonparametric matrices is much less. Under the null hypothesis, i.e., the components of $\mathbf{w}$ are independent,  the global spectral distributions for the Spearman rank  correlation matrix and Kendall rank correlation matrix  have been derived in \cite{BZ} and \cite{BLR}, respectively.  A CLT for the linear eigenvalue statistics of the Spearman rank  correlation matrix has  been considered in \cite{BLPZ}. However, so far, there is no result on the local eigenvalue statistics such as the largest eigenvalue of these two nonparametric models.
In this work, our aim is to establish the TW law for the Kendall rank correlation matrix. In a companion paper \cite{BaoSpearman}, we show that the TW law also holds for the Spearman rank correlation matrix.   

 Moreover, it is also well-known that Kendall's tau is a U-statistic. The spectral theory on general high-dimensional U-statistics is still unexplored, except for the global law of Kendall's tau in \cite{BLR}. The result in this paper can also be regarded as the first TW law established for a high-dimensional U-statistic. Furthermore, we expect that the method developed in this paper will, to a certain extent, have potential applications to other high-dimensional U-statistics. 

\subsection{Global behavior of the spectrum} \label{subsect. global laws} In this subsection, we first review the result on the global law from \cite{BLR}.  
Let  $\lambda_{1}(K)\geq \ldots\geq \lambda_{p}(K)$ be $p$ ordered eigenvalues of $K$.  Denote the empirical spectral distribution (ESD) of $K$ by 
\begin{align*}
F_{n}^K:= \frac{1}{p}\sum_{i=1}^p \delta_{\lambda_i(K)}. 
\end{align*}
  In \cite{BLR}, it is proved the $F_n^K$ is asymptotically given by a scaled and shifted MP law.   To state the result in \cite{BLR}, we first introduce the Marchencko Pastur law $F_c$ (with parameter $c$), whose density function is given by 

\begin{align*}
\rho_c(x)= \frac{1}{2\pi c}\frac{\sqrt{({d}_{+,c}-x)(x-{d}_{-,c})}}{x}\mathbbm{1}({d}_{-,c}\leq x\leq {d}_{+,c})
\end{align*}
where
$
{d}_{\pm,c}=(1\pm \sqrt{c})^2. 
$
In case $c>1$, in addition, $F_{c}$  has a singular part: a point mass $(1-c^{-1})\delta_0$.

\begin{thm}[Theorem 1 of \cite{BLR}] \label{thm. global law of K} Under the assumption (\ref{assump on dim}), we have that  $F_n^K$ converges weakly (in probability) to  $F^K_c$ whose density is given by 
\begin{align*}
\rho^K_c(x)=\frac32\rho_c(\frac32x-\frac12).
\end{align*}
Hence, $F^K_c(x)= F_c(\frac32x-\frac12)$. 
\end{thm}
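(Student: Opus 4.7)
The plan is to use the Hoeffding decomposition of the kernel $\mathrm{sign}(x-y)$ to expose $K$ as a scaled and shifted sample covariance matrix up to a fluctuation of smaller order, and then invoke the classical Marchenko--Pastur theorem. Since $v_{k,(ij)}$ depends only on the signs of differences $w_{ki}-w_{kj}$, applying the probability integral transform $w_{ki}\mapsto F_k(w_{ki})$ leaves $K$ unchanged, so I may assume the $w_{ki}$ are i.i.d.\ uniform on $[0,1]$. For such data, the Hoeffding decomposition of $h(x,y)=\mathrm{sign}(x-y)$ reads
\begin{align*}
\mathrm{sign}(x-y)=2(x-y)+r(x,y),
\end{align*}
with the remainder $r$ \emph{completely degenerate}, i.e.\ $\mathbb{E}[r(x,Y)]=\mathbb{E}[r(X,y)]=0$. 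Entrywise this yields $\Theta=\Theta_L+\Theta_D$ with $(\Theta_L)_{k,(ij)}=2(w_{ki}-w_{kj})/\sqrt{M}$ and $(\Theta_D)_{k,(ij)}=r_{k,(ij)}/\sqrt{M}$, and hence $K=\Theta_L\Theta_L'+(\Theta_L\Theta_D'+\Theta_D\Theta_L')+\Theta_D\Theta_D'$.

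Next I would identify $\Theta_L\Theta_L'$ with a centered sample covariance. The elementary identity
\begin{align*}
\sum_{i<j}(w_{ki}-w_{kj})(w_{li}-w_{lj})=n\sum_{i=1}^{n}(w_{ki}-\bar w_k)(w_{li}-\bar w_l),\qquad \bar w_k:=\tfrac{1}{n}\sum_{i}w_{ki},
\end{align*}
gives $\Theta_L\Theta_L'=\tfrac{8}{n-1}\widetilde W\widetilde W'$, where $\widetilde W$ is $W$ with each row centered. Since the centered entries are asymptotically i.i.d.\ with variance $\tfrac{1}{12}$, the classical Marchenko--Pastur theorem implies that the ESD of $\Theta_L\Theta_L'$ converges weakly in probability to the dilation $\tfrac{2}{3}F_c$, i.e.\ the law with density $\tfrac{3}{2}\rho_c(\tfrac{3}{2}x)$ supported on $[\tfrac{2}{3}d_{-,c},\tfrac{2}{3}d_{+,c}]$.

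The final step is to show that the remaining pieces contribute only the deterministic shift $\tfrac{1}{3}I_p$ in the limit. Using the complete degeneracy of $r$ together with the column-independence of $W$, one checks $\mathbb{E}[\Theta_L\Theta_D']=0$ and $\mathbb{E}[\Theta_D\Theta_D']=\tfrac{1}{3}I_p$. I would then bound $\mathbb{E}\|\Theta_D\Theta_D'-\tfrac{1}{3}I_p\|_F^2$ and $\mathbb{E}\|\Theta_L\Theta_D'\|_F^2$ by expanding each into sums over quadruples of index pairs: every tuple containing an isolated $r_{k,(ij)}$ has vanishing expectation by complete degeneracy, and a count of the surviving tuples shows each Frobenius norm squared is $O(1)$, hence $o(\sqrt p)$. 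The Hoffman--Wielandt inequality then guarantees that the ESD of $K$ coincides in the limit with that of $\Theta_L\Theta_L'+\tfrac{1}{3}I_p$, namely the pushforward of $\tfrac{2}{3}F_c$ under the shift $x\mapsto x+\tfrac{1}{3}$, which has density $\tfrac{3}{2}\rho_c(\tfrac{3}{2}x-\tfrac{1}{2})$, matching the claim.

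The hard part will be this last step. The matrix $\Theta_D$ is $p\times M$ with $M\sim n^2/2\gg p$ but has strongly dependent columns, so neither standard i.i.d.\ sample-covariance concentration nor a naive operator-norm estimate on $\Theta_D\Theta_D'$ is available; what makes the Frobenius bound viable is that complete degeneracy of $r$ eliminates all but a small fraction of the quadruples in the fourth-moment expansion and collapses the fluctuation to $O(1)$ in Frobenius norm. The cross-term estimate is analogous but slightly subtler because only one of the two factors is fully degenerate, forcing one to exploit mean-zero of $w-\bar w$ and the degeneracy of $r$ simultaneously via conditioning on shared indices.
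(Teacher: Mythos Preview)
Your proposal is correct and is essentially the argument of \cite{BLR}, which this paper cites rather than reproves. The Hoeffding split $\Theta=\Theta_L+\Theta_D$ is exactly the paper's decomposition $\Theta=U+\bar V$ (cf.\ (\ref{17111810})--(\ref{17111811})); the identification of $UU'$ with a scaled centered sample covariance is recorded as (\ref{171129110}); and the assertion that the cross terms are negligible while $\bar V\bar V'\approx\tfrac13 I_p$ is precisely what the introduction describes as ``already observed in \cite{BLR} on the global scale''. Your Frobenius/Hoffman--Wielandt route for the remainder, with complete degeneracy killing the disjoint-index tuples and a count of the $O(n^3)$ surviving tuples giving the $O(1)$ bound, is the standard way to make that observation rigorous.

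One minor wording point: the centered entries of $\wt W$ are not literally i.i.d.\ (each row sums to zero), but since $\wt W\wt W'=W(I_n-\tfrac1n\mathbf{1}\mathbf{1}')W'$ differs from $WW'$ by a rank-one matrix, the MP limit is unchanged; this is how the paper handles it at (\ref{171129110}).
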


 Further, replacing $c$ by $c_n$,  we denote by  $\rho_{c_n}$,  $\rho^K_{c_n}$, $F_{c_n}$, $F^K_{c_n}$, ${d}_{\pm,c_n}$  the analogues of  $\rho_{c}$,  $\rho^K_{c}$, $F_{c}$, $F^K_{c}$, ${d}_{\pm,c}$, respectively.  Further, we introduce the shorthand notation
\begin{align}
{\lambda}_{\pm, c_n}:=\frac23 {d}_{\pm,c_n}+\frac13.  \label{17110101}
\end{align}

\subsection{Main results}\label{subsec. main results}

To state our main results, we denote by $Q:=\frac{1}{n} \mathcal{X}\mathcal{X}'$ a Wishart matrix, where $\mc{X}$ is a $p\times n$ data matrix with i.i.d. $N(0,1)$ variables. Let $\lambda_{i}(Q)$ be the $i$-th largest eigenvalue of $Q$.  Our main results are as follows.  
\begin{thm}[Edge universality of Kendall rank correlation matrix] \label{thm. universality for Kendall} Suppose that  the assumption (\ref{assump on dim}) holds. There exist positive constants $\varepsilon$ and $\delta$ such that for any  $s\in \mathbb{R}$, the following holds for all sufficiently large $n$
\begin{align}
\mathbb{P} \Big(\frac32n^{\frac23}(\lambda_1(K)- {\lambda}_{+,c_n})\leq s-n^{-\varepsilon}\Big)-n^{-\delta}\leq \mathbb{P} \Big(n^{\frac23}(\lambda_1(Q)- d_{+,c_n})\leq s\Big)\nonumber\\
\leq  \mathbb{P} \Big(\frac32 n^{\frac23}(\lambda_1(K)- {\lambda}_{+,c_n})\leq s+n^{-\varepsilon}\Big)+n^{-\delta}. 
\label{17120265}
\end{align}
\end{thm}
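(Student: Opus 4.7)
The plan is to linearize $K$ via a Hoeffding decomposition of the sign kernel and then compare the resulting linear part to a null sample covariance matrix, for which edge universality is already established in the literature.

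Let $F_k$ denote the cdf of $w_{k1}$ and set $u_{ki}:=2F_k(w_{ki})-1$, so that the variables $u_{ki}$ are i.i.d.\ uniform on $[-1,1]$ with mean $0$ and variance $1/3$. Since $\mr{sign}(w_{ki}-w_{kj})=\mr{sign}(u_{ki}-u_{kj})$ and $\E[\mr{sign}(u_{ki}-u_{kj})\,|\,u_{ki}]=u_{ki}$, the Hoeffding decomposition of the kernel $\mr{sign}$ gives
\begin{equation*}
v_{k,(ij)}=(u_{ki}-u_{kj})+r_{k,(ij)},\qquad \E[r_{k,(ij)}\,|\,u_{ki}]=\E[r_{k,(ij)}\,|\,u_{kj}]=0,
\end{equation*}
with $|r_{k,(ij)}|\le 2$. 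This induces the splitting $\Theta=\Theta_L+\Theta_R$, where $\Theta_L$ has columns $M^{-1/2}(\mathbf{u}_i-\mathbf{u}_j)$ and $\Theta_R$ has columns $M^{-1/2}(r_{1,(ij)},\ldots,r_{p,(ij)})'$, and consequently
\begin{equation*}
K=\Theta_L\Theta_L'+(\Theta_L\Theta_R'+\Theta_R\Theta_L')+\Theta_R\Theta_R'=:K_1+\Delta.
\end{equation*}
A direct polarization gives $K_1=\frac{2}{n-1}\sum_{i=1}^n\tilde{\mathbf{u}}_i\tilde{\mathbf{u}}_i'$ with $\tilde{\mathbf{u}}_i:=\mathbf{u}_i-n^{-1}\sum_l\mathbf{u}_l$, so $K_1$ is, up to a rank-one centering correction, a null sample covariance matrix of i.i.d.\ bounded random vectors with population covariance $\tfrac{1}{3}I_p$. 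The known edge-universality results for such matrices (e.g.~\cite{PY,BPZ,DY,KY}) supply the Tracy--Widom limit for $\tfrac{3}{2}n^{2/3}(\lambda_1(K_1)-\tfrac{2}{3}d_{+,c_n})$ matched to $n^{2/3}(\lambda_1(Q)-d_{+,c_n})$. Moreover, $(K)_{kk}\equiv 1$ deterministically whereas $\E[(K_1)_{kk}]=2/3$, so $(\Theta_R\Theta_R')_{kk}$ concentrates near $1/3$; writing $K=(K_1+\tfrac{1}{3}I_p)+(\Delta-\tfrac{1}{3}I_p)$ absorbs this deterministic shift and moves the edge from $\tfrac{2}{3}d_{+,c_n}$ to $\lambda_{+,c_n}=\tfrac{2}{3}d_{+,c_n}+\tfrac{1}{3}$, in agreement with \eqref{17110101}.

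The remaining step is to show that the fluctuating part $\wt\Delta:=\Delta-\tfrac{1}{3}I_p$ does not perturb $\lambda_1$ at the scale $n^{-2/3}$. Since $\|\wt\Delta\|_{\mathrm{op}}$ is of order one, naive perturbation is far too weak; the plan is instead to establish an isotropic local MP law for $K$ on the spectral domain $z=\lambda_{+,c_n}+E+\ii\eta$, $|E|\le n^{-2/3+\epsilon}$, $\eta\sim n^{-2/3+\epsilon}$, directly from the structure of $\Theta$, and then invoke the standard Green function comparison machinery (as in~\cite{BPZ,KY,PY}) along an interpolation between $K$ and the Wishart matrix $Q$. The principal obstacle is controlling the contribution of $\Theta_R$ to entries and quadratic forms of $G_K(z)$: the columns of $\Theta_R$ are \emph{not} independent across pairs $(i,j)$ because distinct pairs share common $u_{ki}$'s, so the usual Schur-complement / resolvent expansion for sample covariance matrices with i.i.d.\ columns does not apply directly. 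The key structural input to exploit is the double degeneracy $\E[r_{k,(ij)}\,|\,u_{ki}]=\E[r_{k,(ij)}\,|\,u_{kj}]=0$, which, combined with the row-wise independence across $k\in\llbracket 1,p\rrbracket$, permits a refined concentration analysis of quadratic forms $\mathbf{x}'\wt\Delta\mathbf{y}$ through estimates for degenerate second-order U-statistics, yielding sharp enough bounds at the edge. Establishing this local law is the main technical challenge; once it is in place, the universality comparison with $Q$ producing \eqref{17120265} follows along established lines.
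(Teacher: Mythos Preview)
Your overall plan matches the paper's: linearize via Hoeffding decomposition $\Theta=U+\bar V$ (your $\Theta_L+\Theta_R$), prove a local law for $K$ exploiting the row-wise independence of $\Theta$, then run a Green function comparison to a reference covariance ensemble. You also correctly identify the target matrix $\wt K=UU'+\tfrac13 I_p$ and the deterministic edge shift. The local-law part is indeed accessible essentially as you outline, via the row independence together with sharp large-deviation bounds for the linear, nonlinear and crossing quadratic forms of a single row (the paper's Propositions~\ref{lem. large deviation linear part} and~\ref{lem. large deviation nonlinear part}).

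The gap is in the comparison step. You propose to interpolate from $K$ to $Q$ (equivalently to $\wt K$) ``along established lines,'' but there is no off-the-shelf scheme that applies here: the row-swapping machinery of \cite{PY,BPZ} requires the rows being swapped to be independent of the rest of the matrix, and a direct continuous interpolation $K_t=\wt K+t\wt\Delta$ is obstructed because $\wt\Delta$ involves $\bar V$, which is \emph{dependent} on $U$ (uncorrelated is not enough---higher-order mixed moments enter the expansion). The paper resolves this with a two-stage comparison. First a \emph{decoupling} step (Proposition~\ref{lem.17100801}): swap the rows of $\bar V$ one at a time for those of an independent Gaussian matrix $H$ with matching covariance, obtaining $\wh K=(U+H)(U+H)'$. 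Row-swapping is legitimate because different rows of $(U,\bar V)$ are independent; the delicate point is that the surviving higher-order joint moments of $(\mb u_k,\bar{\mb v}_k)$ are negligible, and this relies on the \emph{sharper} large deviation bounds for the crossing and nonlinear parts in Proposition~\ref{lem. large deviation nonlinear part} (the extra $n^{-1/2}$ gain over the linear part) together with a posteriori structural estimates such as (\ref{17112801})--(\ref{17112802}). Second a \emph{first-order approximation} step (Proposition~\ref{lem.17100802}): with $H$ now independent and Gaussian, a continuous interpolation $\wh K_t=(U+tH)(U+tH)'+\tfrac13(1-t^2)I_p$ combined with Gaussian integration by parts reduces $\wh K$ to $\wt K$. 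Your reference to ``degenerate second-order U-statistics'' points at the right mechanism underlying the first stage, but the two-stage architecture---decouple the dependent pair $(U,\bar V)$ first, then remove the Gaussian surrogate---is the nontrivial idea your sketch does not supply.
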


\begin{rem} The above theorem can be extended to the joint distribution for the first $k$ leading eigenvalues. 
We refer to Remark 1.4 of \cite{PY} for a similar extension for the sample covariance matrix. The extension here can be done in the same way. 
\end{rem}
From Theorem \ref{thm. universality for Kendall}, we can get the following corollary.
\begin{cor}[Tracy-Widom law for $\lambda_1(K)$]\label{cor. TW for K} Under the assumption of Theorem \ref{thm. universality for Kendall}, we have 
\begin{align*}
\frac32 n^{\frac23}c_n^{\frac16} d_{+,c_n}^{-\frac23}\big(\lambda_1(K)- {\lambda}_{+,c_n}\big)\Longrightarrow \mathrm{TW}_1,
\end{align*}
where $\mathrm{TW}_1$ stands for the Tracy-Widom law of type I.
\end{cor}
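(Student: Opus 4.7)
The plan is to combine the edge universality statement of Theorem \ref{thm. universality for Kendall} with the classical Tracy-Widom law for real Wishart matrices, which together produce the TW limit for $\lambda_1(K)$ essentially for free. For the Wishart matrix $Q=\frac{1}{n}\mathcal{X}\mathcal{X}'$ with $p/n\to c\in(0,\infty)$, Johnstone's theorem \cite{Johnstone} together with its universality extensions \cite{Wang, PY} gives
\begin{align*}
\gamma_n\, n^{\frac23}\bigl(\lambda_1(Q)-d_{+,c_n}\bigr)\Longrightarrow \mathrm{TW}_1,\qquad \gamma_n:=c_n^{\frac16}d_{+,c_n}^{-\frac23},
\end{align*}
where $\gamma_n$ is bounded and bounded away from $0$, converging to $c^{1/6}(1+\sqrt{c})^{-4/3}>0$.

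Given any $s\in\R$, I would apply the sandwich inequalities of Theorem \ref{thm. universality for Kendall} at the parameter $s/\gamma_n$. Multiplying through by the bounded factor $\gamma_n$ rescales the Kendall statistic to $\frac32\gamma_n n^{2/3}(\lambda_1(K)-\lambda_{+,c_n})$, which is exactly the object whose distribution the corollary pins down; the deterministic shifts $n^{-\varepsilon}$ become $\gamma_n n^{-\varepsilon}=o(1)$, and the $n^{-\delta}$ probability error also vanishes. Since $F_{\mathrm{TW}_1}$ is continuous, a standard pinching argument then forces
\begin{align*}
\mathbb{P}\Bigl(\tfrac32\gamma_n n^{\frac23}\bigl(\lambda_1(K)-\lambda_{+,c_n}\bigr)\leq s\Bigr)\to F_{\mathrm{TW}_1}(s)
\end{align*}
at every $s\in\R$, which is the claimed weak convergence.

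The corollary itself presents no substantive obstacle; the genuine work lies entirely inside Theorem \ref{thm. universality for Kendall}. The only minor bookkeeping step is to check that Johnstone's original centering $(\sqrt{n-1}+\sqrt{p})^{2}/n$ and scale $(\sqrt{n-1}+\sqrt{p})((n-1)^{-1/2}+p^{-1/2})^{1/3}/n$ agree, up to errors negligible after multiplication by $n^{2/3}$, with $d_{+,c_n}$ and $(\gamma_n n^{2/3})^{-1}$ respectively; this is a direct asymptotic expansion, after which the corollary follows by the two-line argument above.
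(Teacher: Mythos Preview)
Your proposal is correct and follows the same approach as the paper, which simply notes that the corollary follows directly from Theorem~\ref{thm. universality for Kendall} and the Tracy--Widom limit for $\lambda_1(Q)$ due to Johnstone~\cite{Johnstone}. You have spelled out in more detail the standard pinching argument and the bookkeeping on the centering/scaling constants, but the underlying idea is identical.
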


\subsection{Proof strategy} 

In the sequel, we summarize our proof strategy with a highlight on the novelties.  
Our proof strategy traces back to the seminal works of Erd\H{o}s, Yau and Yin \cite{EYY12,EYY},  where a general framework to prove the universality of local eigenvalue statistics  has been raised.   Roughly speaking,  the strategy in \cite{EYY} for proving the edge universality consists of two major steps. First, one needs to prove a local law for the spectral distribution, from which one can get a control on the location of the eigenvalues on an optimal local scale. Second, with the aid of the local law,  one needs to  perform a Green function comparison between the matrix of interest and a certain reference matrix ensemble, whose edge spectral behavior is already known. 
In the Green function comparison step, one translates the comparison between the distributions of the largest eigenvalues of two random matrices to  a comparison of  their Green functions.   The Green function turns out to be a more convenient object to look into, due to the simple resolvent expansion mechanism. 
An adaptation of this general strategy was used by Pillai and Yin in \cite{PY} to show both the bulk and edge universality of the sample covariance matrices.  Especially, in \cite{PY}, an extended criterion of the local law for covariance type of matrices with independent columns (or rows) was given; see Theorem 3.6 of \cite{PY}. It allows one to relax the independence assumption on the entries within each single column (or row) to a certain extent, as long as some large deviation estimates hold for certain linear and quadratic forms of  each column (or row) of the data matrix; see Lemma 3.4 of \cite{PY}.    This general criterion was then used in \cite{PY1} and \cite{BPZ1} to establish the edge universality of the sample correlation matrices. 
 
In order to illustrate the new ingredients in applying the above general strategy to our model, we first introduce some notations. For any parameter $z\in \mathbb{C}^+$, we denote by $G(z)=(G_{k\ell}(z)):=(K-z)^{-1}$ the Green function of $K$ and  by $m(z):=\frac{1}{p}\text{Tr} G(z)$ the normalized trace of the Green function, which is also the Stieltjes transform of the ESD $F_n^K$. Let $\underline{m}(z)$ be the Stieltjes transform of $F_c^K$.  For our matrix $K$, 
in the step of local law, one needs to establish the following estimates
\begin{align}
|G_{k\ell}(z)- \delta_{k\ell} \underline{m}(z)|\prec \Psi(z),\label{18080703}\\
|m(z)-\underline{m}(z)|\prec\frac{1}{n\Im z} \label{18080704}
\end{align}
in the domain ${\mathcal{D}}(\epsilon)$  (c.f. (\ref{18080701})). We also refer to (\ref{18080702}) and Definition \ref{definition of stochastic domination} for the definition of $\Psi(z)$ and the notation $\prec$, respectively. It is now well understood that a large deviation estimate of $\lambda_i(K)$ around its classical location can be derived from the local law. However,  the large deviation estimate does not tell the $\mathrm{TW}$ law of $\lambda_1(K)$ directly, although  together with (\ref{18080703}) and (\ref{18080704}) it will serve as an important input for the proof of the $\mathrm{TW}$ law.  As we mentioned above, for  $\mathrm{TW}$ law, as the next step,  we need to conduct a Green function comparison. In this step, we will compare the distribution function of $\lambda_1(K)$ with that of $\lambda_1(\widetilde{K})$,  where $\widetilde{K}$ (c.f. (\ref{18080710})) is a shifted covariance matrix and the law of $\lambda_1(\widetilde{K})$ is known to be $\mathrm{TW}_1$. The comparison of the distributions  can be translated into the comparison of the Green functions, and it suffices to show 
\begin{align}
&\Big| \mathbb{E} F\Big( n \int_{E_1}^{E_2} \Im m (x+{\lambda}_{+,c_n}+\mathrm{i}\eta){\rm d} x\Big)\nonumber\\
&\qquad\qquad-\mathbb{E} F\Big( n \int_{E_1}^{E_2} \Im \wt{m} (x+{\lambda}_{+,c_n}+\mathrm{i}\eta){\rm d} x\Big)\Big|\leq n^{-\delta}, \label{18080720}
\end{align}
where $F$ is a smooth test function and $\wt{m}$ stands for the Stieltjes transform of the ESD of $\wt{K}$. We refer to Proposition \ref{lem.17100801} for the setting of $\eta$, $E_1$ and $E_2$. The proof of (\ref{18080720}) will heavily rely on (\ref{18080703}) and (\ref{18080704}).

As we mentioned above, the Kendall rank correlation matrix is a multivariate U-statistic. Its structure is significantly different from the sample covariance matrix or correlation matrix.   Although the rows of $\Theta$ are mutually independent,  there is a strong dependence structure among the  entries   within each row.  Consequently, both the proofs of the two steps, i.e., local law and Green function comparison, require novel ideas. 

The starting point of the whole proof is (a variant of) Hoeffding decomposition \cite{Hoeffding},  which is already used  for the global law in \cite{BLR}. Specifically, for Kendall rank correlation, we can decompose $v_{k,(ij)}$ (c.f. (\ref{17091820})) as 
\begin{align}
v_{k,(ij)}=u_{k,(ij)}+\bar{v}_{k,(ij)},  \label{17111810}
\end{align}
where
\begin{align}
u_{k,(ij)}:=\mathbb{E}\big( \text{sign}(w_{ki}-w_{kj})|w_{ki}\big)+\mathbb{E}\big( \text{sign}(w_{ki}-w_{kj})|w_{kj}\big),  \label{17111811}
\end{align} 
and we take the above as the definition of $\bar{v}_{k,(ij)}$. It is easy to check that $u_{k,(ij)}$ and $\bar{v}_{k,(ij)}$ are uncorrelated.  Correspondingly, we set the $p\times M$ matrices $U=\frac{1}{\sqrt{M}}(u_{k,(ij)})_{k,(ij)}$ and $\bar{V}=\frac{1}{\sqrt{M}}(\bar{v}_{k,(ij)})_{k,(ij)}$. Hence, we have the decomposition $\Theta=U+\bar{V}$. In the sequel, we will call $U$ the {\it linear } part of $\Theta$, and $\bar{V}$ the {\it nonlinear } part of $\Theta$.  It will be seen that $UU'$  is indeed a covariance type of matrix and its spectral property can be obtained from the results on sample covariance matrices easily.  However, in $K=\Theta\Theta'=(U+\bar{V})(U+\bar{V})'$, we also have the crossing parts $\bar{V}U'$, $U\bar{V}'$ and the purely nonlinear part $\bar{V}\bar{V}'$. The nonlinear term $\bar{V}$ couples the columns of $\Theta$ together, and makes the structure of $K$  different from the covariance matrix.  

For the step of local law, recall our tasks (\ref{18080703}) and (\ref{18080704}). We take the estimate of the diagonal entries $G_{kk}$'s as an example. By Schur complement, one can write $G_{kk}$  in terms of a quadratic form $\mb{v}_k B^{(k)}\mb{v}_k'$; see (\ref{17091701}) for more details. Here $\mb{v}_k$ is the $k$-th row of $\Theta$ and it is independent of $B^{(k)}$. Hence, an estimate of $G_{kk}$ essentially boils down to a large deviation estimate of  the quadratic form of $\mb{v}_k$. It turns out that although a direct large deviation estimate is enough for  (\ref{18080703}), it is not sufficient for later use in the  Green function comparison. 
 With Hoeffding decomposition, we can write $\mb{v}_k B^{(k)}\mb{v}_k'$ as a linear combination of the linear part $\mb{u}_k B^{(k)}\mb{u}_k'$, crossing part  $\mb{u}_k B^{(k)}\bar{\mb{v}}_k'$ and the nonlinear part $\bar{\mb{v}}_k B^{(k)}\bar{\mb{v}}_k'$, where $\mb{u}_k$ and $\bar{\mb{v}}_k$ are the $k$-th rows of $U$ and $\bar{V}$, respectively. 
We establish the large deviation estimates  for three parts separately; see Propositions \ref{lem. large deviation linear part} and \ref{lem. large deviation nonlinear part}.  
It turns out that the large deviations of the last two parts are much sharper than the first part, although the sharpness for the crossing part can been seen only a posteriori.  The sharper large deviation estimates for the crossing part and nonlinear part  will be crucial in  Green function comparison.  The proof of Proposition \ref{lem. large deviation nonlinear part} will be the major task in this step. The matrices $U$ and $\bar{V}$ are only uncorrelated rather than independent, and so are  the entries within $\bar{V}$. To prove Proposition \ref{lem. large deviation nonlinear part}, we need to perform a martingale concentration argument. With these large deviation estimates, 
we then prove the local law, by pursuing the strategy in \cite{EYY} and \cite{PY}.  

For  Green function comparison (\ref{18080720}), we further decompose it into two steps. We call the first step as {\it decoupling}, and the second step as {\it first-order approximation}.  In the {\it decoupling } step, we compare $K=(U+\bar{V})(U+\bar{V})'$ with  $\wh{K}=(U+H)(U+H)'$, where $H=(h_{k,(ij)})$ is a $p\times M$ Gaussian matrix with i.i.d. $h_{k,(ij)}\sim N(0,\frac{1}{3M})$ and it is independent of $U$. This step allows us to decouple the dependent (although uncorrelated) pair $(U,\bar{V})$ by studying the independent pair $(U, H)$ instead. For the Green function comparison between $K$ and $\wh{K}$, we use a swapping strategy via replacing one row of $\bar{V}$ by that of $H$ at each time and compare the Green functions step by step. Such a replacement strategy has been previously used in \cite{PY}, and also \cite{PY1, BPZ1, BPZ}.  However, such a comparison involves high order moments of the quadratic forms of $\mathbf{v}_k$ and $\hat{\mathbf{v}}_k$, where $\hat{\mathbf{v}}_k$ represents the $k$-th row of $U+H$.  Roughly speaking, the comparison requires  the first three moments of $\mathbf{v}_k B\mathbf{v}_k'$ and $\hat{\mathbf{v}}_k B\hat{\mathbf{v}}_k'$ and their variants to match, up to sufficiently small errors. Here $B$ is certain matrix independent of both  $\mathbf{v}_k$ and $\hat{\mathbf{v}}_k$.  Although  the entries in $\bar{V}$ and those in $H$ have the same covariance structure, their higher order moments do not match. In addition, although the entries in $U$ and those in $\bar{V}$  are uncorrelated, they are dependent at high orders.   One key point in the comparison of the moments of $\mathbf{v}_k B\mathbf{v}_k'$ and those of $\hat{\mathbf{v}}_k B\hat{\mathbf{v}}_k'$ is to show that the high order correlation between the entries in $U$ and $\bar{V}$ is negligible. This fact heavily relies on the sharper large deviations for the crossing part and nonlinear part in Proposition \ref{lem. large deviation nonlinear part}.  In the {\it first-order approximation} step, we further compare $\wh{K}=(U+H)(U+H)'$ with the random matrix $\wt{K}$. In this step, we approximate all the terms with the matrix $H$ involved by the deterministic $\frac{1}{3}I_p$. The Green function comparison between $\wh{K}$ and $\wt{K}$ will be done with a continuous interpolation between two matrices.   Similar idea of  continuous interpolation  was previously used for the Green function comparison in \cite{LS1, LS}.

\subsection{Notation and organization}

We first need the following definition  from~\cite{EKY}. 

\begin{defn}\label{definition of stochastic domination}
Let $\mathsf{X}\equiv \mathsf{X}^{(n)}$ and $\mathsf{Y}\equiv \mathsf{Y}^{(n)}$ be two sequences of
 nonnegative random variables. We say that~$\mathcal{Y}$ stochastically dominates~$\mathsf{X}$ if, for all (small) $\epsilon>0$ and (large)~$D>0$,
\begin{align}
\P\big(\mathsf{X}^{(n)}>n^{\epsilon} \mathsf{Y}^{(n)}\big)\le n^{-D},
\end{align}
for sufficiently large $n\ge n_0(\epsilon,D)$, and we write $\mathsf{X} \prec \mathsf{Y}$ or $\mathsf{X}=O_\prec(\mathsf{Y})$.
 When
$\mathsf{X}^{(n)}$ and $\mathsf{Y}^{(n)}$ depend on a parameter $v\in \mathsf{V}$ (typically an index label or a spectral parameter), then $\mathsf{X}(v) \prec \mathsf{Y} (v)$, uniformly in $v\in \mathsf{V}$, means that the threshold $n_0(\epsilon,D)$ can be chosen independently of $v$. We also use the notation $\mathsf{X}^{(n)}\prec \mathsf{Y}^{(n)}$ if $\mathsf{X}^{(n)}\leq n^{\epsilon} \mathsf{Y}^{(n)}$ deterministically for any given (small) $\epsilon>0$. Finally, we say that an event $\mathcal{E}\equiv \mathcal{E}_n$ holds with high probability if: for any fixed $D>0$, there exists $n_0(D)>0$, such that for all $n\geq n_0(D)$ we have
\begin{align*}
\mathbb{P}(\mathcal{E})\geq 1-n^{-D}.
\end{align*} 
\end{defn}
In the case that the nonnegative random variable $\mathsf{X}$ satisfies the stochastic bound $\mathsf{X}\prec \mathsf{Y}$ and the deterministic bound $\mathsf{X}\leq N^k\mathsf{Y}$ for some nonnegative integer $k$ and nonnegative $Y$, we can easily conclude that $\mathbb{E}X^p\prec  \mathbb{E} Y^p$ for any given $p\geq 0$. We use the symbols $O(\,\cdot\,)$ and $o(\,\cdot\,)$ for the standard big-O and little-o notation.  We use~$C$ to denote strictly positive constant that does not depend on~$N$. Its value may change from line to line. For any matrix $A$, we denote by $\|A\|$ its operator norm, while for any vector $\mathbf{a}$, we use $\|\mathbf{a}\|$ to denote its $\ell^2$-norm. Further, we use $\|\mathbf{a}\|_\infty$ to represent the $\ell^\infty$-norm of a vector.  In addition, 
we use double brackets to denote index sets, \ie for $n_1, n_2\in\R$, $\llbracket n_1,n_2\rrbracket\deq [n_1, n_2] \cap\Z$. The notation $\mathbbm{1}(\cdot)$ will be used to denote the indicator function. We  also use $\mathbf{1}$ to represent the all-one vector, whose dimension may change from one to another.

The paper is organized as follows:  In Section \ref{s.simulation}, we will present a simulation study to show that the testing statistic of the largest eigenvalue of the Kendall rank correlation matrix has good performance in the independence test.  In Section \ref{s.large deviation}, we will state some large deviation estimates which will be used in the later sections. In Section \ref{s.local law} we will state a local law of $K$. In Section \ref{s. decoupling}, we will compare the Green functions of $K$ and $\wh{K}$, where the latter has independent linear and ``nonlinear" parts. In Section \ref{s.first order appro},  we further compare the Green functions of $\wh{K}$ and $\wt{K}$, where the latter is a shift of the linear part only.    Section \ref{s. universality for K} will be devoted to the final proof of  Theorem \ref{thm. universality for Kendall} and Corollary \ref{cor. TW for K}.  The proofs of the large deviation bounds, the local law, and some technical lemmas will be stated in the supplementary material \cite{BaoSupp}. In addition, we also present more simulation results in  \cite{BaoSupp}. 

\section{Application and simulation study}  \label{s.simulation}
In this section, we  apply the $\textrm{TW}_1$ law for $K$ to test the complete independence of the components of the random vector $\mathbf{w}=(w_1,\ldots,w_p)'$. We also compare the performance of our statistic, i.e., $\lambda_1(K)$, with some other statistics in the literature. From  the $n$ samples of $\mathbf{w}$, i.e. $\mathbf{w}_1, \ldots, \mathbf{w}_n$, we can define three types of correlation matrices: Pearson correlation matrix ($R$), Spearman rank correlation matrix ($S$), and Kendall rank correlation matrix (K). By definition, the matrix entries $R_{ij}$, $S_{ij}$ and $K_{ij}$ are the Pearson, Spearman and Kendall correlation coefficient between samples of $w_i$ and $w_j$, respectively.  Denote by $\lambda_1 (A)$ the largest eigenvalue of $A$, for $A=R,S$ and $K$.  We will  consider  $7$  statistics constructed from $R,S$ and $K$. They are defined as follows:

\begin{itemize}
\item[(i)] $\hspace*{5ex} \displaystyle T_1 = \frac{\text{Tr} R^2-a_R }{b_R}$ (see \cite{GHPY});  \vspace{1ex}
  \item[(ii)] $\hspace*{5ex} \displaystyle  T_2= \frac{\text{Tr} S^2-a_S}{b_S}$ (see \cite{BLPZ})  ;\vspace{1ex}
  \item[(iii)] $\hspace*{5ex} \displaystyle T_3 = n\Big(\max_{1\leq i < j \leq p} \left| R_{ij} \right|\Big)^2 -4\log n+ \log\log n$ (see \cite{Jiang});  \vspace{1ex}
    \item[(iv)] $\hspace*{5ex} \displaystyle T_4= n \Big(\max_{1\leq i < j \leq p} \left| \frac{p}{n}S_{ij} \right|\Big)^2 -4\log p+ \log\log p $ (see \cite{Zhou});  \vspace{1ex}
  \item[(v)] $\hspace*{5ex} \displaystyle T_5 =n^{\frac23}c_n^{\frac16} d_{+,c_n}^{-\frac23}(\lambda_1(R)-d_{+,c_n}) $ (see \cite{BPZ1,PY1} ); \vspace{1ex}
 \item[(vi)] $\hspace*{5ex} \displaystyle  T_6=n^{\frac23}c_n^{\frac16} d_{+,c_n}^{-\frac23}(\lambda_1(S)-d_{+,c_n})$ (see \cite{BaoSpearman});
   \vspace{1ex}
   \item[(vii)] $\hspace*{5ex} \displaystyle  T_7= \frac32n^{\frac23}c_n^{\frac16} d_{+,c_n}^{-\frac23}(\lambda_1(K)-\lambda_{+,c_n})$ (see Corollary \ref{cor. TW for K}),  
\end{itemize}
where the parameters $a_R,b_R,a_S$ and $b_S$ will be explained later.  We briefly describe the limiting distributions of the above statistics under the null hypothesis, i.e., $w_1, \ldots, w_p$ are independent.  The limiting null distributions of $T_1$ and $T_2$ are both $N(0,1)$.   The CLT for $T_1$ is derived in \cite{GHPY} under a four moment assumption, and that for $T_2$ is established in \cite{BLPZ} for arbitrary random vector with continuous distribution. We mention that both  \cite{GHPY} and \cite{BLPZ} give CLT of linear eigenvalue statistics for more general test functions. Here we choose the test function  $f(x)=x^2$ for simplicity. The explicit forms of the centering constants $a_R$ and $a_S$ and also those for the scaling constants $b_R$ and $b_S$ can be found in Theorem 3.1 of \cite{GHPY} and Theorem 1.1 of \cite{BLPZ}.
Under a moment condition $\mathbb{E}|w_i|^{30-\varepsilon}<\infty$ with some small constant $\varepsilon>0$, the limiting null distribution of $T_3$ is derived in \cite{Jiang}, and it admits the following c.d.f.:
$
F_{T_3}(x)=\exp(-(c^2\sqrt{8\pi})^{-1}e^{-y/2}).
$
Similarly, the limiting null distribution of $T_4$ (c.f. \cite{Zhou}) is given by 
$
F_{T_4}(x)= \exp(-(8\pi)^{-1/2}e^{-y/2}).
$
Since $T_4$ is nonparametric, the above limiting law does not require moment assumption.  The limiting null distributions of $T_5,T_6,T_7$ are all given by $\textrm{TW}_1$ law. In \cite{BPZ1, PY1}, the $\textrm{TW}_1$ law is established for $R$, assuming that $w_i$'s have sub-exponential tails. Again, since $T_6$ and $T_7$ are constructed from nonparametric matrices, their limiting laws do not require any moment assumption on  $w_i$'s. 

In the sequel, we denote by $\text{Cauchy}(0,1)$ the Cauchy distribution with location parameter $0$ and scale parameter $1$. We further denote by $t(4)$ the student's $t$-distribution with degrees of freedom $4$. We will consider three null hypotheses with the nominal significance level $\alpha=5\% $, for $N(0,1)$, $\text{Cauchy}(0,1)$ and $t(4)$ variables, respectively:
\begin{itemize}
\item $\mathrm{H}_{0,1}$:  $w_i$'s are i.i.d. $N(0,1)$ variables;

\item $\mathrm{H}_{0,2}$: $w_i$'s are i.i.d. $\text{Cauchy}(0,1)$ variables;

\item $\mathrm{H}_{0,3}$: $w_i$'s are i.i.d. $t(4)$ variables.
\end{itemize}

For each null hypothesis $\mathrm{H}_{0,i},i=1,2,3$, we consider two types of alternatives: (i) the alternative of one large disturbance, denoted by $\mathrm{H}_{a,i-1}$; (ii) the alternative of  many small disturbances, denoted by $\mathrm{H}_{a,i-2}$. Specifically, for some parameters $\delta\in (0,1]$ and $\tau_1,\tau_2,\tau_3>0$, we set
\begin{itemize}
\item $\mathrm{H}_{a,1-1}$: $\mathbf{w}\sim N_p(0, I_p+A)$, where $A=(a_{ij})_{p\times p}$ with $a_{ij}=0$ for all $i,j$ except for $a_{12}=a_{21}=\delta$.
\item $\mathrm{H}_{a,1-2}$: $\mathbf{w}\sim N_p(0, I_p+B)$, where $B=(b_{ij})_{p\times p}$ with $b_{ij}= \frac{\tau_1}{p}$ for all $i,j$.
\item $\mathrm{H}_{a,2-1}$: Let $\{x_{i}\}_{i=1}^p$ be i.i.d. $\text{Cauchy}(0,1)$. We set $w_1=x_1+\delta x_2$, $w_2=\delta x_1+x_2$ and $w_i=x_i$ for all $i\neq 1,2$.
\item $\mathrm{H}_{a,2-2}$: Let $\{x_{i}\}_{i=1}^p$ be i.i.d. $\text{Cauchy}(0,1)$. We set $w_i=x_i+\frac{\tau_2}{p}\sum_{j\neq i} x_j$ for all $i$. 
\item $\mathrm{H}_{a, 3-1}$: Let $\{x_{i}\}_{i=1}^p$ be i.i.d. $t(4)$. We set $w_1=x_1+\delta x_2$, $w_2=\delta x_1+x_2$ and $w_i=x_i$ for all $i\neq 1,2$.
\item $\mathrm{H}_{a, 3-2}$: Let $\{x_{i}\}_{i=1}^p$ be i.i.d. $t(4)$. We set $w_i=x_i+\frac{\tau_3}{p}\sum_{j\neq i} x_j$ for all $i$.
\end{itemize}

Here we give more explanation on the above two types of alternatives. Let us take the Gaussian case as an example.  Notice that  $A=\delta(\mathbf{e}_1\mathbf{e}_2^*+\mathbf{e}_2\mathbf{e}_1^*)$ is rank-two and $B=\frac{\tau_1}{p}\mathbf{1}\mathbf{1}'$ is rank-one, where $\mathbf{1}$ represents the all-one vector.  It is easy to see that the two non-zero eigenvalues of $A$ are $\delta$ and $-\delta$, while the nonzero eigenvalue of $B$ is $\tau_1$. Hence,  the population covariance matrix $I_p+A$ (resp. $I_p+B$) has a spike with strength  $1+\delta$ (resp. $1+\tau_1$).  Since the seminal work of Baik, Ben-Arous and P\'{e}ch\'{e} \cite{BBP}, it is now well-known that there is a phase transition called BBP-transition for the largest eigenvalue of the sample covariance  matrix when the population covariance matrix has a spike. Very  roughly speaking,  we can effectively detect the spike using the largest eigenvalue of the sample covariance matrix, only when the spike is larger than the threshold $1+\sqrt{\frac{p}{n}}$. Although here we are considering correlation type of matrices, simulation shows that there is a similar effect. 
Further, although there is no concept of population covariance matrix for $\text{Cauchy}(0,1)$ and $t(4)$ variables, the alternatives $\mathrm{H}_{a,i-1}$ and $\mathrm{H}_{a,i-2}$ for $i=2,3$ are constructed in a similar vein.

The results of sizes and powers stated in Table \ref{results} are obtained under the choices  $p=200, 400,560,800$ with the same $n=600$. The results are based on $1000$ replications. The parameters are chosen to be $\delta=1$, $\tau_1=\tau_3=\frac32$ and $\tau_2=\frac{1}{40}$.  We also refer to Tables \ref{results2} and \ref{results3} in the supplementary material \cite{BaoSupp} for the results under different choices of $p$ and $n$. In addition, we depict the powers for different choices of the parameters $\delta,\tau_1,\tau_2,\tau_3$ in Fig \ref{fig1}-\ref{fig6} in \cite{BaoSupp}, under the setting $(p,n)=(400,600)$.  

\begin{table}[h]
\renewcommand{\arraystretch}{0.95}
\begin{center}
 \begin{tabular}{ccccccccccccccccccc} \hline
$p$  &$T_1$&$T_2$&$T_3$&$T_4$&$T_5$&$T_6$&$T_7$&  &$T_2$ &$T_4$&$T_6$&$T_7$&&$T_2$ &$T_4$ &$T_6$&$T_7$\\ \hline\\
   &\multicolumn{7}{c}{$\mathrm{H}_{0,1}$}&  & \multicolumn{4}{c}{$\mathrm{H}_{0,2}$} & & \multicolumn{4}{c}{$\mathrm{H}_{0,3}$} \\ \cline{2-8} \cline{10-13} \cline{15-18}\\
 200  &3.7    &5.4    &2.6   &3.8   &0.5   &1.5   &1.6&   &4.6   &3.9    &1.5   &1.7&   &6     &3.2  &2.3   &2.3\\
  400 &2.2    &4.9    &2.9   &2.5   &1.8   &3.1   &3.6&   &5.1   &4.3    &2.3   &2.9&   &4.9  &3.2  &1.7   &2.5\\
 560  &1.6    &5.2    &2.9   &3.5   &1.8   &2.2   &2.8&   &4.2   &5       &1.6   &2.5&   &5.5  &5.2   &1.5  &1.8\\
 800  &1.3    &5.4    &4.2   &5      &1.7   &2.1   &2.4&   &5.5   &4.2    &2.1   &2.6&   &4     &3.4   &3.1  &3.6\\ \\
  &\multicolumn{7}{c}{$\mathrm{H}_{a,1-1}$}&  & \multicolumn{4}{c}{$\mathrm{H}_{a,2-1}$} & & \multicolumn{4}{c}{$\mathrm{H}_{a,3-1}$} \\ \cline{2-8} \cline{10-13} \cline{15-18} \\
 200  &87.3  &90.9  &100   &100  &99.4  &99.7 &100 &  &92      &100    &99.9   &100   &&90.6  &100  &99.6  &100\\
  400 &31     &44     &100   &100  &38.1  &40.9 &99.8&  &44.3   &100    &38.8   &99.8  &&42.2  &100  &36.2  &99.8 \\ \\ 
  &\multicolumn{7}{c}{$\mathrm{H}_{a,1-2}$}  && \multicolumn{4}{c}{$\mathrm{H}_{a,2-2}$} & & \multicolumn{4}{c}{$\mathrm{H}_{a,3-2}$} \\ \cline{2-8} \cline{10-13} \cline{15-18}\\
 200  &89.7    &99.5   &5    &5.7    &100  &100   &100&   &93.3   &8.5  &95.5   &95.7   &&100   &6.1   &100   &100\\
400 &39.7     &69.2   &3.5  &3.6   &99.3  &97.1  &97.6&   &97.2  &9.9  &98.7   &98.8   &&91.9  &4.2   &100  &100\\
 560  &18.5   &44.2   &3.3  &4.3   &89.7  &83.3  &84.9&   &98.5  &8.2  &99.5   &99.5   &&70     &5.3   &99.4  &99.5\\
 800  &8.1    &24.7   &3      &4.9   &55.6  &47     &49.1&  &99.6   &8.4  &100    &100    &&47     &3.7   &88.7  &89.7\\ \hline
  \end{tabular}
  \caption{
\footnotesize{The sizes and powers (percentage) of $T_1$ to $T_7$ under different hypotheses and dimension $p$. Here we chose sample size $n=600$, $\delta=1$, $\tau_1=\tau_3=\frac32$ and $\tau_2=\frac{1}{40}$.   \label{results}}
}
\end{center}
\end{table}

Since  $T_1$, $T_3$ and $T_5$ are parametric and the limiting theorems of them in \cite{GHPY, Jiang, BPZ1, PY1} do not apply to the $\text{Cauchy}(0,1)$  and $t(4)$ variables, we omit the simulation results from the tables in these cases. Observe that for the first type of alternatives $\mathrm{H}_{a,i-1}$ for $i=1, 2, 3$, we only consider the case when  $p$ is sufficiently smaller than $n$.  We take $\mathrm{H}_{a,1-1}$ to explain such a choice. In $\mathrm{H}_{a,1-1}$, we consider a Gaussian vector with a population covariance matrix $I_p+A$. On one hand, $\delta$ has to be no larger than $1$ to guarantee the non-negative definiteness of $I_p+A$.  On the other hand, as we mentioned previously, heuristically, due to the BBP transition, one needs $\delta>\sqrt{\frac{p}{n}}$ to get effective information about the existence of $\delta$ from the largest eigenvalue of the sample covariance matrix. Hence, in case that $p$ is close to or larger than $n$, our spike $1+\delta$ would not be large enough to be detected. Simulation shows that a similar effect exists for all three types of correlation matrices considered here.   So we omit the simulation results in those regimes where all the largest eigenvalue statistics will essentially fail. 

Below we summarize our findings from the simulation study.

(1) From Table \ref{results}, and also Table \ref{results2} and Table \ref{results3} in the supplementary material \cite{BaoSupp}, we see that the sizes of $T_2$ are close to the nominal size $5\%$. The sizes of all the other statistics  tend to be smaller than $5\%$. However, for the statistics of the largest eigenvalue  $T_5,T_6$ and $T_7$,  it is possible to modify the centering and scaling constants for the largest eigenvalues to improve the convergence rate of the weak convergence to the $\mathrm{TW}_1$ law such that better sizes can be achieved. Some important works have been done along this line, but only for Gaussian ensembles; see \cite{Karoui1, JM, Ma}. The extension of the results in \cite{Karoui1, JM, Ma} to other random matrix ensembles is still an open question. We do not pursue this direction in the current paper.

(2) From Table \ref{results}, and also Table \ref{results2} and Table \ref{results3} in the supplementary material \cite{BaoSupp}, we see that the statistics of the largest off-diagonal entry, i.e. $T_3, T_4$,  outperform the other statistics in the case of one large disturbance ( $\mathrm{H}_{a, i-1}, i=1,2,3 $). However, $T_3, T_4$ perform quite poorly in the case of many small disturbances ($\mathrm{H}_{a, i-2}, i=1,2,3 $). In general, the other statistics perform well in both types of alternatives. In addition, $T_7$ outperforms the others in most of the cases. For all statistics, the performance deteriorates when $\frac{p}{n}$ increases.  That can be again understood as an effect of the BBP transition. We also refer to Fig \ref{fig1}-\ref{fig6}  in \cite{BaoSupp} for more information about the powers for  different choices of the parameters. 

(3) In the Supplementary material \cite{BaoSupp}, we also consider another type of alternative hypothesis,  denoted by $\mathrm{H}_{a,4}$. For this alternative hypothesis, we consider a random vector $\mathbf{w}$ which has uncorrelated but dependent components. We refer to \cite{BaoSupp} for the detailed definition. The simulation results are stated in Table \ref{results4}. One can see that $T_4$ and $T_7$ outperform the other statistics in general. 

Overall, our statistic $T_7$ has the following advantages. First, it is nonparametric and thus can be used for the heavy-tailed variables, for which $T_1$, $T_3$ and $T_5$ cannot be applied. Second, among all nonparametric statistics  $T_2$, $T_4$, $T_6$ and $T_7$, only $T_2$ performs better than $T_7$ for  the first type of alternatives, but $T_2$ completely fails for  the second type of alternatives.  In a nutshell, $T_7$ is the most robust among all $7$ statistics 
for the cases considered in this simulation study.

\section{Hoeffding decomposition and large deviation} \label{s.large deviation} In this section, we state some key  large deviation estimates; see Propositions \ref{lem. large deviation linear part} and \ref{lem. large deviation nonlinear part}.  We start with (a variant of) Hoeffding decomposition for $v_{k,(ij)}$'s. 
\subsection{Hoeffding decomposition} Let
\begin{align}
v_{k,(i\cdot)}:=\mathbb{E}\big( \text{sign}(w_{ki}-w_{kj})|w_{ki}\big), \quad v_{k,(\cdot j)}:=\mathbb{E}\big( \text{sign}(w_{ki}-w_{kj})|w_{kj}\big).  \label{170920100}
\end{align} 
Observe that 
$
v_{k,(\cdot i)}=-v_{k, (i \cdot)}.
$
The following decomposition is (a variant of) Hoeffding decomposition 
\begin{align}
v_{k,(ij)}=v_{k,(i\cdot)}-v_{k, (j\cdot)}+\bar{v}_{k,(ij)}, \label{17101820}
\end{align}
where we take (\ref{17101820}) as the definition of $\bar{v}_{k,(ij)}$. 
 It is easy to check that the three parts in the RHS  are  pairwise uncorrelated. In addition, all of the three parts in the RHS of  (\ref{17101820}) are with mean 0 and variance $\frac13$, i.e.,
 \begin{align}
 \mathbb{E}v_{k,(i\cdot)}=\mathbb{E} v_{k, (j\cdot)}=\mathbb{E} \bar{v}_{k,(ij)}=0, \quad    \mathbb{E}v_{k,(i\cdot)}^2=\mathbb{E} v_{k, (j\cdot)}^2=\mathbb{E} \bar{v}_{k,(ij)}^2=\frac13.  \label{171128201}
 \end{align}  
  For brevity, we further introduce the notation 
\begin{align}
u_{k,(ij)}:= v_{k,(i\cdot)}-v_{k, (j\cdot)}.  \label{171128200}
\end{align}
Hence, we can also write $v_{k,(ij)}=u_{k,(ij)}+\bar{v}_{k,(ij)}$.

For a fixed $k\in \llbracket 1, p\rrbracket$, let $F_k$ be the common distribution of all $w_{ki}, i\in \llbracket 1, n\rrbracket$. We see that 
\begin{align}
v_{k,(i\cdot)}=\mathbb{E}(\mathbbm{1}(w_{kj}\leq w_{ki})|w_{ki})-\mathbb{E}(\mathbbm{1}(w_{kj}> w_{ki})|w_{ki})=2F_k(w_{ki})-1,  \label{17092101}
\end{align}
which is uniformly distributed on $[-1,1]$.  Hence, all $v_{k, (i\cdot)}, (k,i)\in \llbracket 1, p\rrbracket \times \llbracket 1, n\rrbracket$ are i.i.d., uniform random variables on $[-1,1]$, in light of (\ref{17092101}) and the independence of  $w_{ki}$'s. We will  call $v_{k,(i\cdot)}$ and $v_{k,(j\cdot)}$ (or together $u_{k,(ij)}$) the {\it linear} parts of $v_{k,(ij)}$, and  call $\bar{v}_{k,(ij)}$ the {\it nonlinear} part. Although the linear parts in all $v_{k,(ij)}$'s have a simple dependence structure due to the independence between $v_{k,(i\cdot)}$'s, the nonlinear parts couple $v_{k,(ij)}$'s together with certain nontrivial dependence relation.  For instance, $v_{k,(ij)}$ and $v_{k,(i\ell)}$ are correlated even when $j\neq \ell$. More specifically, it is elementary to check  
\begin{align}
\mathbb{E}v_{k,(ij)}v_{k,(i\ell)}= \mathbb{E}(v_{k,(i\cdot)})^2=\frac13. \label{17092150}
\end{align}

In the sequel, we will often separate the nonlinear part from the linear part. To this end, we introduce the following notations.  We set the $M$-dimensional row vector 
\begin{align}
\mb{v}_k:= \frac{1}{\sqrt{M}} (v_{k,(ij)})_{i<j}\equiv \frac{1}{\sqrt{M}}\big(v_{k,(12)},\ldots, v_{k, (1n)}, v_{k,(23)},\ldots, v_{k,(2n)}  \ldots,  v_{k,(n-1,n)}\big). \label{17091801}
\end{align}
Further, we set 
\begin{align}
\mb{u}_k:= \frac{1}{\sqrt{M}}(u_{k,(ij)})_{i<j},\qquad 
\bar{\mb{v}}_k:=\frac{1}{\sqrt{M}}(\bar{v}_{k,(ij)})_{i<j}. \label{17101010}
\end{align}
With the above notations, we can write 
\begin{align}
\mb{v}_k=\mb{u}_k+\bar{\mb{v}}_k,\qquad k\in \llbracket 1, p\rrbracket.   \label{171128230}
\end{align}
Note that under the null hypothesis, i.e., the components of the population vector $\mb{w}$ are independent, the random vectors $\mb{v}_1, \ldots, \mb{v}_p$ are also independent. But the components in $\mb{v}_k$ are dependent, as mentioned above (c.f. (\ref{17092150})).  We also notice that  $\mb{v}_i$ is the $i$-th row of $\Theta$ defined in (\ref{def of Theta}).  For the columns of $\Theta$, i.e.,  $\bs{\theta}_{(ij)}$'s in (\ref{17120201}), we also introduce the notations 
\begin{align*}
\bs{\theta}_{(i\cdot)}:=  \frac{1}{\sqrt{M}} (v_{1,(i\cdot)}, \ldots, v_{p,(i\cdot)})', \qquad \bar{\bs{\theta}}_{(ij)}:= \frac{1}{\sqrt{M}} (\bar{v}_{1,(ij)}, \ldots, \bar{v}_{p,(ij)})'.
\end{align*} 
Hence, we have the decomposition  for columns
\begin{align}
\bs{\theta}_{(ij)}= \bs{\theta}_{(i\cdot)}-\bs{\theta}_{(j\cdot)}+\bar{\bs{\theta}}_{(ij)}.  \label{17120205}
\end{align}
Further
note that the nonzero eigenvalues of the  matrix $K$ are the same as those of  the following  $M\times M$ matrix
\begin{align}
\mathcal{K}:=\sum_{i=1}^p \mb{v}_k'\mb{v}_k=\Theta'\Theta.  \label{17113001}
\end{align}

\subsection{Large deviation estimates for $\mathbf{v}_k$}
Set the $M\times M$ symmetric matrix
\begin{align}
\Gamma=( \chi_{(ij)(st)} )_{i<j,s<t}, \label{17102201}
\end{align} 
where $(ij)$ is the row index and $(st)$ is the column index and
\begin{align*}
\chi_{(ij)(st)}:=\frac13 \big( \delta_{is}+\delta_{jt}- \delta_{it}-\delta_{js}\big). 
\end{align*}
It is elementary to check that 
\begin{align}
\Gamma^2=\frac{n}{3}\Gamma. \label{17092601}
\end{align}
Consequently, we have  the fact 
\begin{align}
\|\Gamma\|=O(n).  \label{17110901}
\end{align}
We further set the $n\times M$ matrix 
\begin{align}
T=(t_{\ell,(ij)})_{\ell, i<j}, \qquad  t_{\ell,(ij)}:=\delta_{\ell i}-\delta_{\ell j}, \quad 1\leq \ell\leq n,  1\leq i<j\leq n,  \label{17112630}
\end{align} 
where $\ell$ is the row index and $(ij)$ is the column index. 
It is easy to check 
\begin{align}
\Gamma=\frac13T'T. \label{17120301}
\end{align}

The first proposition is on the large deviation estimates for some linear and quadratic forms of $\mb{u}_k$. 
\begin{pro} \label{lem. large deviation linear part}Let $\mb{u}_k$ be defined as in (\ref{17101010}). Let $\mathbf{a}=(a_{(ij)})_{i<j}\in \mathbb{C}^M$ be any deterministic vector, and let $B:=(b_{(ij),(s t)})_{{i<j, s<t}}\in \mathbb{C}^{M\times M}$ be any deterministic matrix. We have
\begin{align}
&\mathbb{E} \mb{u}_k B\mb{u}_k'=\frac{1}{M}\mathrm{Tr} B\Gamma, \label{17092110}\\
&\big|\mb{u}_k\mathbf{a}'\big|
\prec \sqrt{\frac{\mathbf{a}\Gamma\mathbf{a}^*}{M}}\prec \sqrt{\frac{\|\mb{a}\|^2}{n}}, \label{17092170}\\
&\Big|\mb{u}_k B\mb{u}_k'-\frac{1}{M}\mathrm{Tr} B\Gamma\Big|\prec  \sqrt{\frac{\mathrm{Tr} |B\Gamma|^2}{M^2}}.\label{17092130}
\end{align}
\end{pro}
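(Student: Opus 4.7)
The central reduction is the linearization $\mathbf{u}_k = \frac{1}{\sqrt{M}}\mathbf{y}_k T$, where $\mathbf{y}_k:=(v_{k,(1\cdot)},\ldots,v_{k,(n\cdot)})$ and $T$ is the matrix in (\ref{17112630}); this is immediate from $u_{k,(ij)}=v_{k,(i\cdot)}-v_{k,(j\cdot)}=\sum_\ell v_{k,(\ell\cdot)}t_{\ell,(ij)}$. By (\ref{17092101}), the coordinates of $\mathbf{y}_k$ are i.i.d.\ Uniform$[-1,1]$: independent, bounded, centered with variance $1/3$, hence subgaussian with an $O(1)$ parameter. Once this reduction is in hand, the proposition reduces to elementary moment identities and standard concentration inequalities for linear and quadratic forms in independent subgaussian vectors.

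For (\ref{17092110}), expanding $\mathbf{u}_k B\mathbf{u}_k'=\frac{1}{M}\mathbf{y}_k(TBT')\mathbf{y}_k'$ and using $\mathbb{E} v_{k,(\ell\cdot)}v_{k,(\ell'\cdot)}=\frac{1}{3}\delta_{\ell\ell'}$ gives $\mathbb{E}\mathbf{u}_k B\mathbf{u}_k'=\frac{1}{3M}\mathrm{Tr}(TBT')=\frac{1}{3M}\mathrm{Tr}(BT'T)=\frac{1}{M}\mathrm{Tr}(B\Gamma)$ via (\ref{17120301}). For the linear bound (\ref{17092170}), I view $\mathbf{u}_k\mathbf{a}'=\frac{1}{\sqrt{M}}\mathbf{y}_k(T\mathbf{a}')$ as a linear form in $n$ bounded independent variables, so a Hoeffding-type inequality (or high-moment Markov after splitting into real and imaginary parts) yields $|\mathbf{u}_k\mathbf{a}'|\prec\frac{1}{\sqrt{M}}\|T\mathbf{a}'\|=\sqrt{\frac{3\mathbf{a}\Gamma\mathbf{a}^*}{M}}$ using $T'T=3\Gamma$. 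The second inequality in (\ref{17092170}) follows from $\mathbf{a}\Gamma\mathbf{a}^*\leq\|\Gamma\|\,\|\mathbf{a}\|^2=O(n)\|\mathbf{a}\|^2$ by (\ref{17110901}) together with $M\asymp n^2$.

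For the quadratic estimate (\ref{17092130}), I apply the Hanson--Wright inequality to $\mathbf{y}_k(TBT')\mathbf{y}_k'$, after decomposing $TBT'$ into real and imaginary Hermitian parts. Since the coordinates of $\mathbf{y}_k$ are bounded subgaussian, this yields $|\mathbf{u}_k B\mathbf{u}_k'-\mathbb{E}\mathbf{u}_k B\mathbf{u}_k'|\prec \frac{1}{M}\|TBT'\|_{\mathrm{HS}}$, the operator-norm term being absorbed because $\|A\|\leq\|A\|_{\mathrm{HS}}$ for any square $A$. What remains is the bound $\|TBT'\|_{\mathrm{HS}}\leq 3\sqrt{\mathrm{Tr}|B\Gamma|^2}$. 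Setting $P:=\frac{3}{n}\Gamma$, which is an orthogonal projection by (\ref{17092601}), a direct calculation gives $\|TBT'\|_{\mathrm{HS}}^2=9\,\mathrm{Tr}(\Gamma B\Gamma B^*)=n^2\,\|PBP\|_{\mathrm{HS}}^2$, while $\mathrm{Tr}|B\Gamma|^2=\mathrm{Tr}(\Gamma B^*B\Gamma)=\frac{n^2}{9}\|BP\|_{\mathrm{HS}}^2$. The inequality $\|PBP\|_{\mathrm{HS}}\leq\|BP\|_{\mathrm{HS}}$ then follows from $\|P\|_{\mathrm{op}}=1$, completing the bound.

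The argument is routine; the only mildly delicate point is this last trace-norm reconciliation, which converts the variance $\mathrm{Tr}(\Gamma B\Gamma B^*)$ naturally supplied by Hanson--Wright into the sharper $\mathrm{Tr}|B\Gamma|^2$ claimed in (\ref{17092130}), and uses crucially that $\Gamma$ is (a scalar multiple of) an orthogonal projection.
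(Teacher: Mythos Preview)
Your proof is correct and follows essentially the same route as the paper: both linearize via $\mathbf{u}_k=\mathbf{v}_{k,\Cdot[2]}T$ (your $\frac{1}{\sqrt{M}}\mathbf{y}_kT$), then appeal to standard large-deviation estimates for linear and quadratic forms in i.i.d.\ bounded variables applied to the matrix $TBT'$. The paper simply cites ``Corollary~B.3 of~\cite{EYY12}'' and the operator-norm bound $\|\Gamma\|=O(n)$ to conclude, whereas you spell out the final trace reconciliation $\|TBT'\|_{\mathrm{HS}}^2=9\,\mathrm{Tr}(\Gamma B\Gamma B^*)\le 9\,\mathrm{Tr}|B\Gamma|^2$ via the projection $P=\frac{3}{n}\Gamma$; this is exactly the detail the paper leaves implicit.
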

The second proposition is about the large deviation estimates for some linear and quadratic forms of $\bar{\mb{v}}_k$ and the crossing quadratic forms of $\bar{\mb{v}}_k$ and $\mb{u}_k$. 
\begin{pro} \label{lem. large deviation nonlinear part} Let $\mb{u}_k$ and $\bar{\mb{v}}_k$ be as defined  in (\ref{17101010}). Let $\mathbf{a}=(a_{(ij)})_{i<j}\in \mathbb{C}^M$ be any deterministic vector, and let $B:=(b_{(ij),(st)})_{i<j,s<t}\in \mathbb{C}^{M\times M}$ be any  deterministic matrix. We have 
\begin{align}
&\big|\bar{\mb{v}}_k\mathbf{a}'\big|\prec \sqrt{\frac{\|\mb{a}\|^2}{M}}, \label{17101050}\\
&\Big|\mb{u}_k B\bar{\mb{v}}_k'\Big|\prec \sqrt{\frac{n}{M^2} \mr{Tr} |B|^2}+\sqrt{\frac{1}{M^2}\sum_{\ell=1}^n \Big|\sum_{j=\ell+1}^n (TB)_{j,(\ell j)}\Big|^2},   \label{17101051}\\
& \Big|\bar{\mb{v}}_k B\bar{\mb{v}}_k'-\frac{1}{3M}\mathrm{Tr} B\Big|\prec \sqrt{\frac{n}{M^2} \mr{Tr} |B|^2}. \label{17101052}
\end{align}
\end{pro}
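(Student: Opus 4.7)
My strategy for all three bounds is a martingale decomposition along the filtration $\mathcal F_{\ell}:=\sigma(w_{k1},\ldots,w_{k\ell})$ for $\ell=0,1,\ldots,n$, together with the Burkholder--Davis--Gundy (BDG) inequality applied at a high even moment $2q$ and Markov's inequality to produce the stochastic-domination bound. The decisive input is Lemma \ref{lem.technical lem on Hoeffding}: the degeneracy (\ref{17111970}) forces $\bar v_{k,(ij)}$ to influence the martingale increment $D_{\ell}=\mathbb E[Z|\mathcal F_{\ell}]-\mathbb E[Z|\mathcal F_{\ell-1}]$ only at $\ell=\max(i,j)$; the conditional variance identity (\ref{17102801}) feeds the bracket process with clean constants; and (\ref{171128153}) provides the only nonvanishing cross-moment between the linear entries $v_{k,(i\cdot)}$ and the nonlinear $\bar v_{k,(ij)}$, which will turn out to be the source of the second summand in (\ref{17101051}).

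The estimate (\ref{17101050}) is the cleanest illustration. Writing $Z=\bar{\mathbf v}_k\mathbf a'$, the degeneracy observation gives $D_{\ell}=\frac{1}{\sqrt M}\sum_{i<\ell}\bar v_{k,(i\ell)}\,a_{(i\ell)}$, whose conditional second moment is at most $\frac{1}{3M}\sum_{i<\ell}|a_{(i\ell)}|^2$ by (\ref{17102801}); summing over $\ell$ bounds the bracket by $\|\mathbf a\|^2/(3M)$, and the uniform bound $|\bar v_{k,(ij)}|\le 2$ tames the higher-order increments in BDG, whence (\ref{17101050}) follows. The same template handles (\ref{17101052}): with $Z=\bar{\mathbf v}_kB\bar{\mathbf v}_k'-\frac{1}{3M}\mr{Tr}B$, the increment $D_{\ell}$ decomposes into a centered diagonal piece, a ``new-new'' bilinear piece in the freshly revealed row $\{\bar v_{k,(i\ell)}\}_{i<\ell}$, and a ``new-old'' bilinear piece coupling this row with the $\mathcal F_{\ell-1}$-measurable $\bar v_{k,(st)}$'s with $\max(s,t)<\ell$. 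Evaluating $\sum_{\ell}\mathbb E[|D_{\ell}|^2|\mathcal F_{\ell-1}]$ and repeatedly applying (\ref{17102801}) yields the target $\frac{n}{M^2}\mr{Tr}|B|^2$; the factor $n$ reflects that each entry of $B$ indexed by a pair $(st)$ shares a common index with $O(n)$ other pairs.

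The crossing bound (\ref{17101051}) is the principal obstacle. Using $u_{k,(ij)}=v_{k,(i\cdot)}-v_{k,(j\cdot)}$ together with the matrix $T$ from (\ref{17112630}), rewrite
\begin{align*}
\mathbf u_kB\bar{\mathbf v}_k'=\frac{1}{M}\sum_{s<t}\bigg(\sum_{\ell=1}^n v_{k,(\ell\cdot)}(TB)_{\ell,(st)}\bigg)\bar v_{k,(st)}.
\end{align*}
Two distinct mechanisms contribute to $D_{\ell'}$. The generic mechanism collects all terms in which either $\bar v_{k,(s\ell')}$ is newly revealed (so $t=\ell'$) or the factor $v_{k,(\ell'\cdot)}$ is newly revealed and paired with an $\mathcal F_{\ell'-1}$-measurable $\bar v_{k,(st)}$; using (\ref{17102801}) in concert with $T'T=3\Gamma$ from (\ref{17120301}) and $\|\Gamma\|=O(n)$ from (\ref{17110901}), one checks that this collection contributes a bracket bounded by $n\,\mr{Tr}|B|^2/M^2$, which produces the first summand of (\ref{17101051}). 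The delicate mechanism comes from the terms with $\ell=t>s$ at the step $\ell'=s$: by the anti-symmetry $\bar v_{k,(ts)}=-\bar v_{k,(st)}$ and the identity (\ref{171128153}), one has the deterministic conditional expectation $\mathbb E[v_{k,(t\cdot)}\bar v_{k,(st)}|w_{ks}]=\tfrac12(v_{k,(s\cdot)}^2-\tfrac13)$, so that the martingale difference $D_{s}$ inherits the extra contribution
\begin{align*}
\frac{1}{2M}\big(v_{k,(s\cdot)}^2-\tfrac13\big)\sum_{t>s}(TB)_{t,(st)},
\end{align*}
whose conditional variance, summed over $s=\ell'$, is exactly a constant multiple of $M^{-2}\sum_{\ell}\big|\sum_{j=\ell+1}^n(TB)_{j,(\ell j)}\big|^2$ and reproduces the second summand of (\ref{17101051}). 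The main technical difficulty is the careful bookkeeping showing that these two mechanisms do not double-count and that the bracket process of the crossing form splits additively into the two quantities appearing in (\ref{17101051}); once this is in place, BDG and Markov close the argument in the standard way.
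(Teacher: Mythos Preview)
Your proposal is essentially the paper's own argument: the same filtration $\mathcal F_\ell$, the same identification of the increments via the degeneracy (\ref{17111970}), the same use of (\ref{171128153}) to isolate the second summand in (\ref{17101051}), and closure via Burkholder plus Markov. One organizational difference shows up in (\ref{17101052}): the paper first partitions the quadratic form by index-overlap structure into four pieces $Z_1$ (diagonal), $Z_2$ (shared first index), $Z_3$ (shared second index), $Z_4$ (disjoint), and for $Z_2,Z_3$ it runs a martingale \emph{after} conditioning on the shared index. This sidesteps a subtlety your direct single-martingale route has to confront: $\mathbb E[\bar v_{k,(ij)}\bar v_{k,(sj)}\mid w_{ki},w_{ks}]$ is in general nonzero, so $Z_3$-type pairs with a common \emph{future} second index $j>\ell$ still feed into $D_\ell$ at the step where the later of $i,s$ is revealed; your stated decomposition into ``diagonal + new-new + new-old'' omits this piece. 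It does not upset the final bound---a short Cauchy--Schwarz shows this extra contribution is also controlled by $n\,\mr{Tr}|B|^2/M^2$---but you should account for it when writing out the details.
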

We further set
\begin{align}
\wt{\Gamma}= \Gamma+\frac13I_M. \label{17113050}
\end{align}
From Propositions \ref{lem. large deviation linear part} and \ref{lem. large deviation nonlinear part}, we can easily get the following corollary. 
\begin{cor} \label{cor. large devi} Let $\mb{v}_k$ be as defined in (\ref{17091801}). Let $\mathbf{a}=(a_{(ij)})_{i<j}\in \mathbb{C}^M$ be any  deterministic vector, and let $B:=(b_{(ij),(st)})_{i<j,s<t}\in \mathbb{C}^{M\times M}$ be any  deterministic matrix. We have 
\begin{align}
&\big|\mb{v}_k\mathbf{a}'\big|
\prec \sqrt{\frac{\mathbf{a}\Gamma\mathbf{a}'}{M}}\prec \sqrt{\frac{\|\mb{a}\|^2}{n}},  \label{17113070}\\
&\Big|\mb{v}_k B\mb{v}_k'-\frac{1}{M}\mathrm{Tr} B\wt{\Gamma}\Big|\prec  \sqrt{\frac{\mathrm{Tr} |B|^2}{M}}.\label{17113010}
\end{align}
\end{cor}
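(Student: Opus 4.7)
The plan is to substitute the Hoeffding decomposition $\mathbf{v}_k = \mathbf{u}_k + \bar{\mathbf{v}}_k$ from (\ref{171128230}) into both estimates and assemble the two propositions piece by piece. For the linear form (\ref{17113070}), I would simply write $|\mathbf{v}_k\mathbf{a}'| \leq |\mathbf{u}_k\mathbf{a}'| + |\bar{\mathbf{v}}_k\mathbf{a}'|$. By (\ref{17092170}) the first piece is $\prec \sqrt{\mathbf{a}\Gamma\mathbf{a}^*/M} \prec \sqrt{\|\mathbf{a}\|^2/n}$, and by (\ref{17101050}) the second is $\prec \sqrt{\|\mathbf{a}\|^2/M}$, which is itself $\leq \sqrt{\|\mathbf{a}\|^2/n}$ because $M = n(n-1)/2 \geq n$. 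Both inequalities in the stated chain follow immediately (the second nonlinear contribution is in fact of smaller order than the linear one).

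For the quadratic form (\ref{17113010}), I would expand
\[
\mathbf{v}_k B \mathbf{v}_k' = \mathbf{u}_k B \mathbf{u}_k' + \mathbf{u}_k B \bar{\mathbf{v}}_k' + \bar{\mathbf{v}}_k B \mathbf{u}_k' + \bar{\mathbf{v}}_k B \bar{\mathbf{v}}_k'
\]
and observe that the deterministic centerings identified in the two propositions match up: by (\ref{17092110}) the $\mathbf{u}_k B \mathbf{u}_k'$ term contributes a mean $\frac{1}{M}\mathrm{Tr}\,B\Gamma$, by (\ref{17101052}) the $\bar{\mathbf{v}}_k B \bar{\mathbf{v}}_k'$ term contributes $\frac{1}{3M}\mathrm{Tr}\,B$, and the crossing terms have zero mean. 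These sum to $\frac{1}{M}\mathrm{Tr}\,B(\Gamma + \tfrac{1}{3}I_M) = \frac{1}{M}\mathrm{Tr}\,B\wt{\Gamma}$, which is exactly the target centering. It then suffices to bound each of the four stochastic deviations by $\sqrt{\mathrm{Tr}|B|^2/M}$. The diagonal terms are easy: the remark after (\ref{17092130}) handles $|\mathbf{u}_k B \mathbf{u}_k' - \frac{1}{M}\mathrm{Tr}\,B\Gamma|$ directly, while (\ref{17101052}) gives $|\bar{\mathbf{v}}_k B \bar{\mathbf{v}}_k' - \frac{1}{3M}\mathrm{Tr}\,B| \prec \sqrt{n\,\mathrm{Tr}|B|^2/M^2} \leq \sqrt{\mathrm{Tr}|B|^2/M}$ since $n \leq M$.

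The only place that needs real care is the crossing bound (\ref{17101051}), which carries the extra term $\sqrt{M^{-2}\sum_{\ell=1}^n |\sum_{j>\ell}(TB)_{j,(\ell j)}|^2}$. To absorb it into $\sqrt{\mathrm{Tr}|B|^2/M}$, I would apply Cauchy--Schwarz on the inner sum, $\big|\sum_{j>\ell}(TB)_{j,(\ell j)}\big|^2 \leq n\sum_{j>\ell}|(TB)_{j,(\ell j)}|^2$, so that the whole double sum is at most $n\|TB\|_F^2$; then (\ref{17120301}) gives $T'T = 3\Gamma$, and together with $\|\Gamma\| = O(n)$ from (\ref{17110901}) we get $\|TB\|_F^2 = \mathrm{Tr}(B^*T'TB) \leq 3\|\Gamma\|\,\mathrm{Tr}|B|^2 \leq Cn\,\mathrm{Tr}|B|^2$. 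Thus the extra term is $\leq Cn^2\mathrm{Tr}|B|^2/M^2 \asymp \mathrm{Tr}|B|^2/M$, as required. The only genuine obstacle is this bookkeeping for the crossing form; the relation $T'T = 3\Gamma$ is what makes the auxiliary term in (\ref{17101051}) collapse to the same $\sqrt{\mathrm{Tr}|B|^2/M}$ scale as the other contributions.
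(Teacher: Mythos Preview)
Your proposal is correct and follows essentially the same route as the paper: decompose via $\mathbf{v}_k=\mathbf{u}_k+\bar{\mathbf{v}}_k$, apply Propositions~\ref{lem. large deviation linear part} and~\ref{lem. large deviation nonlinear part} termwise, and control the auxiliary sum in (\ref{17101051}) by the Cauchy--Schwarz/$T'T=3\Gamma$ computation, which is exactly the paper's inequality (\ref{17113040}). The only cosmetic difference is that you spell out the four-term expansion for the quadratic form and the matching of centerings, whereas the paper compresses this into a single sentence.
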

The proofs of Propositions \ref{lem. large deviation linear part} and \ref{lem. large deviation nonlinear part} and also the proof of Corollary \ref{cor. large devi} are stated in the supplementary material \cite{BaoSupp}. 

\section{Strong local law for $K$} \label{s.local law}
{ In this section, we state a strong local law for the matrix $K$; see Proposition \ref{lem.weak law for R}. The  proof of Proposition \ref{lem.weak law for R} is 
stated in the supplementary material \cite{BaoSupp} and it heavily relies on the large deviation bounds in Corollary \ref{cor. large devi}.  To state the results, we need more notations. Recall the matrices $K$ and $\mc{K}$ defined in (\ref{17101901}) and (\ref{17113001}). 
We  denote the Green functions of $K$ and $\mathcal{K}$ by
\begin{align*}
G(z):=(K-z)^{-1}, \qquad \mathcal{G}(z):=(\mathcal{K}-z)^{-1}.  
\end{align*} 
Then, we further denote the Stieltjes transform of $K$ by 
\begin{align*}
{m} (z):=\frac{1}{p} \mr{Tr} G(z)=\frac{1}{p} \sum_{i=1}^p G_{ii}(z). 
\end{align*}

 For any $z=E+\ii\eta\in \mathbb{C}^+$, we  set the function $\underline{m} (z):\mathbb{C}^+\to \mathbb{C}^+$ as the solution to the equation 
\begin{align}
\frac23 c_n(z-\frac13)(\underline{m} (z))^2+(z-1+\frac23 c_n)\underline{m} (z)+1=0. \label{17113014}
\end{align}
It is elementary to check that $\underline{m}$ is the Stieltjes transform of $F^K_{c_n}$ (c.f. Theorem \ref{thm. global law of K}). Some properties of the function $\underline{m}$ are given in Lemma \ref{lem.properties of m}.

We then introduce the following notations
\begin{align}
&{\Lambda}_{\rm d}\equiv \Lambda_{\rm d}(z):=\max_{k}|G_{kk}(z)-\underline{m}(z)|, \qquad {\Lambda}_{\rm o}\equiv {\Lambda}_{\rm o}(z):=\max_{k\neq \ell} |G_{k\ell}(z)|, \nonumber\\ &{\Lambda}\equiv {\Lambda}(z):=|{m}(z) -\underline{m}(z)|.  \label{17102910}
\end{align}
In the sequel, we work in  the following  domain of $z$
\begin{align}
{\mathcal{D}}(\epsilon):=\big\{z=E+\ii \eta: \frac{1}{2}{\lambda}_{+,c}\leq E\leq 2{\lambda}_{+,c}, n^{-1+\epsilon}\leq \eta\leq 1 \big\}, \label{18080701}
\end{align}
where  ${\lambda}_{+,c}$ is defined in (\ref{17110101}).
 Let $\gamma_1\geq \gamma_2\geq\cdots\geq \gamma_{p\wedge n}$ be the ordered $p$-quantiles of $F_{c_n}^K$, i.e., $\gamma_j$ is the smallest real number such that  
\begin{align*}
\int_{-\infty}^{\gamma_j} {\rm d} F_{c_n}^K(x)=\frac{p-j+1}{p}, \qquad j\in \llbracket 1, n\wedge p\rrbracket.  
\end{align*}
We further define the deterministic control parameter
\begin{align}
\Psi\equiv \Psi(z):= \sqrt{\frac{\Im \underline{m}(z)}{n\eta}}+\frac{1}{n\eta}.  \label{18080702}
\end{align}
With the above notations, we can now state the following strong local law. 
\begin{pro} \label{lem.weak law for R}  Under the assumption (\ref{assump on dim}),  the following hold:

(i): (Entrywise local law) The following bounds hold uniformly on ${\mathcal{D}}(\epsilon)$
\begin{align}
{\Lambda}_{ \rm d}(z)\prec \Psi(z), \qquad {\Lambda}_{ \rm o}(z)\prec \Psi(z).  \label{17111705}
\end{align}

(ii): (Strong local law) The following bound holds uniformly on ${\mathcal{D}}(\epsilon)$
\begin{align}
{\Lambda} (z)\prec\frac{1}{n\eta}. \label{17111663}
\end{align}

(iii): (Rigidity on the right edge). For $i\in [1, \delta p]$ with any sufficiently small constant $\delta \in (0,1)$, we have 
\begin{align}
|\lambda_{i}(K)-{\gamma}_i|\prec n^{-\frac{2}{3}}i^{-\frac13}.  \label{17111701}
\end{align}
\end{pro}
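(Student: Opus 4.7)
The plan is to follow the Erd\H{o}s-Yau-Yin / Pillai-Yin strong-local-law framework for sample-covariance-type matrices, adapted to the fact that the quadratic-form large deviation of Corollary~\ref{cor. large devi} produces the kernel $\wt{\Gamma}$ in place of the scalar multiple of the identity that appears for standard sample covariance. I would begin from the Schur-complement identity
\[
\frac{1}{G_{kk}(z)} = -z - z\,\mathbf{v}_k \mathcal{G}^{(k)}(z)\mathbf{v}_k',
\]
noting that $\mathcal{G}^{(k)}$ is independent of $\mathbf{v}_k$ by the row-independence of $\Theta$. Applying (\ref{17113010}) with $B=\mathcal{G}^{(k)}$, together with the Ward identity $\mathrm{Tr}|\mathcal{G}^{(k)}|^2 = \eta^{-1}\mathrm{Im}\,\mathrm{Tr}\,\mathcal{G}^{(k)}$, yields
\[
\mathbf{v}_k \mathcal{G}^{(k)} \mathbf{v}_k' = \frac{1}{M}\mathrm{Tr}(\mathcal{G}^{(k)}\wt{\Gamma}) + O_\prec(\Psi).
\]
A standard resolvent perturbation then lets one replace $\mathcal{G}^{(k)}$ by $\mathcal{G}$ at negligible cost, and off-diagonal entries $G_{k\ell}$ are expressed through the analogous formula and controlled by the linear-form bound (\ref{17113070}).

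The crucial step---and the main obstacle---is to identify $\frac{1}{M}\mathrm{Tr}(\mathcal{G}(z)\wt{\Gamma})$ as a deterministic function of $m(z)$, so that a perturbed version of the self-consistent equation (\ref{17113014}) emerges. Using $\wt{\Gamma} = \frac{1}{3}T'T + \frac{1}{3}I_M$ from (\ref{17120301}) and (\ref{17113050}), one splits
\[
\frac{1}{M}\mathrm{Tr}(\mathcal{G}\wt{\Gamma}) = \frac{1}{3M}\mathrm{Tr}(T\mathcal{G} T') + \frac{1}{3M}\mathrm{Tr}\,\mathcal{G},
\]
where the second piece relates to $m$ via the standard companion identity for rectangular matrices. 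For the first piece I would use $\mathbf{u}_k = \mathbf{v}_{k,\bullet}T$ (cf.\ (\ref{17102202})) together with the decomposition $\Theta = U + \bar V$: expanding $\mathcal{G}$ around the purely linear companion $(U'U - z)^{-1}$, the quantity $T\mathcal{G} T'$ reduces, up to negligible error, to the companion Green function of an ordinary sample covariance matrix whose deterministic equivalent is computable from classical MP theory. Controlling the crossing term $\bar V U'$ and the purely nonlinear term $\bar V\bar V' - \frac{1}{3}I_p$ is where Proposition~\ref{lem. large deviation nonlinear part} becomes essential, since the sharper bounds (\ref{17101051}) and (\ref{17101052}) are precisely what allow these contributions to be absorbed into $\Psi$. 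The algebra culminates in a perturbed version of (\ref{17113014}) with additive error $\prec \Psi + \Lambda$.

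With this perturbed self-consistent equation in hand, part~(i) is obtained by the standard bootstrap on $\eta$: a weak initial bound $\Lambda \prec 1$ holds at $\eta \asymp 1$ from the deterministic estimate $|G_{kk}|\leq \eta^{-1}$, and one descends to $\eta \asymp n^{-1+\epsilon}$ using stability of (\ref{17113014}) (quantified through Lemma~\ref{lem.properties of m}) together with the continuity of $G(z)$; this matches Section~3 of \cite{PY} step by step once the identification above is in place. Part~(ii) follows from (i) by a fluctuation-averaging argument: averaging the Schur identities over $k$ exploits row-independence of $\Theta$ and the sharper nonlinear estimate (\ref{17101052}) to gain an additional factor $(n\eta)^{-1/2}$, upgrading $\Psi$ to $(n\eta)^{-1}$. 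Finally, part~(iii) is a standard consequence of (ii) via a Helffer-Sj\"ostrand integration, using the square-root edge behavior of $\rho^K_{c_n}$ at $\lambda_{+,c_n}$ (inherited from the MP density under the affine shift in Theorem~\ref{thm. global law of K}) to produce rigidity at scale $n^{-2/3}i^{-1/3}$, exactly as in \cite{EYY, PY}.
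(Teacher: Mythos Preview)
Your overall framework is correct—the Schur complement, the large-deviation step producing the centering $\frac{1}{M}\mathrm{Tr}(\mathcal{G}^{(k)}\wt\Gamma)$, the bootstrap in $\eta$, and the fluctuation-averaging upgrade all match the paper's architecture. The gap is in how you close the self-consistent equation.

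You propose to identify $\frac{1}{M}\mathrm{Tr}(\mathcal{G}\wt\Gamma)$ by expanding $\mathcal{G}=(\Theta'\Theta-z)^{-1}$ around the linear companion $(U'U-z)^{-1}$ and controlling the remainder via Proposition~\ref{lem. large deviation nonlinear part}. This does not work as stated: the perturbation $\Theta'\Theta-U'U = U'\bar V+\bar V'U+\bar V'\bar V$ has $\|\bar V'\bar V\|=\|\bar V\bar V'\|\asymp\tfrac13$ (the diagonal of $\bar V\bar V'$ concentrates near $\tfrac13$), so a resolvent expansion around $(U'U-z)^{-1}$ does not converge for $z$ near the edge. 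Any comparison of $\mathcal{G}$ to the linear companion at the precision needed here is essentially what Sections~\ref{s. decoupling}--\ref{s.first order appro} accomplish, and those arguments take the local law as \emph{input}; invoking them here would be circular. Relatedly, the sharper nonlinear bounds (\ref{17101051})--(\ref{17101052}) are not used in the paper's proof of Proposition~\ref{lem.weak law for R}; only the cruder Corollary~\ref{cor. large devi} enters, and the refined estimates are reserved for the later Green-function comparison.

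The paper closes the equation by a purely algebraic device you are missing. Writing $m_\Gamma:=\frac{1}{M}\mathrm{Tr}(B\wt\Gamma)$ with $B=\Theta'G\Theta$, one expands $m_\Gamma^{(k)}=\frac{1}{M}\sum_\ell^{(k)}\mathbf v_\ell\mathcal G^{(k)}\wt\Gamma\mathbf v_\ell'$ by Sherman--Morrison and applies the large deviation (\ref{17113010}) once more; the numerator now contains $\frac{1}{M}\mathrm{Tr}(B^{(k\ell)}\wt\Gamma^{2})$. The key point is that $\Gamma^2=\tfrac{n}{3}\Gamma$ (equation (\ref{17092601})), hence $\wt\Gamma^{2}=\tfrac{n+2}{3}\wt\Gamma-\tfrac{n+1}{9}I_M$, so the second iteration returns a linear combination of $m_\Gamma$ itself and $m_I:=\frac{1}{M}\mathrm{Tr}B$. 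This closes the system without ever comparing to the linear model: one gets a scalar quadratic for $m_\Gamma$ (equation (\ref{17103111})), then $m=(1-z-m_\Gamma)^{-1}+O_\prec(\Pi)$, and eliminating $m_\Gamma$ yields the perturbed version of (\ref{17113014}). The fluctuation averaging in part~(ii) likewise operates on $m_\Gamma$ and $m_I$ (Lemma~\ref{lem. fluctuation averaging}), not on a comparison to $UU'$.

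For part~(iii), beyond the strong local law the paper also needs the crude a~priori bound $\lambda_1(K)\prec 1$ (Appendix~\ref{appendix A}, via $\|UU'\|\prec 1$ and $\|\bar V\bar V'\|\prec 1$); without it the argument on $\mathcal{D}(\epsilon)$ cannot exclude stray eigenvalues far to the right of $\lambda_{+,c_n}$.
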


\section{Decoupling} \label{s. decoupling}
In this section, we compare the Green functions of the matrix $K$ with another random matrix $\wh{K}$ which has independent linear part and ``nonlinear" part (c.f. (\ref{17110202})).  Recall  (\ref{170920100}).  We set the matrices
\begin{align}
U:=\frac{1}{\sqrt{M}}\big((v_{k,(i\cdot)}-v_{k,(j,\cdot)})\big)_{k,(ij)}, \qquad \bar{V}:=  \frac{1}{\sqrt{M}}\big(\bar{v}_{k,(ij)}\big)_{k,(ij)}\label{171130101}
\end{align} 
and let 
\begin{align*}
H:=\frac{1}{\sqrt{M}} \big( h_{k,(ij)}\big)_{k,(ij)}, \qquad k\in \llbracket 1, p\rrbracket,\qquad 1\leq i<j\leq n 
\end{align*}
be a $p\times M$ matrix, where the entries $h_{k,(ij)}$'s are i.i.d. $N(0, \frac13)$. We also set the random variables 
$
h_{k,(ij)}:=-h_{k,(ji)} 
$
if $i\geq j$, for further use. 
We  assume that $H$ is independent of $U$.  
We define the random matrices 
\begin{align}
\wh{\Theta}:=(U+H),\qquad  \wh{K}:=  \wh{\Theta}\wh{\Theta}'=(U+H)(U+H)'. \label{17110202}
\end{align}
Then we denote the Green function of $\wh{K}$ and its  normalized trace by 
\begin{align*}
\wh{G}(z):=(\wh{K}-z)^{-1}, \qquad \wh{m}(z):=\frac{1}{p} \mathrm{Tr} \wh{G}(z)
\end{align*}

In this section, we will establish the following comparison proposition. 
\begin{pro} \label{lem.17100801}Let $\varepsilon>0$ be any sufficiently small constant. Set  $\eta=n^{-\frac23-\varepsilon}$. Let $E_1, E_2\in \mathbb{R}$ satisfy $E_1<E_2$ and 
\begin{align}
|E_1|, |E_2|\leq n^{-\frac23+\varepsilon}.  \label{17100940}
\end{align}
Let $F:\mathbb{R}\to \mathbb{R}$ be a smooth function satisfying 
$
\max_{x\in \mathbb{R}} |F^{(\ell)}(x)|(|x|+1)^{-C}\leq C,  \ell=1,2,3,4,
$
for some positive constant $C$.  Then, there exists a constant $\delta>0$ such that, for sufficiently large $n$ we have 
\begin{align*}
&\Big| \mathbb{E} F\Big( n \int_{E_1}^{E_2} \Im m (x+{\lambda}_{+,c_n}+\mathrm{i}\eta){\rm d} x\Big)\nonumber\\
&\qquad\qquad-\mathbb{E} F\Big( n \int_{E_1}^{E_2} \Im \wh{m} (x+{\lambda}_{+,c_n}+\mathrm{i}\eta){\rm d} x\Big)\Big|\leq n^{-\delta}. 
\end{align*}
\end{pro}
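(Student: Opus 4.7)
The plan is a Lindeberg-style row-by-row replacement in the nonlinear part. Define interpolating matrices $\Theta_{(\gamma)} := U + \Xi_{(\gamma)}$ for $\gamma \in \llbracket 0, p\rrbracket$, where rows $1,\ldots,\gamma$ of $\Xi_{(\gamma)}$ are taken from $H$ and the remaining rows from $\bar V$, so $\Theta_{(0)} = \Theta$ and $\Theta_{(p)} = \wh{\Theta}$. Writing $m_{(\gamma)}$ for the normalized trace of the Green function of $K_{(\gamma)} := \Theta_{(\gamma)}\Theta_{(\gamma)}'$ and $\mathcal{X}_\gamma := n \int_{E_1}^{E_2} \Im m_{(\gamma)}(x + {\lambda}_{+,c_n} + \ii \eta) \dd x$, the statement reduces by telescoping to a per-swap estimate $|\mathbb{E} F(\mathcal{X}_{\gamma-1}) - \mathbb{E} F(\mathcal{X}_\gamma)| = O(n^{-1-\delta'})$ for some $\delta' > 0$, which summed over $\gamma \in \llbracket 1, p\rrbracket$ yields the proposition with $\delta = \delta'/2$.

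For a single swap at row $k$, I would first apply the Schur complement identity (\ref{17091701}), together with its analogues for off-diagonal entries, to isolate the dependence of $G_{(\gamma)}$ on the $k$-th row. Each entry then becomes a rational function of quadratic and bilinear forms $\mb{v}_k Q \mb{v}_k'$, $\mb{v}_k Q \mb{v}_\ell'$, $\mb{v}_\ell Q \mb{v}_\ell'$, built from $\mb{v}_k = \mb{u}_k + \bar{\mb{v}}_k$ (respectively $\mb{u}_k + \mb{h}_k$ after the swap) and deterministic coefficients coming from $G^{(k)}$, the Green function of the submatrix with row $k$ removed, which is identical in both ensembles. The smoothed statistic $\mathcal{X}_\gamma$ inherits this structure as a contour integral of $m_{(\gamma)}$, and Taylor-expanding $F$ to fourth order in the swapped nonlinear contribution reduces the comparison to a finite sum of mixed moments which one evaluates separately in the $\bar V$- and $H$-cases.

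Two ingredients produce the cancellation at each order. First, $\bar{\mb{v}}_k$ and $\mb{h}_k$ share the same covariance structure: using (\ref{17111970}) and conditioning on the shared sample index when $\{i,j\}\cap\{s,t\}\neq\emptyset$, one checks $\mathbb{E} \bar v_{k,(ij)} \bar v_{k,(st)} = \tfrac{1}{3}\delta_{(ij),(st)} = \mathbb{E} h_{k,(ij)} h_{k,(st)}$, so the second-order Taylor term matches identically. Second, the residual third- and fourth-order contributions, as well as every term involving a crossing with $\mb{u}_k$, are controlled by the sharp large deviations of Proposition \ref{lem. large deviation nonlinear part}: the bounds (\ref{17101051}) for $\mb{u}_k Q \bar{\mb{v}}_k'$ and (\ref{17101052}) for $\bar{\mb{v}}_k Q \bar{\mb{v}}_k' - \tfrac{1}{3M} \mr{Tr} Q$ each carry an extra $\sqrt{n/M} \sim n^{-1/2}$ gain beyond the naive covariance-counting bound. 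Combined with the resolvent size estimates from Proposition \ref{lem.weak law for R} (controlling $\mr{Tr} |Q|^2$ and operator norms uniformly near the edge), this yields the needed $n^{-1-\delta'}$ per swap after summing all Taylor monomials.

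The main obstacle is the genuine dependence between $\mb{u}_k$ and $\bar{\mb{v}}_k$. Although they are uncorrelated by construction, both are measurable functions of the same sample $w_{k,\cdot}$, so any joint moment $\mathbb{E}[\phi(\mb{u}_k)\psi(\bar{\mb{v}}_k)]$ does \emph{not} factorize. For the $H$-swap, the independence of $H$ from $U$ together with Wick's theorem make the Taylor terms clean; for the $\bar V$-swap, by contrast, each cross moment must be handled directly. The decisive structural observation is that in every Taylor monomial arising from the expansion the $\mb{u}_k$-dependence enters only through linear or bilinear forms $\mb{u}_k \mb{a}'$ or $\mb{u}_k Q \bar{\mb{v}}_k'$ whose coefficients, once $\Theta^{(k)}$ is conditioned on, are deterministic; at that point Propositions \ref{lem. large deviation linear part} and \ref{lem. large deviation nonlinear part}, combined with the a priori resolvent bounds, produce the smallness required. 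Summing the per-swap estimate over $\gamma \in \llbracket 1, p\rrbracket$ then completes the proof.
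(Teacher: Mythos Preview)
Your proposal follows the same overall architecture as the paper's proof: row-by-row Lindeberg replacement, Schur complement to isolate the swapped row, Taylor expansion of $F$ about the minor-Green function, covariance matching at second order, and large-deviation control of the remainders. The identification of the $\mb{u}_k$--$\bar{\mb{v}}_k$ dependence as the central obstacle is correct, and the paper resolves it with exactly the tools you name.

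There is, however, one genuine subtlety you are glossing over. You assert that the crossing bound \eqref{17101051} for $\mb{u}_k B_\gamma \bar{\mb{v}}_k'$ automatically carries an $n^{-1/2}$ gain, and that this together with ``resolvent size estimates'' suffices. In fact the right-hand side of \eqref{17101051} has two terms: the first, $\sqrt{n\mr{Tr}|B_\gamma|^2}/M$, does give the gain you claim; but the second, $M^{-1}\big(\sum_\ell |\sum_j (TB_\gamma)_{j,(\ell j)}|^2\big)^{1/2}$, does \emph{not} admit a good bound from operator-norm or trace estimates alone---the crude inequality \eqref{17113040} recovers only $\sqrt{\mr{Tr}|B_\gamma|^2/M}\prec n^{-1/3+\varepsilon}$, which is insufficient. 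The paper needs an a posteriori structural estimate (its equation \eqref{17112801}) showing this second term is $O_\prec(n^{-1/2})$; the proof occupies roughly two pages and proceeds by writing $T\Theta'G\Theta$ out via Hoeffding decomposition of the columns $\bs\theta_{(ij)}$ and controlling the resulting pieces by operator-norm bounds on various auxiliary sample-covariance-type matrices. A parallel issue arises for the mixed moments $\mathbb{E}[\mb{u}_\gamma A_\gamma \mb{u}_\gamma'\cdot \mc{P}_\gamma]$ appearing in the first-order Taylor term: these cannot be bounded by applying Propositions~\ref{lem. large deviation linear part}--\ref{lem. large deviation nonlinear part} separately, because the factors are dependent; the paper computes the relevant joint moments $\mathbb{E}[v_{\gamma,(a\cdot)}v_{\gamma,(b\cdot)}v_{\gamma,(c\cdot)}\bar v_{\gamma,(ij)}]$ explicitly and then again needs structural estimates on the coefficient sums (see the argument around \eqref{171128106}--\eqref{17120160}). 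Your proposal should acknowledge that this ``a posteriori'' step---exploiting the specific form $B_\gamma=(\Theta_\gamma^{(\gamma)})'G_\gamma^{(\gamma)}\Theta_\gamma^{(\gamma)}$ rather than treating $B_\gamma$ as a generic matrix---is where most of the technical work lies.
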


\begin{proof}[Proof of Proposition \ref{lem.17100801}]   For simplicity, in this proof, we denote by 
\begin{align}
z\equiv z(x):= x+{\lambda}_{+,c_n}+\mathrm{i}\eta, \qquad x\in [E_1, E_2].  \label{17110234}
\end{align}
Recall the small constant $\varepsilon$ in Proposition \ref{lem.17100801}. For brevity, we will simply write $C\varepsilon$ with any positive constant (independent of $\varepsilon$)  by $\varepsilon$ in the sequel. In other words, we allow $\varepsilon$ to vary from line to line, up to  $C$.  We then construct the following sequence of the interpolations: 
$
\Theta= \Theta_0,  \ldots, \Theta_{\gamma-1}, \; \Theta_{\gamma} \ldots, \; \Theta_p=\wh{\Theta},
$
where $\Theta_\gamma$ is the matrix whose first $\gamma$ rows are the same as those of  $\wh{\Theta}$ and the remaining $p-\gamma$ rows are the same as those of  $\Theta$.  Correspondingly, we set the notations
\begin{align*}
K_\gamma= \Theta_\gamma\Theta_\gamma', \qquad G_\gamma(z):=(K_\gamma-z)^{-1}, \qquad m_\gamma:= \frac{1}{p} \mr{Tr} G_\gamma(z). 
\end{align*}
We first claim the following lemma, whose proof is stated  in the supplementary material \cite{BaoSupp}.  
\begin{lem}[Local law for $K_\gamma$] \label{lem. local law for K gamma}
All the estimates in Proposition  \ref{lem.weak law for R} hold for $K_\gamma$ for all $\gamma\in \llbracket 0, p \rrbracket$.
\end{lem}
With Lemma \ref{lem. local law for K gamma}, we proceed to the proof of Proposition \ref{lem.17100801}. 
Using the above notations, we can write  
\begin{align*}
&\mathbb{E} F\Big( n \int_{E_1}^{E_2} \Im m (z){\rm d} x\Big)-\mathbb{E} F\Big( n \int_{E_1}^{E_2} \Im \wh{m} (z){\rm d} x\Big)\nonumber\\
&=  \mathbb{E} F\Big( n \int_{E_1}^{E_2} \Im m_0 (z){\rm d} x\Big)-\mathbb{E} F\Big( n \int_{E_1}^{E_2} \Im {m}_p (z){\rm d} x\Big)\nonumber\\
&= \sum_{\gamma=1}^p \Big( \mathbb{E} F\Big( n \int_{E_1}^{E_2} \Im m_{\gamma-1} (z){\rm d} x\Big) - \mathbb{E} F\Big( n \int_{E_1}^{E_2} \Im m_\gamma (z){\rm d} x\Big)\Big).
\end{align*}
Hence, it suffices to show that for all $\gamma\in \llbracket 1, p\rrbracket$, 
\begin{align}
\bigg| \mathbb{E} F\Big( n \int_{E_1}^{E_2} \Im m_{\gamma-1} (z){\rm d} x\Big) - \mathbb{E} F\Big( n \int_{E_1}^{E_2} \Im m_\gamma (z){\rm d} x\Big)\bigg|\leq n^{-1-\delta}  \label{17100920}
\end{align}
for some positive constant $\delta$. 
For a fixed $\gamma$, we further introduce the notation $\Theta_\gamma^{(i)}$ to denote the matrix obtained from $\Theta_\gamma$ with the $i$-th row removed. Then, by definition, we have 
$\Theta_{\gamma-1}^{(\gamma)}=\Theta_\gamma^{(\gamma)}$. Correspondingly, we use the notations
\begin{align*}
K_\gamma^{(i)}:=\Theta_\gamma^{(i)} (\Theta_\gamma^{(i)})', \qquad G_\gamma^{(i)}:= (K_\gamma^{(i)}-z)^{-1}, \qquad m_\gamma^{(i)}:=\frac{1}{p} \mr{Tr} G_\gamma^{(i)}. 
\end{align*}
Also note that $m_{\gamma-1}^{(\gamma)}=m_\gamma^{(\gamma)}$.  Next, we expand both  $m_{\gamma-1}$ and $m_\gamma$  around $m_\gamma^{(\gamma)}$.
Observe that 
\begin{align}
m_{\gamma-1}-m_{\gamma}^{(\gamma)}= \frac{1}{p} \frac{1+\mb{v}_\gamma (\Theta_\gamma^{(\gamma)})' (G_\gamma^{(\gamma)})^2\Theta_\gamma^{(\gamma)} \mb{v}_\gamma'}{\mb{v}_\gamma\mb{v}_\gamma'-z-\mb{v}_\gamma (\Theta_\gamma^{(\gamma)})' G_\gamma^{(\gamma)}\Theta_\gamma^{(\gamma)} \mb{v}_\gamma'}=:  \frac{1}{p} \frac{1+\mb{v}_\gamma A_\gamma \mb{v}_\gamma'}{1-z-\mb{v}_\gamma B_\gamma \mb{v}_\gamma'}, \label{17110230}
\end{align}
where in the last step we use the trivial fact $\mb{v}_\gamma\mb{v}_\gamma'=1$. 
 Similarly,
\begin{align}
m_{\gamma}-m_{\gamma}^{(\gamma)}= \frac{1}{p} \frac{1+\hat{\mb{v}}_\gamma A_\gamma \hat{\mb{v}}_\gamma'}{\hat{\mb{v}}_\gamma\hat{\mb{v}}_\gamma'-z-\hat{\mb{v}}_\gamma B_\gamma \hat{\mb{v}}_\gamma'}, \label{17100923}
\end{align}
where we use the notation $\hat{\mb{v}}_\gamma:=\mb{u}_\gamma+\mb{h}_\gamma$ to denote the $\gamma$-th row of $\wh{\Theta}$.

We then further set 
\begin{align}
D_\gamma:= \mb{v}_\gamma B_\gamma \mb{v}_\gamma'- \frac{1}{M} \mr{Tr} B_\gamma{\Gamma}, \quad \wh{D}_\gamma:= 1-\hat{\mb{v}}_\gamma\hat{\mb{v}}_\gamma'+\hat{\mb{v}}_\gamma B_\gamma \hat{\mb{v}}_\gamma'-\frac{1}{M} \mr{Tr} B_\gamma{\Gamma}, \label{17100924}
\end{align} 
and write
\begin{align}
&D_\gamma=\Big(\mb{u}_\gamma B_\gamma \mb{u}_\gamma'- \frac{1}{M} \mr{Tr} B_\gamma{\Gamma}\Big)+\bar{\mb{v}}_\gamma B_\gamma \bar{\mb{v}}'_\gamma+ 2\mb{u}_\gamma B_\gamma \bar{\mb{v}}_\gamma'=: \mc{U}_\gamma+\mc{V}_\gamma+\mc{P}_\gamma.\nonumber\\
&\wh{D}_\gamma= \Big({\mb{u}}_\gamma B_\gamma {\mb{u}}_\gamma'-\frac{1}{M} \mr{Tr} B_\gamma{\Gamma}\Big)+{\mb{h}}_\gamma B_\gamma {\mb{h}}_\gamma'+ 2\mb{u}_\gamma B_\gamma {\mb{h}}_\gamma'\nonumber\\
&\qquad\quad+\Big(\frac23-{\mb{u}}_\gamma{\mb{u}}_\gamma'\Big)+ \Big(\frac13-{\mb{h}}_\gamma{\mb{h}}_\gamma'-2{\mb{u}}_\gamma{\mb{h}}_\gamma'\Big)\nonumber\\
&\qquad =: \mc{U}_\gamma+\hat{\mc{V}}_\gamma+\hat{\mc{P}}_\gamma+\hat{\mc{W}}_\gamma+\hat{\mc{O}}_\gamma, \label{171102101}
\end{align}
where we recall that $B_\gamma$ is (complex) symmetric. Similarly, we write 
\begin{align}
{\mb{v}}_\gamma A_\gamma {\mb{v}}_\gamma' &= {\mb{u}}_\gamma A_\gamma {\mb{u}}_\gamma'+ \bar{\mb{v}}_\gamma A_\gamma \bar{\mb{v}}_\gamma'+2{\mb{u}}_\gamma A_\gamma \bar{\mb{v}}_\gamma' =: {\mb{u}}_\gamma A_\gamma {\mb{u}}_\gamma'+ \bar{\mb{v}}_\gamma A_\gamma \bar{\mb{v}}_\gamma'+{\mc{Q}}_\gamma, \nonumber\\
\hat{\mb{v}}_\gamma A_\gamma \hat{\mb{v}}_\gamma' &= {\mb{u}}_\gamma A_\gamma {\mb{u}}_\gamma'+ {\mb{h}}_\gamma A_\gamma {\mb{h}}_\gamma'+ 2 {\mb{u}}_\gamma A_\gamma {\mb{h}}_\gamma' =: {\mb{u}}_\gamma A_\gamma {\mb{u}}_\gamma'+ {\mb{h}}_\gamma A_\gamma {\mb{h}}_\gamma'+\hat{\mc{Q}}_\gamma. \label{171128100}
\end{align}
We have the following crucial technical lemma.
\begin{lem}\label{lem. technical estimates}Let $\eta=n^{-\frac23-\varepsilon}$, and $x,x_1,x_2\in[E_1,E_2]$, where $E_1$ and $E_2$ satisfy (\ref{17100940}).  Let $z=x+\lambda_{+,c_n}+\ii \eta$ and $z_a=x_a+\lambda_{+,c_n}+\ii \eta, a=1,2$. With the above notations, we have 
\begin{align}
&|\mc{U}_\gamma(z)|\prec n^{-\frac13+\varepsilon}, \quad |\mc{V}_\gamma(z)|\prec n^{-\frac56+\varepsilon},   \quad |\hat{\mc{P}}_\gamma(z)|\prec n^{-\frac56+\varepsilon}, \nonumber\\
 &  |\hat{\mc{V}}_\gamma(z)|\prec n^{-1+\varepsilon},\quad |\hat{\mc{W}}_\gamma(z)|\prec n^{-\frac12+\varepsilon}, \quad |\hat{\mc{O}}_\gamma(z)|\prec n^{-1+\varepsilon},  \quad |\hat{\mc{Q}}_\gamma(z)|\prec n^{-\frac16+\varepsilon}, \nonumber\\
 &    |{\mb{u}}_\gamma A_\gamma(z) {\mb{u}}_\gamma'|\prec n^{\frac13+\varepsilon}, \quad |\bar{\mb{v}}_\gamma A_\gamma(z) \bar{\mb{v}}_\gamma'|\prec n^{-\frac16+\varepsilon}, \quad  |{\mb{h}}_\gamma A_\gamma(z) {\mb{h}}_\gamma'|\prec  n^{-\frac12+\varepsilon},
 \label{17112601}
 \end{align}
 and 
 \begin{align}
 |\mc{P}_\gamma(z)|\prec n^{-\frac12+\varepsilon},\quad |\mc{Q}_\gamma(z)|\prec n^{-\frac16+\varepsilon} \label{171201100}
\end{align}
In addition, we have 
\begin{align}
&  \big|\mathbb{E}\big(\mb{u}_\gamma A_\gamma (z) \mb{u}_\gamma'\hat{\mc{W}}_\gamma\big) \big|\prec n^{-\frac23+\varepsilon}, \quad \big|\mathbb{E}\big( {\mb{u}}_\gamma A_\gamma(z_1) {\mb{u}}_\gamma' \mc{P}_\gamma(z_2)\big)\big|\prec  n^{-\frac12+\varepsilon}.  \label{17112610}
\end{align} 
The above estimates still hold if we replace some or all of $z, z_1,z_2$ by their complex conjugates. 
\end{lem}
The  proof of Lemma \ref{lem. technical estimates} will be stated in the supplementary material \cite{BaoSupp}. Two key technical inputs for the proof are Propositions \ref{lem. large deviation linear part} and \ref{lem. large deviation nonlinear part}.

We proceed to the proof of  Proposition \ref{lem.17100801}, with the aid of Lemma \ref{lem. technical estimates}. 
First, using (\ref{17112601}) and (\ref{171201100}), we can write
\begin{align}
 n \int_{E_1}^{E_2} \big(m_{\gamma-1}(z)-m_{\gamma}^{(\gamma)}(z)\big){\rm d} x &= \frac{n}{p} \int_{E_1}^{E_2}  \frac{1+\mb{v}_\gamma A_\gamma \mb{v}_\gamma'}{1-z- \frac{1}{M} \mr{Tr} B_\gamma{\Gamma}-D_\gamma} {\rm d} x \nonumber\\
 &=\tau_{\gamma 0}+\tau_{\gamma 1}+\tau_{\gamma 2}+O_\prec(n^{-\frac76+\varepsilon}),\label{17100930}
\end{align}
where
\begin{align}
&\tau_{\gamma 0}:= \frac{n}{p}\int_{E_1}^{E_2} \frac{1+\mb{v}_\gamma A_\gamma \mb{v}_\gamma'}{(1-z- \frac{1}{M} \mr{Tr} B_\gamma{\Gamma})}  {\rm d} x=O_\prec(n^{-\frac13+\varepsilon}), \nonumber\\
&  \tau_{\gamma 1}:=  \frac{n}{p}\int_{E_1}^{E_2} \frac{1+\mb{u}_\gamma A_\gamma \mb{u}_\gamma'}{(1-z- \frac{1}{M} \mr{Tr} B_\gamma{\Gamma})^{2}} \big(\mc{U}_\gamma+\mc{P}_\gamma \big){\rm d} x=O_\prec(n^{-\frac23+\varepsilon}),\nonumber\\
& \tau_{\gamma 2}:=  \frac{n}{p}\int_{E_1}^{E_2} \frac{1+\mb{u}_\gamma A_\gamma \mb{u}_\gamma'}{(1-z- \frac{1}{M} \mr{Tr} B_\gamma{\Gamma})^{3}} \mc{U}_\gamma^2 {\rm d} x=O_\prec(n^{-1+\varepsilon}). \label{17110310}
\end{align}
Here we  use the fact $1/(1-z- \frac{1}{M} \mr{Tr} B_\gamma{\Gamma})\sim 1$ with high probability, which follows from  $1/(1-z- \frac{1}{M} \mr{Tr} B_\gamma{\Gamma})=\underline{m}+O_\prec(\frac{1}{n\eta})$ (c.f. Lemma \ref{lem. local law for K gamma} and an analogue of (\ref{171116112} )), and also  (\ref{17113020}). Analogously, we have 
\begin{align}
 n \int_{E_1}^{E_2} \big(m_{\gamma}(z)-m_{\gamma}^{(\gamma)}(z)\big){\rm d} x &= \frac{n}{p} \int_{E_1}^{E_2}  \frac{1+\hat{\mb{v}}_\gamma A_\gamma \hat{\mb{v}}_\gamma'}{1-z- \frac{1}{M} \mr{Tr} B_\gamma{\Gamma}-\wh{D}_\gamma} {\rm d} x \nonumber\\
 &=\hat{\tau}_{\gamma 0}+\hat{\tau}_{\gamma 1}+{\tau}_{\gamma 2}+O_\prec(n^{-\frac76+\varepsilon}),
\end{align}
where
\begin{align}
&\hat{\tau}_{\gamma 0}:= \frac{n}{p}\int_{E_1}^{E_2} \frac{1+\hat{\mb{v}}_\gamma A_\gamma \hat{\mb{v}}_\gamma'}{(1-z- \frac{1}{M} \mr{Tr} B_\gamma{\Gamma})}  {\rm d} x=O_\prec(n^{-\frac13+\varepsilon}), \nonumber\\
&  \hat{\tau}_{\gamma 1}:=  \frac{n}{p}\int_{E_1}^{E_2} \frac{1+\mb{u}_\gamma A_\gamma \mb{u}_\gamma'}{(1-z- \frac{1}{M} \mr{Tr} B_\gamma{\Gamma})^{2}} (\mc{U}_\gamma+\hat{\mc{W}}_\gamma) {\rm d} x=O_\prec(n^{-\frac23+\varepsilon}).
\label{17110311}
\end{align}

For brevity, we further introduce the notation
$
\zeta_\gamma:= n \int_{E_1}^{E_2} \Im m_{\gamma}^{(\gamma)} (z){\rm d} x. 
$
 Then we can write 
\begin{align*}
&F\Big( n \int_{E_1}^{E_2} \Im m_{\gamma-1} (z){\rm d} x\Big)= F(\zeta_\gamma)
+ F'(\zeta_\gamma) (\Im \tau_{\gamma 0}+\Im \tau_{\gamma 1}+\Im \tau_{\gamma 2})\nonumber\\
&\qquad+\frac{F^{(2)}(\zeta_\gamma)}{2}\big((\Im \tau_{\gamma 0})^2+ 2\Im \tau_{\gamma 0} \Im \tau_{\gamma 1}\big)+\frac{F^{(3)}(\zeta_\gamma)}{6} (\Im \tau_{\gamma 0})^3+O_\prec(n^{-\frac76+\varepsilon}). 
\end{align*}
Analogously, we have 
\begin{align*}
&F\Big( n \int_{E_1}^{E_2} \Im m_{\gamma} (z){\rm d} x\Big)= F(\zeta_\gamma)
+ F'(\zeta_\gamma) (\Im \hat{\tau}_{\gamma 0}+\Im \hat{\tau}_{\gamma 1}+\Im {\tau}_{\gamma 2})\nonumber\\
&\qquad+\frac{F^{(2)}(\zeta_\gamma)}{2}\big((\Im \hat{\tau}_{\gamma0 })^2+ 2\Im \hat{\tau}_{\gamma 0} \Im \hat{\tau}_{\gamma 1}\big)+\frac{F^{(3)}(\zeta_\gamma)}{6} (\Im \hat{\tau}_{\gamma 0})^3+O_\prec(n^{-\frac76+\varepsilon}). 
\end{align*}
Therefore, to establish (\ref{17100920}), it suffices to show the following
\begin{align}
&\mathbb{E} \Im {\tau}_{\gamma a}-\mathbb{E}\Im \hat{\tau}_{\gamma a}=O_\prec(n^{-1-\delta}), \qquad a=0,1\label{17110301}\\
&\mathbb{E} (\Im {\tau}_{\gamma 0})^2-\mathbb{E} (\Im \hat{\tau}_{\gamma 0})^2=O_\prec(n^{-1-\delta}),\label{17110302}\\
&\mathbb{E} \Im {\tau}_{\gamma 0} \Im {\tau}_{\gamma 1}-\mathbb{E}\Im \hat{\tau}_{\gamma 0} \Im \hat{\tau}_{\gamma 1}=O_\prec(n^{-1-\delta}),\label{17110303}\\
&\mathbb{E} (\Im {\tau}_{\gamma 0})^3-\mathbb{E} (\Im \hat{\tau}_{\gamma 0})^3=O_\prec(n^{-1-\delta}). \label{17101001}
\end{align}
We prove the above estimates one by one. First, for (\ref{17110301}) with $a=0$, we simply have 
$
\mathbb{E} \Im {\tau}_{\gamma 0}-\mathbb{E}\Im \hat{\tau}_{\gamma 0}=0,
$
 since the covariance matrix of $\mb{v}_\gamma$ and that of $\hat{\mb{v}}_\gamma$ are the same. For (\ref{17110301}) with $a=1$, the conclusion follows from the  estimates in (\ref{17112610}) and the bounds of $\mc{P}_\gamma$ and $\hat{\mc{W}}_\gamma$ in  (\ref{17112601}). 
 
 Next, we show  (\ref{17110302}). Observe that for any $\omega_1,\omega_2\in\mathbb{C}$, we can write $\Im \omega_1\Im \omega_2= \frac{1}{4}(\omega_1\bar{\omega}_2+\bar{\omega}_1\omega_2-\omega_1\omega_2-\bar{\omega}_1\bar{\omega}_2)$. According to the definitions in (\ref{17110310}) and (\ref{17110311}), and also the fact that the covariance matrix of $\mb{v}_\gamma$ and that of $\hat{\mb{v}}_\gamma$ are the same, it suffices to show 
 \begin{align}
 \mathbb{E}  \mb{v}_\gamma A_\gamma(z_1) \mb{v}_\gamma' \mb{v}_\gamma A_\gamma(z_2) \mb{v}_\gamma'
 -  \mathbb{E}  \hat{\mb{v}}_\gamma A_\gamma(z_1) \hat{\mb{v}}_\gamma' \hat{\mb{v}}_\gamma A_\gamma(z_2) \hat{\mb{v}}_\gamma'= O_\prec(n^{\frac13-\delta}),  \label{18081201}
 \end{align}
 and,  if we replace one or both of $z_1$ and $z_2$ by their complex conjugates, the analogues of (\ref{18081201}) are also true. 
Here $z_1,z_2$ satisfy the assumptions in Lemma \ref{lem. technical estimates}.  These desired estimates follow from the decompositions in (\ref{171128100}), and the bounds in (\ref{17112601}) for the terms in the decompositions. Similarly, applying the decompositions in (\ref{171128100}), and the bounds in (\ref{17112601}) again, one can show  (\ref{17110303}) and (\ref{17101001}).   We omit the details. This completes the proof of Proposition \ref{lem.17100801}. 
\end{proof}

\section{First-order approximation} \label{s.first order appro}
Recall  (\ref{171130101}). 
We first set 
\begin{align}
\qquad \wt{K}:=\frac{1}{3}I_p+UU', \quad \wt{G}(z):=(\wt{K}-z)^{-1}, \quad \wt{m}(z):=\frac{1}{p} \mr{Tr} \wt{G}(z).  \label{18080710}
\end{align}
In this section, our aim is to establish the following proposition. 
\begin{pro}\label{lem.17100802} Suppose that the  assumptions on $\eta, E_1, E_2, F$ in  Proposition \ref{lem.17100801} hold. For some constant $\delta>0$ and  sufficiently large $n$,  we have
\begin{align*}
&\Big| \mathbb{E} F\Big( n \int_{E_1}^{E_2} \Im \wh{m} (x+{\lambda}_{+,c_n}+\mathrm{i}\eta){\rm d} x\Big)\nonumber\\
&\qquad\qquad-\mathbb{E} F\Big( n \int_{E_1}^{E_2} \Im \wt{m} (x+{\lambda}_{+,c_n}+\mathrm{i}\eta){\rm d} x\Big)\Big|\leq n^{-\delta}. 
\end{align*}
\end{pro}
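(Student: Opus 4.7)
The proof plan is a Gaussian interpolation between $\wh K$ and $\wt K$ combined with Gaussian integration by parts, in the spirit of the continuous interpolation argument mentioned in the introduction. Define
\[
K_s := (U + \sqrt{s}\,H)(U + \sqrt{s}\,H)' + \frac{1-s}{3}\,I_p, \qquad s\in[0,1],
\]
so that $K_0 = \wt K$ and $K_1 = \wh K$. Write $G_s(z) = (K_s-z)^{-1}$ and $m_s(z) = p^{-1}\mathrm{Tr}\,G_s(z)$. Since the entries of $\sqrt{s}\,H$ are centered i.i.d.\ Gaussians of variance $s/(3M)$, the large-deviation estimates of Propositions~\ref{lem. large deviation linear part}--\ref{lem. large deviation nonlinear part} (the only probabilistic inputs to the proof of Proposition~\ref{lem.weak law for R}) hold for the rows of $U + \sqrt{s}\,H$ uniformly in $s$; in fact they are strictly sharper in the Gaussian case. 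Hence the local law, rigidity on the right edge, and all estimates of Lemma~\ref{lem. technical estimates} carry over to $K_s$ uniformly on $s\in[0,1]$.

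By the fundamental theorem of calculus, the proposition reduces to showing
\[
\frac{d}{ds}\,\mathbb{E}\,F\!\left( n\int_{E_1}^{E_2}\Im m_s(z)\,dx\right) = O(n^{-1-\delta})
\]
uniformly in $s\in[0,1]$, where $z=x+\lambda_{+,c_n}+\mathrm{i}\eta$. Differentiating gives
\[
\frac{d}{ds}m_s(z) = -\frac{1}{p}\,\mathrm{Tr}\big(G_s(z)^2\,K_s'\big), \qquad K_s' = \frac{1}{2\sqrt{s}}(UH'+HU') + HH' - \frac{1}{3}\,I_p.
\]

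The heart of the argument is a systematic application of Stein's lemma to every $h_{k,(ij)}$-factor. Two structural cancellations are built into the interpolation. First, one IBP on the $HH'$ term produces exactly $\tfrac{1}{3}\,\mathrm{Tr}\,G_s^2$ inside the expectation, which annihilates the deterministic $-\tfrac{1}{3}\,\mathrm{Tr}\,G_s^2$ contribution at leading order. Second, one IBP on the linear $UH'+HU'$ piece extracts a factor $\sqrt{s}/\sqrt{M}$ from the chain-rule derivative $\partial K_s/\partial h_{b,(ij)} = (\sqrt{s}/\sqrt{M})(\Xi_{\cdot,(ij)}e_b' + e_b\Xi_{\cdot,(ij)}')$ with $\Xi := U + \sqrt{s}\,H$, which exactly compensates the singular prefactor $1/(2\sqrt{s})$. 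After these cancellations, the surviving remainders are polynomials in the entries of $G_s$, $U$, and $H$ carrying an additional smallness of order $M^{-1}$ per pair of IBPs; they are controlled by the entrywise local law, the Ward identity $\mathrm{Tr}\,|G_s|^2 = \eta^{-1}\Im\,\mathrm{Tr}\,G_s \prec n^{4/3+\varepsilon}$, and the quadratic/linear large-deviation bounds on rows of $U$ from Proposition~\ref{lem. large deviation linear part}. Integrating the resulting $O(n^{-1-\delta})$ bound over $x\in[E_1,E_2]$ (of length $\le 2n^{-2/3+\varepsilon}$) and $s\in[0,1]$, together with the polynomial growth of $F$ and $F'$, delivers the claim.

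The main obstacle will be the combinatorial bookkeeping of the terms generated by the Leibniz rule after each IBP, especially those in which a derivative lands on $F'(\cdot)$: these contain $n\int \Im\,\partial_h m_s(z)\,dx$ and must be bounded by reapplying the local law, while derivatives of $G_s^2$ create longer Green-function chains that must be controlled via standard resolvent identities. A secondary technical point is to verify that the singularity at $s=0$ is absorbed by the $\sqrt{s}$ factor in every surviving remainder, so that the $s$-integral $\int_0^1\cdots ds$ converges without loss of a power of $n$. The precise choice $\mathbb{E}[HH'] = \tfrac{1}{3}I_p$, exactly matching the deterministic correction in $\wt K$, is specifically tailored so that all problematic leading-order terms cancel and only manageable higher-order remainders survive.
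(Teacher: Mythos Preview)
Your approach is correct and close to the paper's, but differs in two ways worth noting.

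\textbf{Parameterization.} The paper uses the interpolation $\wh{K}_t=(U+tH)(U+tH)'+\tfrac{1}{3}(1-t^2)I_p$ with $t\in[0,1]$, so that $\partial_t\wh{K}_t=(HU'+UH')+2t(HH'-\tfrac{1}{3}I_p)$ has no singularity. Your $\sqrt{s}$-parameterization is exactly the substitution $s=t^2$, so the $1/(2\sqrt{s})$ singularity and its cancellation by the $\sqrt{s}$ from the chain rule are a self-inflicted artifact; you can simply work with $t$ and avoid that bookkeeping entirely. (Relatedly, your stated target $\partial_s\mathbb{E}F=O(n^{-1-\delta})$ is stronger than needed; $O(n^{-\delta})$ suffices after integrating over $s\in[0,1]$.)

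\textbf{Where the IBP is performed.} The more substantive difference is that the paper does \emph{not} integrate by parts inside $\mathbb{E}[F'(\Phi_t)\cdot\partial_t\Phi_t]$. Instead it proves the \emph{high-probability} bounds
\[
\big|\mathrm{Tr}(HU'\wh{G}_t^2)\big|\prec n^{2/3-\delta},\qquad \big|\mathrm{Tr}((HH'-\tfrac{1}{3}I_p)\wh{G}_t^2)\big|\prec n^{2/3-\delta},
\]
via a recursive moment estimate: set $\mathcal{P}=\mathrm{Tr}(HU'\wh{G}_t^2)$, apply Gaussian IBP to $\mathbb{E}|\mathcal{P}|^{2k}$, and close the resulting inequality by Young's inequality (the strategy of Lee--Schnelli). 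This directly yields $|\partial_t\Phi_t|\prec n^{-\delta}$ pathwise, so the assumption on $F$ enters only through boundedness of $F'$. Your route---IBP on $\mathbb{E}[F'(\Phi_s)\cdot(\text{derivative})]$---also works: the $\mathfrak{c}_1$-type terms are identical to the paper's, and the extra terms where $\partial_h$ lands on $F'$ produce traces like $\tfrac{1}{M}\mathrm{Tr}(G_s(z)^2\Xi U'G_s(z')^2)$ which are bounded by $\|\Xi U'\|\cdot\eta^{-2}\mathrm{Tr}|G_s|^2\prec n^{8/3+\varepsilon}$ and pick up two factors of $|E_2-E_1|\le n^{-2/3+\varepsilon}$, giving contribution $\prec n^{-2/3+C\varepsilon}$. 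The paper's choice avoids this extra class of terms and yields the stronger pathwise statement, but both routes lead to the proposition.
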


\begin{proof}[Proof of Proposition \ref{lem.17100802}] 
We first define the following continuous interpolation between $\wh{K}$ and $\wt{K}$  and its Green function for $t\in[0,1]$, 
\begin{align}
\wh{K}_t:= (U+tH)(U+tH)'+\frac{1}{3}(1-t^2)I_p, \quad \wh{G}_t:=(\wh{K}_t-z)^{-1}.  \label{17112902}
\end{align}
and we also denote by 
$
\wh{m}_t:=\frac{1}{p} \mathrm{Tr} \wh{G}_t .
$
Especially, we have $\wh{K}_1=\wh{K}$ and $\wh{K}_0=\wt{K}$.  
Similar to Lemma \ref{lem. local law for K gamma}, we have the following local law for  $\wh{K}_t$, whose proof is stated in the supplementary material \cite{BaoSupp}. 
\begin{lem} [Local law for $\wh{K}_t$] \label{lem. local law for K hat t}
All the estimates in Proposition  \ref{lem.weak law for R}  hold for $\wh{K}_t$ for all $t\in[0,1]$.
\end{lem}
With the aid of Lemma \ref{lem. local law for K hat t}, we now proceed to the proof of Proposition \ref{lem.17100802}. 
For brevity, we simply write $z\equiv z(x):= x+{\lambda}_{+,c_n}+\mathrm{i}\eta$ in the sequel, and further introduce the notation
\begin{align}
\Phi_t:= n \int_{E_1}^{E_2} \Im \wh{m}_t (z){\rm d} x. \label{17112901}
\end{align}
Then we can write 
\begin{align*}
&\mathbb{E} F\Big( n \int_{E_1}^{E_2} \Im \wh{m} (z){\rm d} x\Big)-\mathbb{E} F\Big( n \int_{E_1}^{E_2} \Im \wt{m} (z){\rm d} x\Big)\nonumber\\
&\qquad\qquad= \int_{0}^1  \mathbb{E}  \frac{\partial }{\partial t} F\big( \Phi_t\big)  {\rm d} t=\int_0^1 \mathbb{E} \Big(F'\big(\Phi_t\big) \frac{\partial \Phi_t}{\partial t}\Big) {\rm d} t.
\end{align*}
Our aim is to show
\begin{align*}
\Big| \frac{\partial \Phi_t}{\partial t}\Big| \prec n^{-\delta}, \qquad \forall t\in [0,1].
\end{align*}
This, together with the assumption on $F'$,  leads to the conclusions in Proposition \ref{lem.17100802}. 
From the definition in (\ref{17112901}),  we have 
\begin{align*}
\frac{\partial \Phi_t}{\partial t}= n \int_{E_1}^{E_2} \frac{\partial \Im \wh{m}_t (z)}{\partial t}{\rm d} x= \frac{n}{p} \int_{E_1}^{E_2} \frac{\partial \Im \mathrm{Tr}\wh{G}_t (z)}{\partial t}{\rm d} x. 
\end{align*}
Considering that $|E_1|, |E_2|\leq N^{-\frac23+\varepsilon}$, it suffices to show 
\begin{align}
 \Big|\frac{\partial \mathrm{Tr}\wh{G}_t (z)}{\partial t}\Big|\prec  n^{\frac{2}{3}-\delta} \label{17112903}
\end{align}
 for all $x\in [E_1,E_2]$. 
From the definitions in (\ref{17112902}), we have 
\begin{align*}
\frac{\partial \mathrm{Tr}\wh{G}_t}{\partial t}=-\mathrm{Tr}\Big(\wh{G}_t\big((HU'+UH')+2t(HH'-\frac13)  \big) \wh{G}_t\Big).
\end{align*}
Hence, for (\ref{17112903}),  it suffices to show  the following estimates hold for all $x\in [E_1,E_2]$:
\begin{align}
&\Big|\mathrm{Tr} \big(HU' \wh{G}_t^2\big)\Big|\prec n^{\frac{2}{3}-\delta}, \qquad \Big|\big(\mathrm{Tr}UH' \wh{G}_t^2\big)\Big|\prec n^{\frac{2}{3}-\delta}, \nonumber\\
 &\Big|\mathrm{Tr}\big((HH'-\frac13 ) \wh{G}_t^2\big)\Big|\prec n^{\frac{2}{3}-\delta}. \label{17100901}
\end{align}

We start with  the first estimate in (\ref{17100901}). The other two can be derived similarly. Let
\begin{align}
\mc{P}:= \mathrm{Tr}\big(HU' \wh{G}_t^2\big), \qquad \mf{m}^{(k,\ell)}:= \mc{P}^k\overline{\mc{P}}^\ell.  \label{18021820}
\end{align}  
Our aim is to establish the following recursive moment estimate: for any fixed integer $k>0$
\begin{align}
\mathbb{E} \big(\mf{m}^{(k,k)}\big)=  \mathbb{E} \big(\mathfrak{c}_1\mf{m}^{(k-1,k)}\big)+\mathbb{E} \big( \mathfrak{c}_2\mf{m}^{(k-2,k)}\big)
+\mathbb{E} \big(\mathfrak{c}_3\mf{m}^{(k-1,k-1)}\big) \label{17100902}
\end{align}
for some random quantities $\mathfrak{c}_i, i=1,2,3$ which satisfy 
\begin{align}
&|\mathfrak{c}_1|\prec n^{\frac23-\delta}, \qquad |\mathfrak{c}_2|\prec n^{\frac43-2\delta}, \qquad |\mathfrak{c}_3|\prec n^{\frac43-2\delta}, \label{18021810} \\
&\mathbb{E}|\mathfrak{c}_1|^{2k}\prec n^{2k(\frac23-\delta)}, \qquad \mathbb{E}|\mathfrak{c}_2|^k\prec n^{2k(\frac23-\delta)}, \qquad \mathbb{E}|\mathfrak{c}_3|^k\prec n^{2k(\frac23-\delta)}.  \label{18021811}
\end{align}
Assuming (\ref{17100902}), by Young's inequality, we have for any given small $\varepsilon$
\begin{align*}
\mathbb{E} \big(\mf{m}^{(k,k)}\big)\leq 3\frac{1}{2k}n^{2k\varepsilon} n^{2k(\frac23-\delta)}+3\frac{2k-1}{2k} n^{-\frac{2k\varepsilon}{2k-1}} \mathbb{E} \big(\mf{m}^{(k,k)}\big).
\end{align*}
Since $k$ can be any large (but fixed) positive integer, we can conclude the first estimate in (\ref{17100901}) by applying Markov's inequality.  The above  strategy of recursive moment estimate is inspired by a similar idea used in \cite{LS2}. 

Hence, what remains is to prove (\ref{17100902}). In the sequel, for brevity, we only keep tracking the bounds in (\ref{18021810}). Those in (\ref{18021811}) will follow easily from (\ref{18021810}),  the deterministic bounds of the entries of $G$ and $U$, together with the Gaussian tail of the entries in $H$. 
To this end, we first use the integration by parts formula for Gaussian random variable
\begin{align}
\mathbb{E} \big(\mf{m}^{(k,k)}\big) &= \mathbb{E}\Big(\mathrm{Tr}HU' \wh{G}_t^2 \mf{m}^{(k-1,k)}\Big)=\sum_{a,(ij)}\mathbb{E} \Big(h_{a, (ij)}\big(U' \wh{G}_t^2\big)_{(ij),a} \mf{m}^{(k-1,k)}\Big)\nonumber\\
&=\frac{1}{3M} \sum_{a,(ij)} \mathbb{E} \Big(\frac{\partial \big(U' \wh{G}_t^2\big)_{(ij),a} }{\partial h_{a, (ij)}}\mf{m}^{(k-1,k)}\Big)\nonumber\\
&\qquad+\frac{k-1}{3M} \sum_{a, (ij)} \mathbb{E}\Big(\big(U' \wh{G}_t^2\big)_{(ij),a} \frac{\partial \mc{P}}{\partial h_{a, (ij)}} \mf{m}^{(k-2,k)} \Big)\nonumber\\
&\qquad +\frac{k}{3M} \sum_{a, (ij)} \mathbb{E}\Big(\big(U' \wh{G}_t^2\big)_{(ij),a} \frac{\partial \overline{\mc{P}}}{\partial h_{a, (ij)}} \mf{m}^{(k-1,k-1)} \Big). \label{18021833}
\end{align}
Here we use the notation $\sum_{a,(ij)}$ to represent the sum over $a\in \llbracket 1, p\rrbracket, 1\leq i<j\leq n$. 
Hence, to establish (\ref{17100902}), it suffices  to show 
\begin{align}
&\frac{1}{M} \sum_{a, (ij)} \frac{\partial \big(U' \wh{G}_t^2\big)_{(ij),a} }{\partial h_{a, (ij)}}=O_\prec(n^{\frac23-\delta}), \nonumber\\
& \frac{1}{M} \sum_{a, (ij)}  \big(U' \wh{G}_t^2\big)_{(ij),a} \frac{\partial \mc{P}}{\partial h_{a, (ij)}}= O_\prec(n^{\frac43-2\delta}), \nonumber\\
&  \frac{1}{M} \sum_{a, (ij)}  \big(U' \wh{G}_t^2\big)_{(ij),a} \frac{\partial \overline{\mc{P}}}{\partial h_{a, (ij)}}= O_\prec(n^{\frac43-2\delta}).  \label{17100910}
\end{align}
The proofs of the last two estimates are similar. Hence, we only show the details of the proofs for  the first two estimates  above. Set $\wh{\Theta}_t:= U+tH$. It is easy to obtain from  (\ref{17112902}) that  
\begin{align*}
\frac{\partial \wh{G}_t}{\partial h_{a,(ij)}}=-t\wh{G}_t\big(E_{a,(ij)}\wh{\Theta}_t'+\wh{\Theta}_t (E_{a,(ij)})'\big)\wh{G}_t,
\end{align*}
where we use the notation $E_{a,(ij)}$ to denote the $p\times M$ matrix whose $(a,(ij))$-th entry is $1$ and all the other entries are $0$. 
Then, it is easy to check
\begin{align*}
\frac{\partial \big(U' \wh{G}_t^2\big)_{(ij),a} }{\partial h_{a, (ij)}}
&= -t\big(U'\wh{G}_t\big)_{(ij),a} \big(\wh{\Theta}_t' \wh{G}_t^2\big)_{(ij),a}-t\big(U'\wh{G}_t\wh{\Theta}_t\big)_{(ij)(ij)} \big(\wh{G}^2_t\big)_{aa}\nonumber\\
&\qquad -t \big(U' \wh{G}_t^2\big)_{(ij),a} \big(\wh{\Theta}_t'\wh{G}_t\big)_{(ij),a}-t\big(U' \wh{G}_t^2\wh{\Theta}_t\big)_{(ij)(ij)} \big(\wh{G}_t\big)_{aa},
\end{align*}  
and 
\begin{align*}
\frac{\partial \mc{P}}{\partial h_{a, (ij)}}
=& \big(U' \wh{G}_t^2\big)_{(ij),a}- t\big(\wh{\Theta}_t' \wh{G}_t HU' \wh{G}_t^2\big)_{(ij),a}- t\big(\wh{G}_t HU' \wh{G}_t^2\wh{\Theta}_t\big)_{a,(ij)}\nonumber\\
&- t\big(\wh{\Theta}_t' \wh{G}_t^2 HU' \wh{G}_t\big)_{(ij),a}- t\big(\wh{G}_t^2 HU' \wh{G}_t\wh{\Theta}_t\big)_{a,(ij)}.
\end{align*}

Consequently, we have 
\begin{align}
\frac{1}{M} \sum_{a, (ij)} \frac{\partial \big(U' \wh{G}_t^2\big)_{(ij),a} }{\partial h_{a, (ij)}}= &  
- \frac{t}{M} \mr{Tr} 
\wh{G}_t^2\wh{\Theta}_tU'\wh{G}_t- \frac{t}{M} \mr{Tr}\wh{\Theta}_tU'\wh{G}_t \mr{Tr}\wh{G}^2_t\nonumber\\
&- \frac{t}{M} \mr{Tr} 
\wh{G}_t\wh{\Theta}_tU'\wh{G}_t^2- \frac{t}{M} \mr{Tr}\wh{\Theta}_tU'\wh{G}_t^2 \mr{Tr}\wh{G}_t, \label{17110501}
\end{align}
and 
\begin{align}
 &\frac{1}{M} \sum_{a, (ij)}  \big(U' \wh{G}_t^2\big)_{(ij),a} \frac{\partial \mc{P}}{\partial h_{a, (ij)}}\nonumber\\
&=\frac{1}{M} \mr{Tr} \wh{G}_t^2UU' \wh{G}_t^2-\frac{t}{M} \mr{Tr} \wh{G}_t^2U\wh{\Theta}_t' \wh{G}_t HU' \wh{G}_t^2- \frac{t}{M} \mr{Tr} \wh{G}_t HU' \wh{G}_t^2\wh{\Theta}_tU' \wh{G}_t^2\nonumber\\
&\qquad -\frac{t}{M} \mr{Tr} \wh{G}_t^2U\wh{\Theta}_t' \wh{G}_t^2 HU' \wh{G}_t-\frac{t}{M} \mr{Tr} \wh{G}_t^2 HU' \wh{G}_t\wh{\Theta}_t U' \wh{G}_t^2. \label{17110502}
\end{align}
Now we claim that
\begin{align}
\|HU'\|\prec n^{-\frac12}, \qquad \|UU'\|\prec 1. \label{171104100}
\end{align}
To see the first estimate, we first notice that
\begin{align}
\|HU'UH'\|=\|HT' V_{\Cdot[2]}' V_{\Cdot[2]} T H'\|\prec  \frac{1}{n}\|HT' T H'\|, \label{171104102}
\end{align}
where we use the notation $V_{\Cdot[2]}$ to represent the $p\times n$ matrix with $\mb{v}_{i\Cdot[2]}$ as its $i$-th row.  In the last step, we use the fact that $V_{\Cdot[2]}'V_{\Cdot[2]}$ is a sample covariance matrix with entries (in $V_{\Cdot[2]}$) of order $\frac{1}{\sqrt{M}}\sim \frac{1}{n}$, which implies that $\|V_{\Cdot[2]}' V_{\Cdot[2]}\|\prec \frac1n$ (c.f. Proposition \ref{pro.large deviation for operator norm}).  Further, 
observe that $T'T$ is a rank $n$ matrix with $\|T'T\|=\frac13\|\Gamma\|=O(n)$. Writing the spectral decomposition as $T'T:= O_T' \Lambda_TO_T$, we have the fact that 
\begin{align}
\|HT' T H'\|\prec n\|HO_T'(I_n\oplus 0)O_T H'\|\stackrel{d}= n\|\mc{H}\mc{H}'\|, \label{171104101}
\end{align}
where $\mc{H}$ is a $p\times n$ matrix with i.i.d. $N(0,\frac1M)$ entries.  Then the first estimate in (\ref{171104100}) follows simply from the fact that $\|\mc{H}\mc{H}'\|\prec \frac{1}{n}$,  (\ref{171104101}), and (\ref{171104102}). 
The second estimate in (\ref{171104100}) is easy to see from the fact that $\|U'U\|=\|T' V_{\Cdot[2]}' V_{\Cdot[2]} T \|\prec \frac{1}{n} \|T' T \|\prec 1$.  Then, using (\ref{171104100}) to (\ref{17110501}), we have
\begin{align*}
&\bigg|\frac{1}{M} \sum_{a, (ij)} \frac{\partial \big(U' \wh{G}_t^2\big)_{(ij),a} }{\partial h_{a, (ij)}}\bigg| \prec \frac{1}{M} \mr{Tr} |\wh{G}_t|^3+\frac{1}{M} \mr{Tr} |\wh{G}_t|^2\mr{Tr}|\wh{G}_t|\nonumber\\
&\leq \frac{1}{M\eta^2} \Im \mr{Tr} \wh{G}_t+\frac{1}{M\eta} \Im \mr{Tr} \wh{G}_t \mr{Tr} |\wh{G}_t| \prec n^{\frac{1}{3}+\varepsilon}, 
\end{align*}
where in the last step we use the local laws Lemma \ref{lem. local law for K hat t} and Lemma \ref{lem.properties of m}.

Similarly, using (\ref{17110502}) and (\ref{171104100}), we have 
\begin{align}
&\bigg|\frac{1}{M} \sum_{a, (ij)}  \big(U' \wh{G}_t^2\big)_{(ij),a} \frac{\partial \mc{P}}{\partial h_{a, (ij)}}\bigg|\prec  \frac{1}{M} \mr{Tr} |\wh{G}_t|^4+\frac{1}{M\sqrt{n}} \mr{Tr} |\wh{G}_t|^5\nonumber\\
&\leq  \frac{1}{M\eta^3} \Im \mr{Tr} \wh{G}_t+\frac{1}{M\sqrt{n}\eta^4} \Im \mr{Tr} \wh{G}_t \prec n^{\frac{5}{6}+\varepsilon},  \label{171129100}
\end{align}
where again  in the last step we use the local laws Lemma \ref{lem. local law for K hat t} and Lemma \ref{lem.properties of m}. 
Hence, we conclude the proof of the first two estimates in (\ref{17100910}). The last one can be proved similarly to the second one, we thus omit the details. Therefore, we get (\ref{17100902}). Then, by Young's inequality, we can get the first estimate in (\ref{17100901}). The second estimate in (\ref{17100901}) can be proved analogously and thus we omit the details.   For the last estimate in (\ref{17100901}), we can also use the same strategy, and the details of its proof  is stated in the supplementary material \cite{BaoSupp}.  Therefore, we completed the proof of Proposition \ref{lem.17100802}. 
\end{proof}

\section{Edge universality for $K$} \label{s. universality for K}
With Propositions \ref{lem.17100801} and \ref{lem.17100802} , we can now prove Theorem \ref{thm. universality for Kendall} and Corollary \ref{cor. TW for K}. 

\begin{proof}[Proof of Theorem \ref{thm. universality for Kendall}] Using Propositions \ref{lem.17100801} and \ref{lem.17100802}, we obtain
\begin{align} 
&\Big| \mathbb{E} F\Big( n \int_{E_1}^{E_2} \Im m (x+{\lambda}_{+,c_n}+\mathrm{i}\eta){\rm d} x\Big)\nonumber\\
&\qquad\qquad-\mathbb{E} F\Big( n \int_{E_1}^{E_2} \Im \wt{m} (x+{\lambda}_{+,c_n}+\mathrm{i}\eta){\rm d} x\Big)\Big|\leq n^{-\delta},  \label{17120261}
\end{align}
where $F, E_1, E_2$ and $\eta$ satisfy the assumptions in Proposition \ref{lem.17100801}.  Similar to the proof of Theorem 1.1 in \cite{PY}, one can show by using (\ref{17120261}) and the local laws that 
\begin{align}
&\mathbb{P} \Big(n^{\frac23}(\lambda_1(K)- {\lambda}_{+,c_n})\leq s-n^{-\varepsilon}\Big)-n^{-\delta}
\leq \mathbb{P} \Big(n^{\frac23}(\lambda_1(\wt{K})- \lambda_{+,c_n})\leq s\Big)\nonumber\\
&\qquad\qquad\leq  \mathbb{P} \Big( n^{\frac23}(\lambda_1(K)- {\lambda}_{+,c_n})\leq s+n^{-\varepsilon}\Big)+n^{-\delta} \label{17120262}
\end{align} 

Further, we observe that 
$
UU'=V_{\Cdot[2]}TT'V_{\Cdot[2]}. 
$
In addition, we notice that 
$
TT'=nI_n-\mathbf{1}\mathbf{1}'.
$
Denoting by $\mc{V}:=\sqrt{\frac{3}{2}(n-1)}V_{\Cdot[2]}$, and $\Sigma=I_n-\frac{1}{n}\mathbf{1}\mathbf{1}'$, we can write 
\begin{align}
\wt{K}= UU'+\frac{1}{3}I_p=\frac{2n}{3(n-1)} \mc{V}\Sigma\mc{V}'+\frac{1}{3}I_p.  \label{171129110}
\end{align}
It is known from Theorem 2.7 of \cite{BKYY} that  the largest eigenvalues of $\mc{V}\Sigma\mc{V}'$  differ from the corresponding ones  of $\mc{V}\mc{V}'$ only by $O_\prec(\frac{1}{n})$.  This together with Theorem 1.1 in \cite{PY} leads to
\begin{align}
\mathbb{P} \Big(\frac32n^{\frac23}(\lambda_1(\wt{K})- {\lambda}_{+,c_n})  &\leq s-n^{-\varepsilon}\Big)-n^{-\delta}\leq \mathbb{P} \Big(n^{\frac23}(\lambda_1(Q)- d_{+,c_n})\leq s\Big)\nonumber\\
&\leq  \mathbb{P} \Big(\frac32 n^{\frac23}(\lambda_1(\wt{K})- {\lambda}_{+,c_n})\leq s\Big)+n^{-\delta}. \label{17120263}
\end{align}
Combining (\ref{17120262}) and (\ref{17120263}) we obtain (\ref{17120265}). This concludes the proof. 
\end{proof}

\begin{proof}[Proof of Corollary \ref{cor. TW for K}] The conclusion follows directly from Theorem  \ref{thm. universality for Kendall} and the Tracy-Widom limit for $\lambda_1(Q)$ (c.f \cite{Johnstone}). 
\end{proof}

\section*{Acknowledgement} The author would like to thank Jiang Hu and Wang Zhou for helpful discussion.

\newpage

\section*{Supplementary material}

In this supplementary material,  we provide the proofs of some Propositions, Lemmas, and also state more simulation results. In Section S.1, we state the proofs of   Proposition \ref{lem. large deviation linear part},  Proposition \ref{lem. large deviation nonlinear part}, and  Corollary \ref{cor. large devi}. In Section S.2, we state the proof of Proposition \ref{lem.weak law for R}.  Section S.3 is devoted to the proofs of Lemmas \ref{lem. local law for K gamma}, \ref{lem. technical estimates} and \ref{lem. local law for K hat t}, and also the proof of the last estimate in (\ref{17100901}). In Section S.4, we collect some basic technical tools, and in Section S.5, we present more simulation results.

\
\
\
\

\noindent {\bf S1: Proofs of the large deviation estimates}

In this section, we state the proofs of Propositions \ref{lem. large deviation linear part} and \ref{lem. large deviation nonlinear part}, and also the proof of Corollary \ref{cor. large devi}. 

We first collect some technical results on Hoeffding decomposition in the following lemma.
\begin{lem} \label{lem.technical lem on Hoeffding} With the notations introduced in (\ref{170920100}) and (\ref{17101820}), we have 
\begin{align}
&\mathbb{E}(\bar{v}_{k,(ij)}|w_{ki})=\mathbb{E}(\bar{v}_{k,(ij)}|w_{kj})=0, \qquad i\neq j,  \label{17111970}\\
& \mathbb{E}\big(\bar{v}_{k,(ij)}^2|w_{ki}\big)=  \mathbb{E}\big(\bar{v}_{k,(ij)}^2|w_{kj}\big)=\frac13, \qquad i\neq j,
 \label{17102801} \\
 & \mathbb{E} (v_{k,(i\cdot)}\bar{v}_{k,(ij)}|w_{kj})= \frac{1}{2}\big(\frac13-v_{k,(j\cdot)}^2\big), \qquad i\neq j.  \label{171128153}
\end{align}
\end{lem}

\begin{proof}[Proof of  Lemma \ref{lem.technical lem on Hoeffding}] First, (\ref{17111970}) follows easily from the definitions in (\ref{17101820}) and (\ref{170920100}), and also the fact (\ref{17092101}).

Next, we prove  (\ref{17102801}).  First, by the trivial fact $|v_{k,(ij)}|=1$ and (\ref{17092150}), we have 
\begin{align}
\mathbb{E} \big(\bar{v}_{k,(ij)}^2|w_{kj}\big)&=\mathbb{E}\big( (v_{k,(ij)}-v_{k,(i\cdot)}+v_{k,(j\cdot)})^2|w_{kj}\big)\nonumber\\
&= \frac43-v_{k,(j\cdot)}^2-2\mathbb{E} \big(v_{k,(ij)}v_{k,(i\cdot)} |w_{kj}\big). \label{17102804}
\end{align}
From the definition (\ref{17091820}), we further observe that 
\begin{align}
&\mathbb{E} \big(v_{k,(ij)}v_{k,(i\cdot)} |w_{kj}\big) = \mathbb{E}\big(v_{k,(i\cdot)}\mathbbm{1}(w_{ki}>w_{kj})|w_{kj}\big)-\mathbb{E}\big(v_{k,(i\cdot)}\mathbbm{1}(w_{ki}<w_{kj})|w_{kj}\big)\nonumber\\
&=  \mathbb{E}\big(v_{k,(i\cdot)}\mathbbm{1}(v_{k,(i\cdot)}>v_{k, (j\cdot)})|w_{kj}\big)-\mathbb{E}\big(v_{k,(i\cdot)}\mathbbm{1}(v_{k, (i\cdot)}<v_{k, (j\cdot)})|w_{kj}\big)\nonumber\\
&=\frac12(1-v_{k, (j\cdot)}^2), \label{17102803}
\end{align}
where in the first step above we use the fact (\ref{17092101}) and the monotonicity of $F_k$, and in the second step we  use the fact that $v_{k,(i\cdot)}$ is uniformly distributed on $[-1,1]$.  Plugging (\ref{17102803}) into (\ref{17102804}) yields  (\ref{17102801}).

Further, using Hoeffding decomposition again, we observe that  
\begin{align*}
&\mathbb{E} \big(v_{k,(ij)}v_{k,(i\cdot)} |w_{kj}\big)= \mathbb{E} \big((v_{k,(i\cdot)}-v_{k,(j\cdot)}+\bar{v}_{k,(ij)})v_{k,(i\cdot)} |w_{kj}\big)\nonumber\\
&=  \mathbb{E} v_{k,(i\cdot)}^2+\mathbb{E} \big(\bar{v}_{k,(ij)})v_{k,(i\cdot)} |w_{kj}\big)=\frac13+\mathbb{E} \big(\bar{v}_{k,(ij)}v_{k,(i\cdot)} |w_{kj}\big),
\end{align*}
which together with (\ref{17102803}) leads to (\ref{171128153}).  This completes the proof of Lemma \ref{lem.technical lem on Hoeffding}. 
\end{proof}

\begin{proof}[Proof of Proposition \ref{lem. large deviation linear part}] First, according to the definitions in (\ref{171128200}), (\ref{17101010}) and (\ref{17112630}), 
we can write
\begin{align}
\mb{u}_k=\mb{v}_{k,\Cdot[2]} T, \label{17102202}
\end{align}
where we introduce the notation 
\begin{align}
\mb{v}_{k,\Cdot[2]}:= {\frac{1}{\sqrt{M}}}(v_{k,(1\cdot)}, \ldots, v_{k,(n\cdot)})\in \mathbb{R}^n.
\end{align}
Then, using (\ref{171128201}), it is easy to see that 
\begin{align*}
\mathbb{E} \mb{u}'_k\mb{u}_k=\mathbb{E}  T' \mb{v}_{k,\Cdot[2]}'\mb{v}_{k,\Cdot[2]}T=\frac{1}{3M}T'T= \frac{1}{M}\Gamma, 
\end{align*}
where we use (\ref{17120301}). 
Consequently, (\ref{17092110}) follows  from 
\begin{align*}
&\mathbb{E}\mb{u}_k B\mb{u}_k'=\text{Tr} B \mathbb{E} \mb{u}_k'\mb{u}_k
=\frac{1}{M}\text{Tr} B\Gamma.
\end{align*}
Further, using (\ref{17102202}) again, we can write 
\begin{align*}
\mb{u}_k\mathbf{a}'= \mb{v}_{k,\Cdot[2]} T\mb{a}', \qquad   \mb{u}_k B\mb{u}_k'= \mb{v}_{k,\Cdot[2]} TBT'   \mb{v}_{k,\Cdot[2]}'. 
\end{align*}
Using the randomness of $\mb{v}_{k,\Cdot[2]}$, we can get  (\ref{17092170}) and (\ref{17092130}) from the large deviation estimate of random vector with independent entries (c.f. Corollary B.3 of  \cite{EYY12} for instance), and also the fact (\ref{17110901}). 

This completes the proof of Proposition \ref{lem. large deviation linear part}. 
\end{proof}

\begin{proof}[Proof of Proposition \ref{lem. large deviation nonlinear part}] In this proof, we fix a $k\in \llbracket 1, p\rrbracket$.  Recall  the definitions in (\ref{17091820}), (\ref{170920100}) and (\ref{17101820}).  We first define the filtration 
\begin{align}
\mathcal{F}_0=\emptyset, \qquad \mathcal{F}_\ell:=\sigma(w_{k1}, \ldots, w_{k\ell}), \qquad \ell\in \llbracket1, n\rrbracket,  \label{17092201}
\end{align}
where we omit the dependence on $k$ from the above notations. 
We first prove (\ref{17101050}).
Define the martingale difference
\begin{align}
\mathcal{M}_\ell:=&\mathbb{E}\big(\bar{\mb{v}}_k\mathbf{a}'|\mathcal{F}_\ell \big)-\mathbb{E}\big(\bar{\mb{v}}_k\mathbf{a}'|\mathcal{F}_{\ell-1} \big). \label{17092172}
\end{align}
Using (\ref{17111970}), it is easy to check
\begin{align*}
\mathcal{M}_\ell=& \frac{1}{\sqrt{M}}\sum_{i=1}^{\ell-1} a_{(i\ell)}\Big(\mathbb{E}\big( \bar{v}_{k,(i\ell)}|\mathcal{F}_\ell \big)-\mathbb{E}\big(\bar{v}_{k,(i\ell)}|\mathcal{F}_{\ell-1} \big)\Big)\nonumber\\
&+ \frac{1}{\sqrt{M}}\sum_{j=\ell+1}^n a_{(\ell j)}\Big(\mathbb{E}\big( \bar{v}_{k,(\ell j)}|\mathcal{F}_\ell \big)-\mathbb{E}(\bar{v}_{k,(\ell j)}|\mathcal{F}_{\ell-1} \big)\Big)= \frac{1}{\sqrt{M}}\sum_{i=1}^{\ell-1} a_{(i\ell)} \bar{v}_{k,(i\ell)}.
\end{align*}
Further, we define the following filtration  for a given $\ell$,
\begin{align}
\mc{F}_{\gamma, \ell}=\sigma(w_{k1}, \ldots, w_{k\gamma}, w_{k\ell}), \qquad \gamma\in \llbracket1,  \ell-1\rrbracket. \label{17112710}
\end{align}

Observe that for each given $\ell$, the sequence $\{\frac{1}{\sqrt{M}}\sum_{i=1}^{\gamma} a_{(i\ell)} \bar{v}_{k,(i\ell)}\}_ {\gamma=1}^{ \ell-1}$ itself is a martingale w.r.t. the filtration $\{\mc{F}_{\gamma, \ell}\}_{\gamma=1}^{\ell-1}$, according to the fact (\ref{17111970}). Using Burkholder inequality and the boundedness of $\bar{v}_{k,(i\ell)}$'s, we have  for any integer $ q\geq 2$
\begin{align*}
\mathbb{E}|\mathcal{M}_\ell|^q\leq (Cq)^{\frac{3q}{2}} \Big(\frac{1}{M}\sum_{i=1}^{\ell-1} |a_{(i\ell)}|^2\Big)^{\frac{q}{2}}.
\end{align*}
Hence, we have
\begin{align}
|\mathcal{M}_\ell|\prec \sqrt{ \frac{1}{{M}}\sum_{i=1}^{\ell-1} |a_{(i\ell)}|^2}. \label{17092180}
\end{align}
Then, using Burkholder inequality again, we have 
\begin{align}
\mathbb{E}\Big|\sum_{\ell} \mathcal{M}_\ell\Big|^q\leq (Cq)^{\frac{3q}{2}} \mathbb{E} \Big(\sum_{\ell} |\mathcal{M}_\ell|^2\Big)^{\frac{q}{2}}. \label{17101060} 
\end{align}
From  (\ref{17092180}), we see that 
\begin{align}
\sum_{\ell=1}^n |\mathcal{M}_\ell|^2\prec \frac{1}{M} \sum_{i<j} |a_{(ij)}|^2=\frac{\|\mb{a}\|^2}{M}. \label{17101061}
\end{align}
Also notice that by the deterministic boundedness of $\bar{v}_{k,(ij)}$ and Cauchy-Schwarz, we can  get the deterministic bound $|\mc{M}_\ell|\leq C\sqrt{\sum_{i=1}^{\ell-1} |a_{(i\ell)}|^2/n} $. Plugging in this deterministic bound together with the stochastic bound (\ref{17101061}) to (\ref{17101060}), in light of Definition \ref{definition of stochastic domination}, we can conclude
\begin{align*}
\mathbb{E}\Big|\sum_{\ell} \mathcal{M}_\ell\Big|^q\prec (Cq)^{\frac{3q}{2}}  \Big(\frac{\|\mb{a}\|^2}{M}\Big)^{\frac{q}{2}}.
\end{align*}
Then, by Markov inequality, we obtain  (\ref{17101050}). 

Observe that normally the stochastic bound like (\ref{17101061}) cannot  directly imply the bound for  moments such as  the RHS  of (\ref{17101060}). But one can indeed do so  if there is  also a crude but deterministic bound for the random variable. This fact has been discussed below Definition \ref{definition of stochastic domination}.
And this will be always the case in the remaining proof. Hence, without further justification, we will regard the stochastic bounds in the sequel as deterministic and plug them into the  moment estimates directly.

Next, we prove (\ref{17101051}). Recall the filtration (\ref{17092201}).  We set the martingale difference
  \begin{align}
  \mc{L}_\ell:= \mathbb{E} \big( \mb{u}_k B\bar{\mb{v}}_k'|\mathcal{F}_\ell \big)-  \mathbb{E} \big( \mb{u}_k B\bar{\mb{v}}_k'|\mathcal{F}_{\ell-1} \big). \label{17102230}
  \end{align}
Using (\ref{17102202}), we have 
\begin{align}
 \mb{u}_k B\bar{\mb{v}}_k'= \mb{v}_{k,\Cdot[2]} TB \bar{\mb{v}}_k'= \frac{1}{M}\sum_{a}\sum_{i<j} (TB)_{a,(ij)} v_{k,(a\cdot)} \bar{v}_{k,(ij)}.   \label{17112701}
\end{align}
According to (\ref{17112701}) and the definition in (\ref{17102230}), by  Lemma \ref{lem.technical lem on Hoeffding}, it is not difficult to derive
\begin{align*}
\mc{L}_\ell =&  \frac{1}{M}\sum_{a}\sum_{i<j} (TB)_{a,(ij)} \Big(\mathbb{E} \big(v_{k,(a\cdot)} \bar{v}_{k,(ij)}| \mathcal{F}_\ell\big)- \mathbb{E} \big(v_{k,(a\cdot)} \bar{v}_{k,(ij)}| \mathcal{F}_{\ell-1}\big)\Big)\nonumber\\
=& \mc{L}_{\ell1}+ \mc{L}_{\ell2}+ \mc{L}_{\ell3}+\mc{L}_{\ell4}, 
\end{align*}
where 
\begin{align*}
&\mc{L}_{\ell1}:=  \frac{1}{M}\sum_{i=1}^{\ell-1} (TB)_{\ell,(i\ell)} \Big(v_{k,(\ell\cdot)} \bar{v}_{k,(i\ell)}- \frac12(v_{k,(i\cdot)}^2-\frac{1}{3})\Big),\nonumber\\
& \mc{L}_{\ell2}:= \frac{1}{M}\sum_{j>i=1}^{\ell-1} (TB)_{\ell,(ij)} v_{k,(\ell\cdot)} \bar{v}_{k,(ij)},\nonumber\\
 &  \mc{L}_{\ell3}:= \frac{1}{2M}\sum_{j=\ell+1}^n (TB)_{j,(\ell j)} (v_{k,(\ell\cdot)}^2-\frac{1}{3}),\nonumber\\
 & \mc{L}_{\ell4}:= \frac{1}{M}\sum_{a=1}^{\ell-1}\sum_{i=1}^{\ell-1} (TB)_{a,(i\ell)} v_{k,(a\cdot)} \bar{v}_{k,(i\ell)}.
\end{align*}
Here we use the notation $\sum_{j>i=1}^{\ell-1}$ to represent the double sum $\sum_{i=1}^{\ell-1}\sum_{j=i+1}^{\ell-1}$ for short. 
Using (\ref{17101050}) to the sum $\sum_{i=1}^{\ell-1} (TB)_{\ell,(i\ell)} \bar{v}_{k,(i\ell)}$ and the large deviation for the linear form of i.i.d. random variables (c.f. Corollary B.3 of  \cite{EYY12} for instance) to the sum $\sum_{i=1}^{\ell-1} (TB)_{\ell,(i\ell)}(v_{k,(i\cdot)}^2-\frac{1}{3})$, we get
\begin{align}
|\mc{L}_{\ell1}|\prec  \frac{1}{M}\sqrt{\sum_{i=1}^{\ell-1} \big|(TB)_{\ell,(i\ell)}\big|^2}.  \label{17112730}
\end{align}
Again, using (\ref{17101050}) to $\sum_{j>i=1}^{\ell-1} (TB)_{\ell,(ij)}\bar{v}_{k,(ij)}$, and also using the boundedness of  $v_{k,(\ell\cdot)}$, we can analogously get 
\begin{align}
|\mc{L}_{\ell2}|\prec  \frac{1}{M}\sqrt{\sum_{j>i=1}^{\ell-1} \big|(TB)_{\ell,(ij)}\big|^2}. \label{17112731}
\end{align}
For $\mc{L}_{\ell 3}$, we use the boundedness of $v_{k,(\ell\cdot)}$, and get 
\begin{align}
|\mc{L}_{\ell3} |\leq \frac{1}{M} \Big|\sum_{j=\ell+1}^n (TB)_{j,(\ell j)}\Big|. \label{17112732}
\end{align}
To bound $\mc{L}_{\ell 4}$, we do another martingale decomposition. Recall the filtration defined in (\ref{17112710}).  We define 
\begin{align}
\mc{N}_{\gamma, \ell}=\mathbb{E} \big(\mc{L}_{\ell 4}|\mc{F}_{\gamma, \ell}\big)-  \mathbb{E} \big(\mc{L}_{\ell 4}|\mc{F}_{\gamma-1, \ell}\big), \qquad \gamma\in \llbracket 1, \ell-1\rrbracket. \label{17112715}
\end{align}  

In light of the definition (\ref{17112715}), and (\ref{17111970}), it is not difficult to check that 
\begin{align*}
\mc{N}_{\gamma, \ell}
=&  \frac{1}{M} (TB)_{\gamma,(\gamma\ell)} \Big(v_{k,(\gamma\cdot)} \bar{v}_{k,(\gamma\ell)}- \frac{1}{2}(\frac13-v_{k,(\ell, \cdot)}^2)\big)\Big)\nonumber\\
&+ \frac{1}{M}\sum_{i=1}^{\gamma-1} (TB)_{\gamma,(i\ell)} v_{k,(\gamma \cdot)} \bar{v}_{k,(i\ell)}+ \frac{1}{M}\sum_{a=1}^{\gamma-1} (TB)_{a,(\gamma\ell)} v_{k,(a\cdot)} \bar{v}_{k,(\gamma\ell)}
\end{align*}
Using  (\ref{17101050}) to the sum  $\sum_{i=1}^{\gamma-1} (TB)_{\gamma,(i\ell)}  \bar{v}_{k,(i\ell)}$ and the large deviation for the linear form i.i.d. random variables (c.f. Corollary B.3 of  \cite{EYY12} for instance) to the sum $\sum_{a=1}^{\gamma-1} (TB)_{a,(\gamma\ell)} v_{k,(a\cdot)}$, we can conclude the bound 
\begin{align*}
|\mc{N}_{\gamma, \ell}|\prec  \frac{1}{M} |(TB)_{\gamma,(\gamma\ell)}|+ \frac{1}{M}\sqrt{\sum_{i=1}^{\gamma-1} \big|(TB)_{\gamma,(i\ell)}\big|^2}+  \frac{1}{M}\sqrt{\sum_{a=1}^{\gamma-1} \big|(TB)_{a,(\gamma\ell)}\big|^2}. 
\end{align*}
Since  $\mc{L}_{\ell 4}=\sum_{\gamma=1}^{\ell-1} \mc{N}_{\gamma,\ell}$ is a martingale, using Burkholder inequality we have  
\begin{align*}
\mathbb{E}|\mc{L}_{\ell 4}|^{q}\leq (Cq)^{\frac{3q}{2}} \mathbb{E}\Big(\sum_{\gamma=1}^{\ell-1} |\mc{N}_{\gamma,\ell}|^2\Big)^{\frac{q}{2}}\prec  (Cq)^{\frac{3q}{2}} \Big(\frac{1}{M^2}\sum_{a=1}^{\ell-1} \sum_{i=1}^{\ell-1} |(TB)_{a,(i\ell)}|^2 \Big)^{\frac{q}{2}}.
\end{align*}
By Markov inequality, we then have 
\begin{align}
|\mc{L}_{\ell 4}|\prec \sqrt{\frac{1}{M^2}\sum_{a=1}^{\ell-1} \sum_{i=1}^{\ell-1} |(TB)_{a,(i\ell)}|^2}.  \label{17112733}
\end{align}

Now, further, since $ \mb{u}_k B\bar{\mb{v}}_k'= \sum_{\ell=1}^n \mc{L}_\ell$ is a martingale, we can again use the Burkholder inequality to get 
\begin{align}
\mathbb{E} |\mb{u}_k B\bar{\mb{v}}_k'|^q\leq (Cq)^{\frac{3q}{2}}  \mathbb{E}\Big( \sum_{\ell=1}^n |\mc{L}_{\ell}|^2\Big)^{\frac{q}{2}}. \label{17112751}
\end{align}
From  (\ref{17112730}), (\ref{17112731}), (\ref{17112732}) and (\ref{17112733}), we have 
\begin{align}
\sum_{\ell=1}^n \mc{L}_{\ell}^2\prec  \frac{1}{M^2} \mr{Tr} (TB)(TB)^*+ \frac{1}{M^2}\sum_{\ell=1}^n \Big|\sum_{j=\ell+1}^n (TB)_{j,(\ell j)}\Big|^2\nonumber\\
\prec   \frac{n}{M^2} \mr{Tr} |B|^2+\frac{1}{M^2}\sum_{\ell=1}^n \Big|\sum_{j=\ell+1}^n (TB)_{j,(\ell j)}\Big|^2.  \label{17112750}
\end{align}
Plugging (\ref{17112750}) into (\ref{17112751}) and using Markov inequality we conclude  (\ref{17101051}).

Next, we prove (\ref{17101052}). We first observe that $\mathbb{E} (\bar{\mb{v}}_k B\bar{\mb{v}}_k')=\frac{1}{3M} \mr{Tr} B$, in light of (\ref{171128201}) and (\ref{17111970}). We then decompose the quadratic form  into four parts
\begin{align}
&\bar{\mb{v}}_k B\bar{\mb{v}}_k'-\mathbb{E} (\bar{\mb{v}}_k B\bar{\mb{v}}_k')= \frac{1}{M}\sum_{i<j} b_{(ij)(ij)}((\bar{v}_{k,(ij)})^2-\frac13)\nonumber\\
&\qquad\qquad+ \frac{1}{M}\sum_{i<j}\sum_{t}\mathbbm{1}(j\neq t) b_{(ij)(it)} \bar{v}_{k,(ij)} \bar{v}_{k, (it)}\nonumber\\
&\qquad\qquad+ \frac{1}{M}\sum_{ s<j }\sum_i \mathbbm{1}(i\neq s)  b_{(ij)(sj)}\bar{v}_{k,(ij)} \bar{v}_{k, (sj)} \nonumber\\
&\qquad\qquad + \frac{1}{M}\sum_{i<j}\sum_{ s<t} \mathbbm{1}(j\neq t) \mathbbm{1}(i\neq s)  b_{(ij)(st)} \bar{v}_{k,(ij)} \bar{v}_{k, (st)}\nonumber\\
&\qquad\qquad=: Z_1+Z_2+Z_3+Z_4. \label{17102805}
\end{align}
In the sequel, we estimate $Z_i, i=1, 2,3,4$  one by one.   With the aid of (\ref{17102801}), we first estimate  $Z_1$. We recall the filtration $\mathcal{F}_\ell$  in (\ref{17092201}), and define 
\begin{align*}
\zeta_{1\ell}:=& \mathbb{E}\big(Z_1|\mc{F}_\ell\big)-\mathbb{E}\big(Z_1|\mc{F}_{\ell-1}\big) \nonumber\\
=&\frac{1}{M}\sum_{j=\ell+1}^n b_{(\ell j)(\ell j)}\Big(\mathbb{E}\big((\bar{v}_{k,(\ell j)})^2|\mathcal{F}_\ell\big)-\mathbb{E}\big((\bar{v}_{k,(\ell j)})^2|\mathcal{F}_{\ell-1}\big) \Big)\nonumber\\
&+\frac{1}{M}\sum_{i=1}^{\ell-1 } b_{(i\ell)(i\ell)}\Big(\mathbb{E}\big((\bar{v}_{k,(i\ell)})^2|\mathcal{F}_\ell\big)-\mathbb{E}\big((\bar{v}_{k,(i\ell)})^2|\mathcal{F}_{\ell-1}\big) \Big)\nonumber\\
=& \frac{1}{M}\sum_{i=1}^{\ell-1 } b_{(i\ell)(i\ell)}\Big((\bar{v}_{k,(i\ell)})^2-\frac13\big),
\end{align*}  
where in the second step we use (\ref{17102801}). Observe that $\{b_{(i\ell)(i\ell)}\Big((\bar{v}_{k,(i\ell)})^2-\frac13\big)\}_{i=1}^{\ell-1}$ is a martingale difference sequence w.r.t. the filtration $\{\mc{F}_{i,\ell}\}_{i=1}^{\ell-1}$ for any given $\ell$, by the fact (\ref{17102801}). Hence, we have   
\begin{align*}
|\zeta_{1\ell}|\prec  \sqrt{\frac{1}{M^2}\sum_{i=1}^{\ell-1 } |b_{(i\ell)(i\ell)}|^2}. 
\end{align*}
which further implies
\begin{align*}
\sum_{\ell}|\zeta_{1\ell}|^2\prec  \frac{1}{M^2}\sum_{i<\ell}  |b_{(i\ell)(i\ell)}|^2. 
\end{align*}
Similar to the proofs for (\ref{17101050}) and (\ref{17101051}), we can then use  Burkholder inequality to conclude
\begin{align}
|Z_{1}|=\big|\sum_{\ell=0}^n \zeta_{1\ell}\big|\prec \sqrt{ \frac{1}{M^2}\sum_{i<\ell} | b_{(i\ell)(i\ell)}|^2}.  \label{17102830}
\end{align}

Next, we show the estimate of $Z_2$.  By definition, we can write 
\begin{align}
Z_2= \frac{1}{M}\sum_i\sum_{{j, t =i+1}}^n \mathbbm{1}(j\neq t) b_{(ij)(it)} \bar{v}_{k,(ij)} \bar{v}_{k, (it)}=:  \sum_iZ_2^{(i)}. \label{17102811}
\end{align}
In the following, we fix an $i$, and estimate one summand $Z_2^{(i)}$. We introduce the filtration 
\begin{align*}
\mathcal{F}^{(i)}_{\ell}:=\sigma (w_{k i}, w_{k,i+1}, \ldots, w_{k,\ell}), \qquad i+1\leq \ell\leq n. 
\end{align*}
Now, we define the martingale difference for $\ell\in\llbracket i+1,n\rrbracket$
\begin{align}
\zeta_{2\ell}^{(i)}:=& \frac{1}{M}\sum_{{j, t =i+1}}^n \mathbbm{1}(j\neq t) b_{(ij)(it)}\Big( \mathbb{E}\big(\bar{v}_{k,(ij)} \bar{v}_{k, (it)}|\mc{F}_\ell^{(i)}\big)- \mathbb{E}\big(\bar{v}_{k,(ij)} \bar{v}_{k, (it)}|\mc{F}_{\ell-1}^{(i)}\big)\Big)\nonumber\\
=&  \frac{1}{M}\bar{v}_{k,(i,\ell)}\Big(\sum_{{ t =i+1}}^{\ell-1} b_{(i\ell)(it)} \bar{v}_{k, (it)}+ \sum_{{ j =i+1}}^{\ell-1}  b_{(ij)(i\ell)}\bar{v}_{k,(ij)} \Big), \label{17112780}
\end{align}
where the second step follows from  (\ref{17111970}). 
Applying (\ref{17101050}), we have 
\begin{align*}
&\Big|\sum_{{ t =i+1}}^{\ell-1} b_{(i\ell)(it)} \bar{v}_{k, (it)}\Big|\prec  \sqrt{\sum_{{ t =i+1}}^{\ell-1} |b_{(i\ell)(it)}|^2}, \nonumber\\
& \Big| \sum_{{ j =i+1}}^{\ell-1}  b_{(ij)(i\ell)}\bar{v}_{k,(ij)} \Big|\prec \sqrt{\sum_{{ j =i+1}}^{\ell-1}  |b_{(ij)(i\ell)}|^2}.
\end{align*}
Then it is elementary to show that 
\begin{align*}
\sum_{\ell} |\zeta_{2\ell}^{(i)}|^2 \prec \frac{1}{M^2} \sum_{j,t=i+1}^{n}  |b_{(ij)(it)}|^2.
\end{align*}
Further, by Burkholder inequality, we get 
\begin{align}
|Z_{2}^{(i)}|=\big|\sum_{\ell} \zeta_{2\ell}^{(i)}\big|\prec  \frac{1}{M}\sqrt{ \sum_{j,t=i+1}^{n}  |b_{(ij)(it)}|^2}.  \label{17102810}
\end{align}
Plugging (\ref{17102810}) into (\ref{17102811}) and using Cauchy-Schwarz inequality, we obtain
\begin{align}
|Z_2|\prec \sqrt{\frac{n}{M^2}\sum_i   \sum_{j,t=i+1}^{n}  |b_{(ij)(it)}|^2}. \label{17102831}
\end{align}
Similarly, we can show 
\begin{align}
|Z_3|\prec  \sqrt{\frac{n}{M^2} \sum_j \sum_{i, s=1 }^{j-1} |b_{(ij)(sj)}|^2}. \label{17102832}
\end{align}

Finally, we estimate  $Z_4$.   We define the martingale difference sequence 
\begin{align*}
\zeta_{4\ell}:= \mathbb{E}\big(Z_4|\mc{F}_\ell\big)-\mathbb{E}\big(Z_4|\mc{F}_{\ell-1}\big).  
\end{align*}
Similarly to (\ref{17112780}), one can use (\ref{17111970}) to  derive that 
\begin{align}
\zeta_{4\ell}=& \frac{1}{M}\sum_{i=1}^{\ell-1}\sum_{t>s=1}^{\ell-1} b_{(i\ell)(st)}\bar{v}_{k,(i\ell)}\bar{v}_{k,(st)}+\frac{1}{M}\sum_{j>i=1}^{\ell-1}\sum_{s=1}^{\ell-1} b_{(ij)(s\ell)} \bar{v}_{k,(ij)}\bar{v}_{k,(s\ell)}.  \label{17102253}
\end{align}
  The estimate of the two terms in the RHS of (\ref{17102253}) can be done similarly.  Hence, we only show the details for the first term in the sequel.  Applying (\ref{17101050}), we  have 
  \begin{align}
  \Big|\frac{1}{\sqrt{M}}\sum_{t=1}^{\ell-1}\sum_{s=1}^{t-1} b_{(i\ell)(st)}\bar{v}_{k,(st)}\Big|\prec\sqrt{\frac{1}{M} \sum_{t=1}^{\ell-1}\sum_{s=1}^{t-1} |b_{(i\ell)(st)}|^2 }.
  \end{align}
Therefore, using Cauchy-Schwarz, we have 
  \begin{align*}
  \Big|\frac{1}{M}\sum_{i=1}^{\ell-1}\sum_{t>s=1}^{\ell-1} b_{(i\ell)(st)}\bar{v}_{k,(i\ell)}\bar{v}_{k,(st)}\Big|\prec \sqrt{\frac{n}{M^2}\sum_{i=1}^{\ell-1}\sum_{t>s=1}^{\ell-1}  |b_{(i\ell)(st)}|^2 }. 
  \end{align*}
  The estimate for the second term in the RHS of (\ref{17102253}) is similar. Consequently, we have 
  \begin{align*}
  \sum_{\ell} |\zeta_{4\ell}|^2\prec  \frac{n}{M^2}\sum_{i<\ell}  \sum_{s<t}  |b_{(i\ell)(st)}|^2.
  \end{align*}
  Therefore, by Burkholder inequality, we get 
  \begin{align}
  |Z_4|=|\sum_\ell \zeta_{4\ell}|\prec \sqrt{\frac{n}{M^2}\sum_{i<\ell}  \sum_{s<t}  |b_{(i\ell)(st)}|^2}. \label{17102833} 
  \end{align}
  Combining (\ref{17102830}), (\ref{17102831}), (\ref{17102832}) and (\ref{17102833}) finally yields (\ref{17101052}).
  
Hence, we conclude the proof of Proposition \ref{lem. large deviation nonlinear part}. 

\end{proof}

\begin{proof}[Proof of Corollary \ref{cor. large devi}] The results in Corollary \ref{cor. large devi} follow from Propositions \ref{lem. large deviation linear part} and \ref{lem. large deviation nonlinear part}, (\ref{171128230}), and also the fact 
\begin{align}
&\frac{1}{M^2}\sum_{\ell=1}^n \Big|\sum_{j=\ell+1}^n (TB)_{j,(\ell j)}\Big|^2\leq \frac{n}{M^2} \sum_{\ell<j}  \big|(TB)_{j,(\ell j)}^2\big|\nonumber\\
&\leq  \frac{n}{M^2} \mr{Tr} B^*T'TB=  \frac{n}{3M^2} \mr{Tr} B^*\Gamma B\leq \frac{C}{M} \mr{Tr} |B|^2.   \label{17113040}
\end{align}
This completes the proof of Corollary \ref{cor. large devi}. 
\end{proof}

\noindent {\bf S2: Proof of the strong local law}

In this section, we state the proof of Proposition \ref{lem.weak law for R}. 
We first  introduce the notation
\begin{align*}
{\Lambda}_{\rm d}^c\equiv {\Lambda}_{\rm d}^c(z):= \max_{k}|G_{kk}(z)-{m}(z)|.
\end{align*}
Let $\Theta^{(i)}$ be the submatrix of $\Theta$ with the $i$-th row $\mb{v}_i$ removed. We also denote by  $K^{(i)}=\Theta^{(i)}(\Theta^{(i)})'$ and $\mathcal{K}^{(i)}=(\Theta^{(i)})'\Theta^{(i)}$ the submatrices. 
Correspondingly, we further denote by $G^{(i)}(z):= (K^{(i)}-z)^{-1}$ and $\mathcal{G}^{(i)}(z):=(\mathcal{K}^{(i)}-z)^{-1}$ their Green functions. 
Analogously, we use the notation $\Theta^{(ij)}$ to denote the submatrix of $\Theta$ with both the $i$-th and $j$-th rows removed for $i\neq j$. Correspondingly, we can define the notations $K^{(ij)}$, $\mc{K}^{(ij)}$, $G^{(ij)}$ and $\mc{G}^{(ij)}$. 
We also use ${m} ^{(i)}(z)$ and  ${m} ^{(ij)}(z)$ to represent the Stieltijes transforms of $K^{(i)}$ and $K^{(ij)}$, respectively.

\begin{proof}[Proof of Proposition \ref{lem.weak law for R}]  With the aid of the large deviation estimates in Corollary \ref{cor. large devi}, the proof of Proposition \ref{lem.weak law for R} can be done with the aid of the general proof strategy in \cite{PY}. Nevertheless, due to the different dependence structure within the rows of $\Theta$, the proof still differs  in  many technical details.  Hence, in the sequel, we  state the proof in a sketchy way with a highlight on the  parts different from \cite{PY}. In addition, as mentioned above, the statements in \cite{PY} are given in a more quantitative way, especially on the control of the high probability of events. Here, instead, we employ the notation $\prec$ defined in  
Definition \ref{definition of stochastic domination} for the high probability estimates.  But this difference is not essential for the proof. 

 We first fix a $z\in {\mc{D}}(\varepsilon)$ and assume that the following a priori bounds hold
\begin{align}
{\Lambda}_{ \rm d}(z)\prec n^{-\frac{\varepsilon}{10}}, \qquad {\Lambda}_{ \rm o}(z)\prec n^{-\frac{\varepsilon}{10}}.  \label{17102936}
\end{align}
Under the additional assumption (\ref{17102936}), we also have 
\begin{align}
G_{ii}(z)\sim 1, \quad m(z)\sim 1 \label{171130100}
\end{align}
with high probability, in light of  (\ref{17113020}).  We then further define a stochastic control parameter
\begin{align}
\Pi(z):=\sqrt{\frac{\Im \underline{m} (z)+ {\Lambda} (z)}{n\eta}}+\frac{1}{n\eta}.  \label{17111530}
\end{align}

Our first task is to show that 
\begin{align}
{\Lambda}_{\rm d}^c (z)\prec \Pi(z), \qquad {\Lambda}_{\rm o} (z)\prec \Pi(z)  \label{17111401}
\end{align}
under the additional assumption (\ref{17102936}). 

By Schur complement,  we have 
\begin{align}
G_{kk}=\frac{1}{\mb{v}_k\mb{v}_k'- z-\mb{v}_k (\Theta^{(k)})'G^{(k)} \Theta^{(k)}\mb{v}_k'}=:-\frac{1}{1- z-\mb{v}_k B^{(k)}\mb{v}_k'}, \label{17091701}
\end{align}
where in the last step we use the fact $\mb{v}_k\mb{v}_k'=1$ and introduce the notation $B^{(k)}=(\Theta^{(k)})'G^{(k)} \Theta^{(k)}$.  Applying (\ref{17113010}), we have 
\begin{align}
&\Big|\mb{v}_kB^{(k)}\mb{v}_k'- \frac{1}{M} \mr{Tr}  B^{(k)}\wt{\Gamma}\Big|
\prec  \sqrt{\frac{\mathrm{Tr} |B^{(k)}|^2}{M}}. \label{17102931}
\end{align}
   Further, we observe  that 
\begin{align}
B^{(k)}=(\Theta^{(k)})'  \Theta^{(k)}\mc{G}^{(k)}=\mc{K}^{(k)} \mc{G}^{(k)}=I_M+z\mc{G}^{(k)}.  \label{171029401} 
\end{align}
Hence, we have 
\begin{align}
\mathrm{Tr} |B^{(k)}|^2&= \sum_{i=1}^{p-1}\Big|1+\frac{z}{\lambda_i^{(k)}-z}\Big|^2=\sum_{i=1}^{p-1}\Big(1+\frac{z}{\lambda_i^{(k)}-z}+\frac{\bar{z}}{\lambda_i^{(k)}-\bar{z}}+\frac{|z|^2}{|\lambda_i^{(k)}-z|^2}\Big)\nonumber\\
 &=(p-1)\Big(1+ z {m} ^{(k)}+\bar{z}\overline{{m} ^{(k)}}(z)+\frac{|z|^2}{\eta} \Im {m} ^{(k)}(z)\Big),  \label{17102930}
\end{align}
where we use $\lambda_i^{(k)}, i=1, \ldots, p-1$ to denote the $p-1$ nontrivial eigenvalues of  $\mc{K}^{(k)}$, which are also the eigenvalues of $K^{(k)}$.  Plugging (\ref{17102930}) into (\ref{17102931}) yields
\begin{align}
\Big|\mb{v}_k B^{(k)}\mb{v}_k'- \frac{1}{M} \mr{Tr}  B^{(k)}\wt{\Gamma}\Big|\prec\sqrt{\frac{\Im {m} ^{(k)}(z) }{n\eta }+\frac{1}{n}}\prec \Pi(z), \label{17102940}
\end{align}
where in the last step we use the fact $\mr{Tr} G^{(k)}=\mr{Tr} G+O(\frac{1}{\eta})$ (c.f. Lemma \ref{17101810}), and also (\ref{171130100}). 
We can then conclude from  (\ref{17091701}) and (\ref{17102940}) that 
\begin{align}
G_{kk}= \frac{1}{1-z-\frac{1}{M} \mr{Tr} B^{(k)} \wt{\Gamma}+O_\prec(\Pi)}.  \label{17103110}
\end{align}
Let $\sum_{\ell}^{(k)}$ denote the sum over $\ell\in \llbracket 1, p\rrbracket\setminus\{k\}$. We can further write 
\begin{align}
\frac{1}{M} \mr{Tr}  B^{(k)}\wt{\Gamma}&=\frac{1}{M} \mr{Tr}  (\Theta^{(k)})'\Theta^{(k)}\mc{G}^{(k)} \wt{\Gamma}\nonumber\\
&= \frac{1}{M}\sum_{\ell}^{(k)}\mb{v}_{\ell}\mc{G}^{(k)} \wt{\Gamma}\mb{v}_{\ell}'=\frac{1}{M}\sum_{\ell}^{(k)} \frac{\mb{v}_\ell \mathcal{G}^{(k\ell)}\wt{\Gamma}\mb{v}_\ell '}{1+\mb{v}_\ell  \mathcal{G}^{(k\ell)}\mb{v}_\ell '}\nonumber\\
&=\frac{1}{M}\sum_{\ell}^{(k)} \frac{\mb{v}_\ell B^{(k\ell)}\wt{\Gamma}\mb{v}_\ell '-\mb{v}_\ell\wt{\Gamma}\mb{v}_\ell '}{z+\mb{v}_\ell B^{(k\ell)}\mb{v}_\ell '- 1}, \label{17102946}
\end{align}
where we  use Sherman-Morrison formula in the third step, and introduce the matrix $B^{(k\ell)}=(\Theta^{(k\ell)})' G^{(k\ell)}\Theta^{(k\ell)}$ which satisfies  the identity 
\begin{align}
z\mathcal{G}^{(k\ell)}=B^{(k\ell)}-I_M. \label{17111510}
\end{align}  
 Similarly to (\ref{17102931}), we can again apply  (\ref{17113010}) to get 
\begin{align}
&\Big|\mb{v}_\ell B^{(k\ell)}\wt{\Gamma}\mb{v}_\ell '-\frac{1}{M} \mr{Tr}  B^{(k\ell)}\wt{\Gamma}^2\Big|\prec  \sqrt{\frac{\mathrm{Tr} |B^{(k\ell)}\wt{\Gamma}|^2}{M}}\nonumber\\
&\prec  \sqrt{{\mathrm{Tr} |B^{(k\ell)}|^2}}\prec n\sqrt{\frac{\Im {m} ^{(k\ell)}(z) }{n\eta }+\frac{1}{n}}\prec n\Pi(z),
\label{17102944}
\end{align}
where the last two steps can be shown similarly to  (\ref{17102940}). Using (\ref{17113010})  with $B=\wt{\Gamma}$, we have 
\begin{align}
|\mb{v}_\ell\wt{\Gamma}\mb{v}_\ell '-\frac{1}{M}\mr{Tr} \wt{\Gamma}^2|\prec \sqrt{\frac{\mathrm{Tr} \wt{\Gamma}^2}{M}}. \label{17113058}
\end{align}
Observe from (\ref{17092601}) and (\ref{17113050}) that 
\begin{align}
  \wt{\Gamma}^2= \frac{1}{3}(n+2)\Gamma+\frac{1}{9}I_M.  \label{17113057}
\end{align}
In addition, from the definition of $\Gamma$ in (\ref{17102201}) we see that $\mr{Tr} \Gamma=\frac23 M$.  Plugging this fact together with (\ref{17113057}) into (\ref{17113058}) yields the bound  
\begin{align}
|\mb{v}_\ell\wt{\Gamma}\mb{v}_\ell '-\frac{1}{M}\mr{Tr} \wt{\Gamma}^2| \prec  \sqrt{n}.  \label{17102943}
\end{align}
Then, plugging the estimates (\ref{17102940}), (\ref{17102944}) and (\ref{17102943}) into  (\ref{17102946}) yields the estimate 
\begin{align}
\frac{1}{M} \mr{Tr}  B^{(k)}\wt{\Gamma}=  \frac{2}{n-1}\sum_{\ell}^{(k)} \frac{\frac{1}{Mn} \mr{Tr}  B^{(k\ell)}\wt{\Gamma}^2-\frac{1}{Mn} \mr{Tr} \wt{\Gamma}^2+O_\prec(\Pi)}{z-1+\frac{1}{M} \mr{Tr}  B^{(k\ell)}\wt{\Gamma}+O_\prec(\Pi)}. \label{17103101}
\end{align}
It is elementary to check from (\ref{17092601}) and (\ref{17113050}) that 
\begin{align*}
\wt{\Gamma}^2=-\frac{n+1}{9} I_M+\frac{n+2}{3}\wt{\Gamma}.
\end{align*}

For brevity, we further denote by 
\begin{align}
 &{m}^{(\Bbbk)}_\Gamma:=\frac{1}{M} \mr{Tr}  B^{(\Bbbk)}\wt{\Gamma}= z\frac{1}{M} \mr{Tr}  \mathcal{G}^{(\Bbbk)}\wt{\Gamma}-\frac{1}{M} \mr{Tr}\wt{\Gamma}, \nonumber\\
 & {m} ^{(\Bbbk)}_I:= \frac{1}{M} \mr{Tr}  B^{(\Bbbk)}=z\frac{1}{M} \mr{Tr}  \mathcal{G}^{(\Bbbk)}-1, \label{18021101}
\end{align}
where $\Bbbk=\emptyset, \{k\}$, or $\{k,\ell\}$. 
Consequently, we can rewrite (\ref{17103101}) as
\begin{align}
{m} ^{(k)}_{\Gamma} &=  \frac{2}{n-1}\sum_{\ell}^{(k)} \frac{-\frac{n+1}{9n}{m} ^{(k\ell)}_I+\frac{n+2}{3n} {m} ^{(k\ell)}_\Gamma-\frac{2n+5}{9n}+O_\prec(\Pi)}{z-1+{m} ^{(k\ell)}_\Gamma+O_\prec(\Pi)}. \label{18021110}
\end{align}
From (\ref{18021101}), (\ref{091002}) and the fact $\|\wt{\Gamma}\|=O(n)$ (c.f. (\ref{17110901})), we also have 
\begin{align*}
\|{m} ^{(\Bbbk)}_{\Gamma} -{m}_{\Gamma} \|=O(\frac{1}{n\eta}), \qquad \|{m} ^{(\Bbbk)}_{I} -{m}_{I} \|=O(\frac{1}{M\eta}), \qquad \Bbbk=\{k\}, \text{ or } \{k\ell\}.
\end{align*}
This together with  the fact $m_I=\frac{1}{M}\mr{Tr}B=O_\prec(\frac{1}{n})$, (\ref{18021110}) and (\ref{17103110}) further implies that 
\begin{align}
{m} _\Gamma =   \frac{\frac{2}{3}c_n {m} _\Gamma-\frac{4}{9}c_n+O_\prec(\Pi)}{z-1+{m} _\Gamma+O_\prec(\Pi)},
\label{17111571}
\end{align} 
and 
\begin{align}
G_{kk}=  \frac{1}{1-z-m_\Gamma+O_\prec(\Pi)}. \label{18021111}
\end{align}
Then, (\ref{18021111}) and the a priori bound (\ref{171130100}) implies that 
\begin{align}
1-z-m_\Gamma\sim 1, \qquad m_\Gamma\sim 1 \label{18021114}
\end{align}
with high probability.  Plugging (\ref{18021114}) back into (\ref{17111571}) and (\ref{18021114}), we arrive at the equations
\begin{align}
{m} _\Gamma^2+(z-1-\frac23c_n) {m} _\Gamma+\frac49 c_n=O_\prec(\Pi) \label{17103111}
\end{align}
and 
\begin{align}
{m} =\frac{1}{1-z-{m} _\Gamma}+O_\prec(\Pi). \label{17103112}
\end{align}
Substituting (\ref{17103112}) back into (\ref{18021111}) and using (\ref{18021114}) give the first estimate in (\ref{17111401}). In addition, from (\ref{17103111}) and (\ref{17103112}), we can also get the following  equation  for ${m} $:
\begin{align}
\frac23 c_n(z-\frac13){m} ^2+(z-1+\frac23 c_n){m} +1=O_\prec(\Pi).  \label{17111560}
\end{align}

Next, we prove the second estimate in (\ref{17111401}).  To this end, we need Lemma \ref{lem.resolvent identity}. First, combining (\ref{17111402}) with (\ref{17111403}) yields
\begin{align}
G_{ij}=  z \big( G_{ii}(z)G_{jj}(z)-G_{ji}(z)G_{ij}(z)\big) \mb{v}_i \mc{G}^{(ij)}(z) \mb{v}_j', \qquad i\neq j.  \label{17111540}
\end{align}
According to (\ref{17111510}), we can write 
\begin{align}
\mb{v}_i \mc{G}^{(ij)}(z) \mb{v}_j'= z^{-1}\mb{v}_i B^{(ij)}(z) \mb{v}_j'- z^{-1}  \mb{v}_i \mb{v}_j'.  \label{17111515}
\end{align}
Also observe that $\mb{v}_i$ and $\mb{v}_j$ are independent if $i\neq j$. Hence, using (\ref{17113070}) twice we get 
\begin{align}
\big|\mb{v}_i B^{(ij)}(z) \mb{v}_j'| &\prec  \sqrt{\frac{\|B^{(ij)}(z) \mb{v}_j'\|^2}{n}}= \sqrt{\frac{\sum_{k} (\mb{e}_kB^{(ij)}(z) \mb{v}_j')^2}{n}}\nonumber\\
& \prec \sqrt{\frac{\sum_{k,\ell} (\mb{e}_kB^{(ij)}(z) \mb{e}_\ell)^2}{n^2}}\prec \sqrt{\frac{\mr{Tr} (B^{(ij)})^2}{M}}\prec \Pi(z),
\label{17111536}
\end{align}
where the last step follows from the last line of (\ref{17102944}). 
For the second term in the RHS of (\ref{17111515}), using (\ref{17113070}) we have 
\begin{align}
 |\mb{v}_i \mb{v}_j'|\prec \sqrt{\frac{\|\mb{v}_j\|^2}{n}}=\frac{1}{\sqrt{n}}\prec \Pi(z),  \label{17111537}
\end{align}
where the last step follows from the definition of $\Pi(z)$  (c.f. (\ref{17111530})) and the fact that $\Im \underline{m} (z)\gtrsim \eta$ (c.f. (\ref{17113021})).
Plugging (\ref{17111536}) and (\ref{17111537}) into (\ref{17111515}) yields the bound  $|\mb{v}_i \mc{G}^{(ij)}(z) \mb{v}_j'|\prec \Pi(z)$. This together with  (\ref{17111540}), the a priori bounds in (\ref{17102936}) and also (\ref{171130100}), further implies  (\ref{17111401}). 

Next, we show that (\ref{17111560}) can be improved to 
\begin{align}
\frac23 c_n(z-\frac13){m} ^2+(z-1+\frac23 c_n){m} +1=O_\prec(\wh{\Pi}^2) \label{171116113} 
\end{align}
for any control parameter $\wh{\Pi}\equiv \wh{\Pi}(z)$ which satisfies $\Pi(z)\prec \wh{\Pi}(z)$.

To this end, roughly speaking, we need to  improve the error term in both (\ref{17111571}) and 
(\ref{17103112}) from $\Pi$ to $\Pi^2$.  This is achieved through a general fluctuation averaging mechanism in \cite{PY} (see Lemmas 7.3 and 7.4 therein).  We first introduce the following notations
\begin{align*}
&\mc{Z}_{1,k}:= \mb{v}_k B^{(k)}\mb{v}_k'- \frac{1}{M} \mr{Tr}  B^{(k)}\wt{\Gamma}, \nonumber\\
&\mc{Z}_{2,k}:= \frac{1}{n}\mb{v}_kB^{(k)}\wt{\Gamma}\mb{v}_k '- \frac{1}{Mn} \mr{Tr}  B^{(k)}\wt{\Gamma}^2,\nonumber\\
&\mc{Z}_{3, \ell}:=\frac{1}{n}\mb{v}_k\wt{\Gamma}\mb{v}_k '-\frac{1}{Mn}\mr{Tr} \wt{\Gamma}^2.\nonumber\\
\end{align*}
 We have the following 
fluctuation averaging estimates. 
\begin{lem}\label{lem. fluctuation averaging} Suppose that  the a priori bound (\ref{17102936}) holds. Let $\wh{\Pi}\equiv \wh{\Pi}(z)$ be any deterministic control parameter which satisfies $\Pi(z)\prec \wh{\Pi}(z)$. We have 
\begin{align}
\frac{1}{p}\sum_k \mc{Z}_{a,k}=O_\prec(\wh{\Pi}^2), \qquad a=1,2,3. \label{17111601}
\end{align}
\end{lem}
\begin{proof}[Proof of Lemma \ref{lem. fluctuation averaging}] First, the proof of (\ref{17111601}) for $a=3$ is elementary, since it follows from the large deviation of the sum of independent variables directly (c.f. Corollary B.3 of  \cite{EYY12} for instance). 

The proof of (\ref{17111601}) for $a=1,2$ can be done very similarly to the counterpart in \cite{PY}. Hence, we only sketch some necessary changes below, without repeating the tedious argument. For $a=1$, by the identity $z\mathcal{G}^{(k)}=B^{(k)}-I_M$, and (\ref{17111601}) for $a=3$, it suffices to show that 
\begin{align}
\frac{1}{p}\sum_{k} \big(\mb{v}_k \mc{G}^{(k)}\mb{v}_k'- \frac{1}{M} \mr{Tr}  \mc{G}^{(k)}\wt{\Gamma}\big)=O_\prec(\wh{\Pi}^2). \label{171202110}
\end{align}
By  (\ref{171202100}), it suffices to show that 
\begin{align}
\frac{1}{p}\sum_{k}(\mathrm{Id}-\mathbb{E}_k) (\frac{1}{G_{kk}})=O_\prec(\wh{\Pi}^2),  \label{171202103}
\end{align}
where we use $\mathbb{E}_k$ to denote the expectation w.r.t. $\mb{v}_k$.  The proof of (\ref{171202103}) can be done in the same way as  that for Lemma 7.4 in \cite{PY}, by keeping using the expansion in (\ref{17111403}) and the smallness of the off-diagonal entries $(G_{ij})$'s (c.f. (\ref{17111705})). We thus omit the details.

For $a=2$, similarly to (\ref{171202110}), one can instead prove 
\begin{align}
\frac{1}{p}\sum_{k} \big(\frac{1}{n}\mb{v}_k \mc{G}^{(k)}\wt{\Gamma}\mb{v}_k'- \frac{1}{Mn} \mr{Tr}  \mc{G}^{(k)}\wt{\Gamma}^2\big)
=\frac{1}{p}\sum_{k} (\mr{Id}-\mathbb{E}_k)\big(\frac{1}{n}\mb{v}_k \mc{G}^{(k)}\wt{\Gamma}\mb{v}_k'\big)=O_\prec(\wh{\Pi}^2). \label{171202130}
\end{align}
We observe that 
\begin{align}
\frac{1}{n}\mb{v}_k \mc{G}^{(k)}\wt{\Gamma}\mb{v}_k' &= \frac{1}{n}\mb{v}_k \mc{G}^{(k\ell)}\wt{\Gamma}\mb{v}_k'-\frac{1}{n}\frac{\mb{v}_k \mc{G}^{(k\ell)}\mb{v}_\ell'\mb{v}_\ell\mc{G}^{(k\ell)}\wt{\Gamma}\mb{v}_k'}{1+\mb{v}_\ell\mc{G}^{(k\ell)}\mb{v}_\ell'}\nonumber\\
&= \frac{1}{n} \mb{v}_k \mc{G}^{(k\ell)}\wt{\Gamma}\mb{v}_k'+\frac{G_{k\ell}}{G_{kk}} \big(\frac{1}{n}\mb{v}_\ell\mc{G}^{(k\ell)}\wt{\Gamma}\mb{v}_k'\big),\label{171202136}
\end{align}
where in the second step we use (\ref{17111402}) and (\ref{171202100}).  Using the expansion (\ref{171202136}) instead of (\ref{17111403}) and using the smallness of both of the off diagonal entries $G_{ij}$'s and also the smallness of the factor of the form $\frac{1}{n}\mb{v}_\ell\mc{G}^{(k\ell)}\wt{\Gamma}\mb{v}_k'$ with $\ell\neq k$, one can prove (\ref{171202130}) similarly to (\ref{171202110}). We thus omit the details. 

This completes the proof of Lemma \ref{lem. fluctuation averaging}. 
\end{proof}

Now, with the improved bounds in (\ref{17111601}), we proceed to the proof of  (\ref{17111663}).  We first rewrite (\ref{17091701}) as
\begin{align}
G_{kk}=-\frac{1}{1- z-\mb{v}_k B^{(k)}\mb{v}_k'} &= -\frac{1}{1- z-\frac{1}{M} \mr{Tr} B^{(k)}\wt{\Gamma}-\mc{Z}_{1,k}}. \label{171116101}
\end{align}
Note that 
\begin{align}
&\Big|\frac{1}{M} \mr{Tr} B^{(k)}\wt{\Gamma}- \frac{1}{M} \mr{Tr} B\wt{\Gamma}\Big| =\Big|z\frac{1}{M} \frac{\mb{v}_k \mc{G}^{(k)}\wt{\Gamma}\mc{G}^{(k)}\mb{v}_k'}{1+\mb{v}_k \mc{G}^{(k)}\mb{v}_k'}\Big|\nonumber\\
&\prec \frac{1}{n} \frac{\mb{v}_k |\mc{G}^{(k)}|^2 \mb{v}_k'}{|1+\mb{v}_k \mc{G}^{(k)}\mb{v}_k'|}= \frac{1}{n\eta} \frac{\Im  \mb{v}_k \mc{G}^{(k)} \mb{v}_k'}{|1+\mb{v}_k \mc{G}^{(k)}\mb{v}_k'|}. \label{17111698}
\end{align}
From (\ref{17102931}) and (\ref{17102940}), we also have 
\begin{align*}
\mb{v}_k \mc{G}^{(k)} \mb{v}_k' &=  \frac{1}{M} \mr{Tr}  \mc{G}^{(k)}\wt{\Gamma}+O_\prec(\Pi)=z^{-1}\frac{1}{M} \mr{Tr} B^{(k)}\wt{\Gamma}-z^{-1}+O_\prec(\Pi)\nonumber\\
&= z^{-1} m_\Gamma-z^{-1}+ O_\prec(\Pi). 
\end{align*}
Hence, we have 
\begin{align*}
\frac{1}{|1+\mb{v}_k \mc{G}^{(k)}\mb{v}_k'|}=\frac{|z|}{|1-z-m_\Gamma+O_\prec(\Pi)|}=|zm(z)|+O_\prec(\Pi)\prec 1,
\end{align*}
where we use (\ref{17103112}) and (\ref{171130100}). 
Moreover, we also have 
\begin{align}
\Im \mb{v}_k \mc{G}^{(k)} \mb{v}_k'=z^{-1} \Im \frac{1}{M} \mr{Tr} B^{(k)}\wt{\Gamma}+O_\prec(\eta)+O_\prec(\Pi).  \label{171111691}
\end{align}
Further, from (\ref{17103112}) we also have 
\begin{align}
\big|\Im \frac{1}{M} \mr{Tr} B^{(k)}\wt{\Gamma}\big|= \big|\Im \frac{1}{M} \mr{Tr} B\wt{\Gamma}+O_\prec(\Pi)\big|\prec   \Im {m} +\eta+\Pi. \label{17111690}
\end{align}
Substituting (\ref{17111690}) into (\ref{171111691}) yields
\begin{align}
|\Im \mb{v}_k \mc{G}^{(k)} \mb{v}_k'| \prec \Im {m} +\eta+\Pi\prec \Im \underline{m} +\Lambda+\Pi, \label{17111697}
\end{align}
where we also use the fact $\Im \underline{m} \gtrsim \eta$.  Plugging (\ref{17111697}) into (\ref{17111698}) we get 
\begin{align}
\Big|\frac{1}{M} \mr{Tr} B^{(k)}\wt{\Gamma}- \frac{1}{M} \mr{Tr} B\wt{\Gamma}\Big|\prec \Pi^2.  \label{171116100}
\end{align}
Hence, from (\ref{171116101}) and (\ref{171116100}) we get 
\begin{align*}
G_{kk}&=-\frac{1}{1- z-\frac{1}{M} \mr{Tr} B\wt{\Gamma}-\mc{Z}_{1,k}+O_\prec(\Pi^2)}=  \frac{1}{-1+ z+{m} _{\Gamma}+\mc{Z}_{1,k}}+O_\prec(\Pi^2).
\end{align*}
Then taking the average of $G_{kk}$ over $k$ and using (\ref{17111601}) for $a=1$, we obtain 
\begin{align}
{m} = \frac{1}{-1+ z+{m} _{\Gamma}}+ O_\prec(\wh{\Pi}^2). \label{171116112} 
\end{align}
Similarly, applying (\ref{17111601}) we can also improve (\ref{17103111}) to  
\begin{align}
{m} _\Gamma^2+(z-1-\frac23c_n) {m} _\Gamma+\frac49 c_n=O_\prec(\wh{\Pi}^2). \label{171116111} 
\end{align}
Combining (\ref{171116112}) and  (\ref{171116111}) we can further get (\ref{171116113}).

Now, we obtain (\ref{17111401}) and (\ref{171116113}) with the aid of the additional input (\ref{17102936}), for a fixed $z\in \mc{D}(\varepsilon)$.  To prove  (\ref{17111705}) and (\ref{17111663}), one needs to go through a standard continuity argument, starting from $\eta\geq 1$ and reducing $\eta$ to $\eta=n^{-1+\varepsilon}$ step by step, with a step size $n^{-3}$ (say). The whole continuity argument is completely the same as the counterpart of the sample covariance matrices in \cite{PY}, although the notation $\prec$ was not used therein.  We thus omit this argument and conclude (\ref{17111401}) and (\ref{171116113}).

Finally, for (\ref{17111701}), it is well understood now (c.f. \cite{PY})  that (\ref{17111701}) will follow from (\ref{17111663}) and (\ref{171116113}), if one can additionally show a crude upper bound 
\begin{align}
\lambda_1(K)\prec 1.  \label{17111720}
\end{align}
A proof of (\ref{17111720}) is given at the end of this section. We remark here in  \cite{PY}, a slightly stronger crude upper bound was used, namely, with high probability the largest eigenvalue is bounded by some large (but independent of $n$) positive constant $C$. In order to use such a bound, one need to extend the local law to a larger domain to include $E=C$, where $E=\Re z$. Here, in  (\ref{17111720}), we have a weaker crude upper bound, namely, with high probability, $\lambda_1(K)\leq n^{{\epsilon}}$ for any tiny constant ${\epsilon}>0$. In order to use such a bound to further get (\ref{17111701}), we need to extend our local law  from ${\mathcal{D}}(\epsilon)$ to a larger domain: $\wt{\mathcal{D}}(\epsilon):=\big\{z=E+\ii \eta: \frac{1}{2}{\lambda}_{+,c}\leq E\leq n^{\frac{\epsilon}{10}}, n^{-1+\epsilon}\leq \eta\leq 1 \big\}$ (say). For sufficiently small $\epsilon$, the proof of the local law, i.e., Proposition \ref{lem.weak law for R} (i), (ii), on $\wt{\mathcal{D}}(\epsilon)$, does not require any essential change on the proof on the smaller domain  ${\mathcal{D}}(\epsilon)$. Therefore, we complete the proof of Proposition \ref{lem.weak law for R}.

\end{proof}

In the sequel, we prove the estimate (\ref{17111720}). 
\begin{proof}[Proof of (\ref{17111720})]
We first write 
\begin{align*}
K=UU'+U\bar{V}'+\bar{V}U'+\bar{V}\bar{V}',
\end{align*}
according to Hoeffding decomposition, where $U$ and $\bar{V}$ are defined in (\ref{171130101}).  From (\ref{171104100}), we known that $\|U\|\prec 1$. Hence, it suffices to show that $\|\bar{V}\bar{V}'\|\prec 1$ which 
is equivalent to $\|\bar{V}\|\prec 1$.  To this end, we observe that 
\begin{align}
&(\bar{V}\bar{V}')_{kk}=\frac{1}{M}\sum_{i<j} \bar{v}_{k,(ij)}^2=O(1), \nonumber\\
&(\bar{V}\bar{V}')_{k\ell}=  \frac{1}{M}\sum_{i<j} \bar{v}_{k,(ij)} \bar{v}_{\ell,(ij)}\prec \sqrt{\frac{\|\bar{\mathbf{v}}_\ell\|^2}{M}}=O(\frac{1}{n}), \qquad k\neq \ell \label{18021850}
\end{align}
where in the second inequality we use (\ref{17101050}). Hence, $\bar{V}\bar{V}$ is a $p\times p$ matrix whose diagonal entries are order $1$ and the off-diagonal entries are $O_\prec(\frac{1}{n})$. For a rectangular matrix $A=(a_{ij})_{\mathsf{N},\mathsf{M}}$, let $\|A\|_1=\max_{1\leq j\leq \mathsf{M}} \sum_{i=1}^{\mathsf{N}}|a_{ij}|$ and $\|A\|_\infty=\max_{1\leq i\leq \mathsf{N}} \sum_{j=1}^{\mathsf{M}}|a_{ij}|$ be its 
$1$-norm and $\infty$-norm, respectively.   Then by H\"{o}lder's inequality for the matrix norm $\|A\|\leq \sqrt{\|A\|_1\|A\|_\infty}$, we can get  from (\ref{18021850}) the bound $\|\bar{V}\bar{V}'\|\prec 1$.  This concludes the proof.
\end{proof}

\noindent {\bf S3: Proofs of some other lemmas}

In this section, we state the proofs of Lemmas \ref{lem. local law for K gamma}, \ref{lem. technical estimates} and \ref{lem. local law for K hat t}. We also state the proof of the last estimate in (\ref{17100901}) at the end of this section.

\begin{proof}[Proof of Lemma \ref{lem. local law for K gamma}] The proof of  Proposition  \ref{lem.weak law for R} only relies on the large deviation results in Propositions \ref{lem. large deviation linear part} and \ref{lem. large deviation nonlinear part}. It suffices to check that Proposition \ref{lem. large deviation nonlinear part} still holds if one replaces $\bar{\mb{v}}_k$ by $\mathbf{h}_k$, where $\mathbf{h}_k$ represents the $k$-th row of $H$.  In light of (\ref{17102202}) and the fact that $\mb{h}_k$ has $i.i.d.$ normal entries, it is easy to check that the results in Proposition \ref{lem. large deviation nonlinear part} are still valid for $\mb{h}_k$ instead of $\bar{\mb{v}}_k$, using the large deviation estimates for independent random variables ((c.f. Corollary B.3 of  \cite{EYY12} for instance)). Actually, the counterparts of (\ref{17101051}) and (\ref{17101052}) are even sharper in the case of $\mb{h}_k$ instead of $\bar{\mb{v}}_k$. Hence, we  complete the proof of  Lemma \ref{lem. local law for K gamma}. 
\end{proof}

\begin{proof}[Proof of Lemma \ref{lem. technical estimates}]
Recall the definition of $A_\gamma$ and $B_\gamma$ from (\ref{17110230}), and also set 
\begin{align*}
\mc{K}_\gamma^{(\gamma)}= (\Theta_\gamma^{(\gamma)})'(\Theta_\gamma^{(\gamma)}), \qquad \mc{G}_\gamma^{(\gamma)}=(\mc{K}_\gamma^{(\gamma)}-z)^{-1}. 
\end{align*}   
Similarly to (\ref{17102930}), we have 
\begin{align}
&\mr{Tr} |B_\gamma|^2=\mr{Tr}|I+z\mc{G}_\gamma^{(\gamma)}|^2\nonumber\\
&\qquad= (p-1) \Big(1+ z m^{(\gamma)}_\gamma+\bar{z}\overline{m^{(\gamma)}_\gamma}(z)+\frac{|z|^2}{\eta} \Im m^{(\gamma)}_\gamma(z)\Big)=O_\prec(n^{\frac43+\varepsilon}), \label{17110241}
\end{align}
where the last step follows from  Lemma \ref{lem. local law for K gamma},  Lemma \ref{lem.properties of m} and the fact $|m_\gamma^{(\gamma)}-m_\gamma|\leq \frac{1}{n\eta}$.   
Similarly, we have 
\begin{align}
\mr{Tr} |A_\gamma|^2= \mr{Tr} |(\Theta_\gamma^{(\gamma)})' (G_\gamma^{(\gamma)})^2\Theta_\gamma^{(\gamma)}|^2=\mr{Tr} \big| \mc{G}_\gamma^{(\gamma)}+z(\mc{G}_\gamma^{(\gamma)})^2\big|^2\nonumber\\
\leq \frac{1}{\eta^2} \mr{Tr} \big| I+z\mc{G}_\gamma^{(\gamma)}\big|^2=O_\prec(n^{\frac83+\varepsilon}), \label{17110246}
\end{align}
where in the last step we use (\ref{17110241}).  In addition, we also have 
\begin{align}
&\mr{Tr} B_\gamma=\mr{Tr}(I+z\mc{G}_\gamma^{(\gamma)})=(p-1)(1+zm_\gamma^{(\gamma)})=O_\prec(n),\nonumber\\
&\mr{Tr} A_\gamma= \mr{Tr}(\mc{G}_\gamma^{(\gamma)}+z(\mc{G}_\gamma^{(\gamma)})^2)=O_\prec(n^{\frac{4}{3}+\varepsilon}), \nonumber\\
& \mr{Tr} |A_\gamma|= \mr{Tr}|\mc{G}_\gamma^{(\gamma)}+z(\mc{G}_\gamma^{(\gamma)})^2|=O_\prec(n^{\frac{4}{3}+\varepsilon}).\label{17110250}  
\end{align}
From the local law in Lemma \ref{lem. local law for K gamma}, it is easy to show that 
\begin{align}
\mr{Tr} |B_\gamma|=\mr{Tr}|I+z\mc{G}_\gamma^{(\gamma)}|\prec n. \label{18021801}
\end{align}
For instance, we refer to Lemma 3.10 of  \cite{BPZunpub} and its proof  for a detailed argument on the derivation of the above bound from the local law.

Applying Propositions \ref{lem. large deviation linear part} and \ref{lem. large deviation nonlinear part}, (\ref{17110241}), (\ref{17110246}), (\ref{17110250}) and the fact $\|\Gamma\|= O(n)$, we see that 
\begin{align*}
&|\mc{U}_\gamma|\prec \sqrt{\frac{\mathrm{Tr} |B_\gamma|^2}{M}}\prec    n^{-\frac13+\varepsilon}, \quad |\mc{V}_\gamma|\prec \frac{1}{M} |\mr{Tr } B_\gamma| +\sqrt{\frac{n}{M^2} \mr{Tr} |B_\gamma|^2} \prec n^{-\frac56+\varepsilon}.
\end{align*}
Then, using the large deviation of the independent random variables ((c.f. Corollary B.3 of  \cite{EYY12} for instance)), it is easy to check 
\begin{align*}
& |\hat{\mc{P}}_\gamma(z)|\prec  \sqrt{\frac{\mathrm{Tr} |TB_\gamma|^2}{M^2}}=  \sqrt{\frac{\mathrm{Tr} B_\gamma\Gamma B_\gamma^*}{M^2}}\prec  \sqrt{\frac{n\mathrm{Tr} |B_\gamma|^2 }{M^2}}\prec n^{-\frac56+\varepsilon}, \nonumber\\
 &|\hat{\mc{V}}_\gamma|\prec \frac{1}{M} |\mr{Tr } B_\gamma| +\sqrt{\frac{1}{M^2} \mr{Tr} |B_\gamma|^2} \prec n^{-1+\varepsilon},\nonumber\\
& |\hat{\mc{W}}_\gamma| \prec n^{-\frac12+\varepsilon}, \qquad   |\hat{\mc{O}}_\gamma|\prec n^{-1+\varepsilon}, 
\end{align*}
Hence, the first seven estimates in (\ref{17112601}) are proved.  Analogously, we can prove the last three estimates in (\ref{17112601}) by using Propositions \ref{lem. large deviation linear part} and \ref{lem. large deviation nonlinear part},  (\ref{17110246}), the last two estimates in (\ref{17110250}). For instance, 
 from Proposition \ref{lem. large deviation linear part}, (\ref{17110250}) and (\ref{17110246}), we have  the bound 
\begin{align}
|{\mb{u}}_\gamma A_\gamma {\mb{u}}_\gamma'|\prec \frac{1}{M} \big|\mr{Tr} A_\gamma\Gamma\big|+ \sqrt{\frac{\mathrm{Tr} |A_\gamma\Gamma|^2}{M^2}}\prec \frac{n}{M} \mr{Tr} |A_\gamma|+ \sqrt{\frac{\mathrm{Tr} |A_\gamma|^2}{M}} \prec n^{\frac13+\varepsilon}. \label{17110361}
\end{align}
We omit the details of the  estimates for the last two estimates in (\ref{17112601}).  They can be obtained similarly. 

 Next, we prove (\ref{171201100}).  By the large deviation inequality (\ref{17101051}), we have 
\begin{align*}
&|\mc{P}_\gamma|\prec  \sqrt{\frac{n}{M^2} \mr{Tr} |B_\gamma|^2}+\sqrt{\frac{1}{M^2}\sum_{\ell=1}^n \Big|\sum_{j=\ell+1}^n (TB_\gamma)_{j,(\ell j)}\Big|^2},\nonumber\\
&|\mc{Q}_\gamma|\prec   \sqrt{\frac{n}{M^2} \mr{Tr} |A_\gamma|^2}+\sqrt{\frac{1}{M^2}\sum_{\ell=1}^n \Big|\sum_{j=\ell+1}^n (TA_\gamma)_{j,(\ell j)}\Big|^2}.
\end{align*}
Applying (\ref{17110241}) and (\ref{17110246}) we obtain
\begin{align*}
\sqrt{\frac{n}{M^2} \mr{Tr} |B_\gamma|^2}\prec n^{-\frac56+\varepsilon}, \qquad \sqrt{\frac{n}{M^2} \mr{Tr} |A_\gamma|^2}\prec  n^{-\frac16+\varepsilon}. 
\end{align*}
To show the last two estimates in (\ref{17110250}), we will prove the bound
\begin{align}
\frac{1}{M^2}\sum_{\ell=1}^n \Big|\sum_{j=\ell+1}^n (TB_\gamma)_{j,(\ell j)}\Big|^2\prec n^{-1}, \label{17112801}\\
\frac{1}{M^2}\sum_{\ell=1}^n \Big|\sum_{j=\ell+1}^n (TA_\gamma)_{j,(\ell j)}\Big|^2\prec n^{-\frac13+\varepsilon}.  \label{17112802}
\end{align}

The proofs of (\ref{17112801}) and (\ref{17112802}) can be done in the same way. We thus present the details for the proof of (\ref{17112801}) only. Recall the definition of $B_\gamma=(\Theta_\gamma^{(\gamma)})' G_\gamma^{(\gamma)}\Theta_\gamma^{(\gamma)}$ from (\ref{17110230}). To ease the presentation, in the sequel, we work with $\Theta_\gamma$ and $G_\gamma$ instead of the minors $\Theta_\gamma^{(\gamma)}$ and $G_\gamma^{(\gamma)}$, and prove 
\begin{align}
\frac{1}{M^2}\sum_{\ell=1}^n \Big|\sum_{j=\ell+1}^n (T(\Theta_\gamma)' G_\gamma\Theta_\gamma)_{j,(\ell j)}\Big|^2\prec n^{-1} \label{17112820}
\end{align}
instead of (\ref{17112801}). Further, we only show the details for the proof of (\ref{17112820}) for $\gamma=0$  to ease the presentation. The extension to general $\gamma$ will be explained at the end. Observe that $\Theta_0=\Theta$. 
 We first notice from (\ref{17112630}) that 
\begin{align}
\mathbf{e}_i T\Theta'= \sum_{\beta>i}\mb{\theta}_{(i\beta)}'-\sum_{\alpha<i} \mb{\theta}_{(\alpha i)}'= \sum_{\alpha}\mb{\theta}_{(i\alpha)}', \label{17112635}
\end{align}
where we use the fact $\mb{\theta}_{(\alpha i)}= -\mb{\theta}_{(i\alpha)}$.  Here we use $\mathbf{e}_i$ to represent the $n$-dimensional row vector whose $i$th coordinate is $1$ and the others are $0$. 
Hence, we can write
\begin{align}
\sum_{j=\ell+1}^n(T\Theta' G\Theta)_{j,(\ell j)}=\sum_{j=\ell+1}^n\sum_{\alpha}  \mb{\theta}_{(j\alpha)}' G  \mb{\theta}_{(\ell j)}= \mr{Tr} G  \Big(\sum_{j=\ell+1}^n\mb{\theta}_{(\ell j)} \Big(\sum_{\alpha}  \mb{\theta}_{(j\alpha)}'\Big)\Big).\label{17112860}
\end{align}
Using the decomposition in (\ref{17120205}), we can write
\begin{align}
\sum_{j=\ell+1}^n\mb{\theta}_{(\ell j)}  \big(\sum_{\alpha}\mb{\theta}_{(j\alpha)}'\big)= &\mb{\theta}_{(\ell \cdot)} \big(\sum_{j=\ell +1}^n\sum_{\alpha}\mb{\theta}_{(j\alpha)}'\big)-\sum_{j=\ell+1}^n  \mb{\theta}_{(j\cdot)}\big(\sum_{\alpha}\mb{\theta}_{(j\alpha)}'\big)\nonumber\\
& +  \sum_{j=\ell+1}^n\bar{\mb{\theta}}_{(\ell j)}  \big(\sum_{\alpha}\mb{\theta}_{(j\alpha)}'\big). \label{171126117}
\end{align}
Therefore, to show (\ref{17112820}) with $\gamma=0$, it suffices to prove 
\begin{align}
&\frac{1}{M^2}\sum_{\ell}  \Big| \big(\sum_{j=\ell +1}^n\sum_{\alpha}\mb{\theta}_{(j\alpha)}'\big)G \mb{\theta}_{(\ell \cdot)} \Big|^2\prec n^{-1},\label{17120101}\\
& \frac{1}{M^2}\sum_{\ell}  \Big| \mr{Tr} G \sum_{j=\ell+1}^n  \mb{\theta}_{(j\cdot)}\big(\sum_{\alpha}\mb{\theta}_{(j\alpha)}'\big)\Big|^2\prec n^{-1}, \label{17120102}\\
&   \frac{1}{M^2}\sum_{\ell}  \Big|\mr{Tr}  G \sum_{j=\ell+1}^n\bar{\mb{\theta}}_{(\ell j)}  \big(\sum_{\alpha}\mb{\theta}_{(j\alpha)}'\big)\Big|^2\prec n^{-1}.   \label{17120103}
\end{align}

For (\ref{17120101}), by (\ref{17113070}), we have  $\|\sum_{j=\ell+1}^n\sum_{\alpha}\mb{\theta}_{(j\alpha)}\|_{\infty}\prec\sqrt{n}$, and thus  $\|\sum_{j=\ell+1}^n\sum_{\alpha}\mb{\theta}_{(j\alpha)}\|\prec n$. 
This further implies 
\begin{align*}
\frac{1}{M^2}\sum_{\ell}  \Big| \big(\sum_{j=\ell +1}^n\sum_{\alpha}\mb{\theta}_{(j\alpha)}'\big)G \mb{\theta}_{(\ell \cdot)} \Big|^2 \prec  \frac{1}{n^2}\sum_{\ell} \|G \mb{\theta}_{(\ell \cdot)}\|^2=  \frac{1}{n^2} \mr{Tr}G \big(\sum_{\ell}\mb{\theta}_{(\ell \cdot)}\mb{\theta}_{(\ell \cdot)}'\big)G^*.
\end{align*}
Now, note that $\sum_{\ell}\mb{\theta}_{(\ell \cdot)}\mb{\theta}_{(\ell \cdot)}'$ is a sample covariance matrix with mean zero and variance $\frac{1}{3M}$ entries $\frac{1}{\sqrt{M}}v_{k,(i\cdot)}$'s. Then from Proposition \ref{pro.large deviation for operator norm}, it is easy to check
$
\|\sum_{\ell}\mb{\theta}_{(\ell \cdot)}\mb{\theta}_{(\ell \cdot)}'\|\prec \frac{1}{n}. 
$ 
Hence, 
\begin{align*}
\frac{1}{M^2}\sum_{\ell}  \Big| \big(\sum_{j=\ell +1}^n\sum_{\alpha}\mb{\theta}_{(j\alpha)}'\big)G \mb{\theta}_{(\ell \cdot)} \Big|^2\prec \frac{1}{n^3} \mr{Tr}|G|^2 \prec n^{-\frac53+\varepsilon}. 
\end{align*}

For (\ref{17120102}), we further write
\begin{align}
\sum_{j=\ell+1}^n  \mb{\theta}_{(j\cdot)}\big(\sum_{\alpha}\mb{\theta}_{(j\alpha)}'\big)=n\sum_{j=\ell+1}^n  \mb{\theta}_{(j\cdot)}\mb{\theta}_{(j\cdot)}'- \big(\sum_{j=\ell+1}^n  \mb{\theta}_{(j\cdot)}\big) \big(\sum_{\alpha}\mb{\theta}_{(\alpha\cdot)}'\big)\nonumber\\
+\sum_{j=\ell+1}^n  \mb{\theta}_{(j\cdot)}\big(\sum_{\alpha}\bar{\mb{\theta}}_{(j\alpha)}'\big), \label{17120110}
\end{align}

Again, from Proposition \ref{pro.large deviation for operator norm}, we can check  $\|n\sum_{j=\ell+1}^n  \mb{\theta}_{(j\cdot)}\mb{\theta}_{(j\cdot)}'\|\prec 1$. In addition, according to the large deviation of the sum of independent random variables,   it is easy to see that $\|\sum_{j=\ell+1}^n  \mb{\theta}_{(j\cdot)}\|_{\infty}=O_\prec(\frac{1}{\sqrt{n}})$ and $\|\sum_{\alpha}\mb{\theta}_{(\alpha\cdot)}\|_{\infty}=O_\prec(\frac{1}{\sqrt{n}})$. Consequently, we have  the bounds $\|\sum_{j=\ell+1}^n  \mb{\theta}_{(j\cdot)}\|\prec 1$ and  $\|\sum_{\alpha}\mb{\theta}_{(\alpha\cdot)}\|\prec 1$.  For the last term in the RHS of (\ref{17120110}), we write 
\begin{align*}
\sum_{j=\ell+1}^n  \mb{\theta}_{(j\cdot)}\big(\sum_{\alpha}\bar{\mb{\theta}}_{(j\alpha)}'\big)&= \big(\mb{\theta}_{(\ell+1,\cdot)}, \cdots, \mb{\theta}_{(n\cdot)}\big) \big(\sum_{\alpha}\bar{\mb{\theta}}_{(\ell+1,\alpha)}, \cdots, \sum_{\alpha}\bar{\mb{\theta}}_{(n,\alpha)}\big)'\nonumber\\
&=: \Theta_{\ell \Cdot[2]} \bar{\Theta}_{\ell+}'.
\end{align*}
Using Proposition \ref{pro.large deviation for operator norm} again, we have 
\begin{align}
|\Theta_{\ell \Cdot[2]}|\prec \frac{1}{\sqrt{n}}.  \label{17120120}
\end{align}
In addition, we have 
\begin{align}
\|\bar{\Theta}_{\ell+}\|= \sqrt{\|\bar{\Theta}_{\ell+}\bar{\Theta}_{\ell+}'\|}=\sqrt{\Big{\|} \sum_{i=\ell+1}^n \big( \sum_{\alpha}\bar{\mb{\theta}}_{(i,\alpha)}\big)\big( \sum_{\alpha}\bar{\mb{\theta}}_{(i,\alpha)}\big)' \Big{\|}}\prec \sqrt{n}, \label{17120131}
\end{align}
where in the last step we use the fact 
\begin{align}
\big{\|} \sum_{\alpha}\bar{\mb{\theta}}_{(i\alpha)}\big{\|}\prec 1, \label{17112680}
\end{align}
which follows from  (\ref{17101050}).  Therefore, we conclude 
\begin{align}
\Big{\|}\sum_{j=\ell+1}^n  \mb{\theta}_{(j\cdot)}\big(\sum_{\alpha}\mb{\theta}_{(j\alpha)}'\big)\Big{\|}\prec 1.  \label{17120170}
\end{align}
This implies 
\begin{align*}
\frac{1}{M^2}\sum_{\ell}  \Big| \mr{Tr} G \sum_{j=\ell+1}^n  \mb{\theta}_{(j\cdot)}\big(\sum_{\alpha}\mb{\theta}_{(j\alpha)}'\big)\Big|^2\prec \frac{n}{M^2} (\mr{Tr} |G|)^2\prec n^{-1},
\end{align*}
which proves (\ref{17120102}).  Here in the last step we use the fact $\text{Tr} |G|\prec n$ whose proof is analogous to (\ref{18021801}).  Again, we refer to Lemma 3.10 of  \cite{BPZunpub} and its proof  for a similar derivation of such bound from the local law.
For (\ref{17120103}),  we write 
\begin{align}
\sum_{j=\ell+1}^n\bar{\mb{\theta}}_{(\ell j)}  \big(\sum_{\alpha}\mb{\theta}_{(j\alpha)}'\big)=  &n\sum_{j=\ell+1}^n\bar{\mb{\theta}}_{(\ell j)} \mb{\theta}_{(j\cdot)}'- \sum_{j=\ell+1}^n\bar{\mb{\theta}}_{(\ell j)}  \big(\sum_{\alpha}\mb{\theta}_{(\alpha\cdot)}'\big)\nonumber\\
&\qquad\qquad+\sum_{j=\ell+1}^n\bar{\mb{\theta}}_{(\ell j)}  \big(\sum_{\alpha}\bar{\mb{\theta}}_{(j\alpha)}'\big). \label{171126160}
\end{align}
For the first term in the RHS of (\ref{171126160}),  we  have
\begin{align*}
\sum_{j=\ell+1}^n\bar{\mb{\theta}}_{(\ell j)} \mb{\theta}_{(j\cdot)}'= \Big( \bar{\mb{\theta}}_{(\ell,\ell+1)}, \cdots, \bar{\mb{\theta}}_{(\ell n)}\Big) \Big(\mb{\theta}_{(\ell+1, \cdot)}, \cdots, \mb{\theta}_{(n\cdot)} \Big)'=: \bar{\Theta}_{\ell}  \Theta_{\ell\Cdot[2]}'.
\end{align*}  
Conditioning on the randomness of $w_{k\ell}$ for all $k\in\llbracket 1, p\rrbracket$ and a fixed $\ell$, the random matrix $\bar{\Theta}_\ell$ is also a mean $0$ data matrix with (conditionally) independent entries. Hence, conditioning on $w_{k\ell}$ for all $k\in\llbracket 1, p\rrbracket$  and a fixed $\ell$, the matrix $\bar{\Theta}_\ell\bar{\Theta}_\ell'$ is again a sample covariance matrix. From Proposition \ref{pro.large deviation for operator norm}, we have
\begin{align}
\|\bar{\Theta}_\ell\|\prec \frac{1}{\sqrt{n}}. \label{171126190}
\end{align}
This together with (\ref{17120120}) yields 
$
{\|}n\sum_{j=\ell+1}^n\bar{\mb{\theta}}_{(\ell j)} \mb{\theta}_{(j\cdot)}'{\|}\prec 1. 
$
Further, by (\ref{17101050}) one can check that $\|\sum_{j=\ell+1}^n\bar{\mb{\theta}}_{(\ell j)}\|_\infty\prec \frac{1}{\sqrt{n}}$. Then the second term in the RHS of (\ref{171126160}) can be bounded by the facts $\|\sum_{j=\ell+1}^n\bar{\mb{\theta}}_{(\ell j)}\|\prec 1$  and   $\|\sum_{\alpha}\mb{\theta}_{(\alpha\cdot)}\|\prec 1$. For the last term in the RHS of  (\ref{171126160}), we observe that
\begin{align*}
\|\sum_{j=\ell+1}^n\bar{\mb{\theta}}_{(\ell j)}  \big(\sum_{\alpha}\bar{\mb{\theta}}_{(j\alpha)}'\big)\|=  \|\bar{\Theta}_{\ell} \bar{\Theta}_{\ell+}' \|\prec 1,
\end{align*} 
where in the last step we use (\ref{17120131}) and (\ref{171126190}). Therefore, we have 
\begin{align}
\|\sum_{j=\ell+1}^n\bar{\mb{\theta}}_{(\ell j)}  \big(\sum_{\alpha}\mb{\theta}_{(j\alpha)}'\big)\|\prec 1. \label{17120176}
\end{align}
This implies 
\begin{align*}
\frac{1}{M^2}\sum_{\ell}  \Big|\mr{Tr}  G \sum_{j=\ell+1}^n\bar{\mb{\theta}}_{(\ell j)}  \big(\sum_{\alpha}\mb{\theta}_{(j\alpha)}'\big)\Big|^2\prec \frac{n}{M^2}  (\mr{Tr}  |G|)^2\prec \frac{1}{n}.
\end{align*}
Again, in the last step above we use the fact $\mr{Tr}  |G| \prec n$. 
This proves  (\ref{17120103}). Hence, we complete the proof of  (\ref{17112820}) for $\gamma=0$.

 For $\gamma>0$, we denote by $\mb{\theta}_{(ij)}^\gamma$ the $(ij)$-th column of the matrix $\Theta_\gamma$, i.e. the $k$-th component of  $\sqrt{M}\mb{\theta}_{(ij)}^\gamma$ is $ v_{k,(ij)}$ if $k\leq \gamma$, and is $ (v_{k,(i\cdot)}-v_{k,(j\cdot)}+h_{k,(ij)})$ otherwise. We then further denote by $\bar{\mb{\theta}}_{(ij)}^{\gamma}$ the random vector whose $k$-th component is $\frac{1}{\sqrt{M}}\bar{v}_{k,(ij)}$ if $k\leq \gamma$ and is $\frac{1}{\sqrt{M}} h_{k,(ij)}$ otherwise.  Replacing $\bar{\mb{\theta}}_{(ij)}$ by $\bar{\mb{\theta}}_{(ij)}^\gamma$ in the above discussion, we can prove (\ref{17112820}) for general $\gamma$ similarly. 
Performing the proof with the minors $\Theta_\gamma^{(\gamma)}$ and $G_\gamma^{(\gamma)}$ instead of $\Theta_\gamma$ and $G_\gamma$, we can conclude (\ref{17112801}).  Similarly, we can prove (\ref{17112802}). We omit the details.  This completes the proof of (\ref{171201100}).

Next, we show the estimates in (\ref{17112610}).  For the first estimate in (\ref{17112610}), we have 
 \begin{align*}
\big|\mathbb{E}\mb{u}_\gamma A_\gamma \mb{u}_\gamma'\hat{\mc{W}}_\gamma \big|&=  \big|\mr{Cov}\big(\mb{u}_\gamma A_\gamma \mb{u}_\gamma', {\mb{u}}_\gamma{\mb{u}}_\gamma'\big)\big|= \big| \mr{Cov} \big(\mb{v}_{\gamma,\Cdot[2]}T A_\gamma T'\mb{v}_{\gamma,\Cdot[2]}', {\mb{v}}_{\gamma,\Cdot[2]}TT'{\mb{v}}_{\gamma,\Cdot[2]}'\big)\big|\nonumber\\
&\prec \Big|\frac{1}{M^2} \mr{Tr} T A_\gamma T' TT' \Big|+\Big|\frac{1}{M^2}\sum_{i=1}^n  (T A_\gamma T')_{ii} (TT')_{ii} \Big|\nonumber\\
&\prec\Big|\frac{1}{M^2} \mr{Tr} A_\gamma \Gamma^2 \Big|+\Big|\frac{n}{M^2}\mr{Tr} A_\gamma \Gamma\Big|\prec \frac{1}{M} \mr{Tr} |A_\gamma| \prec n^{-\frac23+\varepsilon},
 \end{align*}
 where we use the identity (\ref{17102202}), the facts $T'T=\Gamma$, $(TT')_{ii}=n-1$ and (\ref{17110250}). 

For the second estimate in (\ref{17112610}), we have 
\begin{align*}
&\mathbb{E}\big( {\mb{u}}_\gamma A_\gamma(z_1) {\mb{u}}_\gamma' {\mb{u}}_\gamma B_\gamma(z_2) \bar{\mb{v}}_\gamma'\big)= \mathbb{E}\big( {\mb{v}}_{\gamma,\Cdot[2]} TA_\gamma(z_1)T' {\mb{v}}_{\gamma,\Cdot[2]}' {\mb{v}}_{\gamma,\Cdot[2]} T B_\gamma(z_2) \bar{\mb{v}}_\gamma'\big)\nonumber\\
&= \frac{1}{M^2}\sum_{a,b,c} \sum_{i<j} \mathbb{E}\Big((TA_\gamma(z_1)T')_{ab}  (T B_\gamma(z_2))_{c, (ij)}\Big)  \mathbb{E} \big(v_{\gamma, (a\cdot)}v_{\gamma, (b\cdot)} v_{\gamma, (c\cdot)}  \bar{v}_{\gamma,(ij)}\big) 
\end{align*}
Due to (\ref{17111970}) and the fact that $v_{k,(i\cdot)}$'s are all centered and i.i.d, we have  $\mathbb{E} v_{\gamma, (a\cdot)}v_{\gamma, (b\cdot)} v_{\gamma, (c\cdot)}  \bar{v}_{\gamma,(ij)}\neq 0$ only when two of $a,b,c$ are $i$ and one is $j$, or two of them are $j$ and one is $i$. We only show the details for the estimates in the following case: $a=b=i$, $c=j$. All the other cases can be done analogously. More specifically, we will show in details the following estimate
\begin{align}
&\Big|\frac{1}{M^2} \sum_{i<j} (TA_\gamma(z_1)T')_{ii}  (T B_\gamma(z_2))_{j, (ij)}\Big|\prec n^{-\frac12+\varepsilon}.\label{171128106}
\end{align}
 Recall the definitions $A_\gamma=(\Theta_\gamma^{(\gamma)})' (G_\gamma^{(\gamma)})^2\Theta_\gamma^{(\gamma)}$ and $B_\gamma=(\Theta_\gamma^{(\gamma)})' G_\gamma^{(\gamma)}\Theta_\gamma^{(\gamma)}$ from (\ref{17110230}).  Similarly to the strategy we used in the proof of (\ref{17112801}), to ease the presentation, we only show the details of the proof with $\Theta_\gamma^{(\gamma)}$ and $G_\gamma^{(\gamma)}$ replaced by $\Theta$ and $G$, respectively, i.e., we will prove the estimate
\begin{align}
&\Big|\frac{1}{M^2} \sum_{i<j} (T\Theta' (G(z_1))^2\Theta T')_{ii}  (T \Theta' G(z_2)\Theta)_{j, (ij)}\Big|\prec n^{-\frac12+\varepsilon}. \label{171128103}
\end{align}
 Using (\ref{17112635}), we can then write 
\begin{align}
&\frac{1}{M^2} \sum_{i<j} (T\Theta' (G(z_1))^2\Theta T')_{ii}  (T \Theta' G(z_2)\Theta)_{j, (ij)}\nonumber\\
&=\frac{1}{M^2} \sum_{i<j} \big(\sum_{\alpha}\mb{\theta}_{(i\alpha)}'\big)G^2(z_1) \big(\sum_{\alpha}\mb{\theta}_{(i\alpha)}\big)  \big(\sum_{\alpha}\mb{\theta}_{(j\alpha)}'\big)G(z_2)\mb{\theta}_{(ij)}\nonumber\\
&= \frac{1}{M^2}  \mr{Tr} \bigg(G^2(z_1) \sum_i \big(\sum_{\alpha}\mb{\theta}_{(i\alpha)}\big)\big(\sum_{\alpha}\mb{\theta}_{(i\alpha)}'\big)  \Big( \sum_{j=i+1}^n\big(\sum_{\alpha}\mb{\theta}_{(j\alpha)}'\big)G(z_2)\mb{\theta}_{(ij)}\Big)\bigg). \label{17112695}
\end{align}
Now, we claim that 
\begin{align}
\Big\|\sum_i \big(\sum_{\alpha}\mb{\theta}_{(i\alpha)}\big)\big(\sum_{\alpha}\mb{\theta}_{(i\alpha)}'\big) \Big\|\prec n, \label{17112650}
\end{align}
and 
\begin{align}
\Big|\sum_{j=i+1}^n\big(\sum_{\alpha}\mb{\theta}_{(j\alpha)}'\big)G(z_2)\mb{\theta}_{(ij)}\Big|\prec n^{\frac{7}{6}+\varepsilon}.  \label{17120160}
\end{align}
Then, using (\ref{17112650}) and (\ref{17120160}) to (\ref{17112695}), we conclude 
\begin{align*}
&\Big| \frac{1}{M^2} \sum_{i<j} (T\Theta' (G(z_1))^2\Theta T')_{ii}  (T \Theta' G(z_2)\Theta)_{j, (ij)}\Big|\nonumber\\
&\leq \frac{1}{M^2}  \mr{Tr} |G(z_1)|^2 \max_{i}\Big| \sum_{j=i+1}^n\big(\sum_{\alpha}\mb{\theta}_{(j\alpha)}'\big)G(z_2)\mb{\theta}_{(ij)}\Big|\nonumber\\
&\qquad\times \Big\|\sum_i \big(\sum_{\alpha}\mb{\theta}_{(i\alpha)}\big)\big(\sum_{\alpha}\mb{\theta}_{(i\alpha)}'\big)\Big\|  \prec n^{-\frac{11}{6}+\varepsilon} \mr{Tr} |G(z_1)|^2\prec n^{-\frac12+\varepsilon},  
\end{align*}
where in the last step we use the fact 
\begin{align*}
\mr{Tr} |G(z_1)|^2=\frac{1}{\eta} \Im \mr{Tr} G(z_1)= \frac{p}{\eta} \Im m(z_1)\prec n^{\frac43+\varepsilon},
\end{align*}
which follows from Lemma \ref{lem. local law for K gamma},  Lemma \ref{lem.properties of m} and the assumption on $z_1$ in Lemma \ref{lem. technical estimates}.  This proves (\ref{171128103}). The proof of (\ref{171128106}) can be done similarly. 
Therefore, what remains is to prove (\ref{17112650}) and (\ref{17120160}). We start with (\ref{17112650}).
Again, using the decomposition in  (\ref{17120205}), we can write
\begin{align}
&\sum_i \big(\sum_{\alpha}\mb{\theta}_{(i\alpha)}\big)\big(\sum_{\alpha}\mb{\theta}_{(i\alpha)}'\big)=n^2\sum_i \mb{\theta}_{(i\cdot)}\mb{\theta}_{(i\cdot)}'\nonumber\\
&\qquad+n\sum_i \mb{\theta}_{(i\cdot)}\Big(- \sum_\alpha \mb{\theta}_{(\alpha\cdot)}+ \sum_\alpha \bar{\mb{\theta}}_{(i \alpha)}\Big)'\nonumber\\
&\qquad+ n\sum_i \Big(-\sum_\alpha \mb{\theta}_{(\alpha\cdot)}+\sum_\alpha \bar{\mb{\theta}}_{(i \alpha)}\Big)\mb{\theta}_{(i\cdot)}' \nonumber\\
&\qquad+ \sum_i\Big(\sum_\alpha \mb{\theta}_{(\alpha\cdot)}- \sum_\alpha \bar{\mb{\theta}}_{(i \alpha)}\Big) \Big( \sum_\alpha \mb{\theta}_{(\alpha\cdot)}- \sum_\alpha \bar{\mb{\theta}}_{(i \alpha)}\Big)'. \label{17112671}
\end{align}
First, using the fact 
$
\|\sum_i \mb{\theta}_{(i\cdot)}\|\prec 1
$
together with (\ref{17112680}), we have
\begin{align}
\|(\sum_i \mb{\theta}_{(i\cdot)})(\sum_i \mb{\theta}_{(i\cdot)})'\|\prec 1,\qquad \| (\sum_\alpha \mb{\theta}_{(\alpha\cdot)})(\sum_\alpha \bar{\mb{\theta}}_{(i \alpha)})'\|\prec 1.  \label{17112670}
\end{align}
Plugging (\ref{17112670}), (\ref{17112680}) and the fact $\|\sum_{\ell}\mb{\theta}_{(\ell \cdot)}\mb{\theta}_{(\ell \cdot)}'\|\prec \frac{1}{n}$ into (\ref{17112671}) yields
\begin{align*}
&\sum_i \big(\sum_{\alpha}\mb{\theta}_{(i\alpha)}\big)\big(\sum_{\alpha}\mb{\theta}_{(i\alpha)}'\big)\nonumber\\
&= n\sum_i \mb{\theta}_{(i\cdot)}\Big( \sum_\alpha \bar{\mb{\theta}}_{(i \alpha)}\Big)' + n\sum_i \Big( \sum_\alpha \bar{\mb{\theta}}_{(i \alpha)}\Big)\mb{\theta}_{(i\cdot)}'+O_\prec(n),
\end{align*}
where the error term $O_\prec(n)$ represents some matrix with operator norm stochastically dominated by $n$.  Further, we write
\begin{align}
\sum_i \mb{\theta}_{(i\cdot)}\Big(\sum_\alpha \bar{\mb{\theta}}_{(i \alpha)}\Big)'= \Big(\mb{\theta}_{(1\cdot)}, \cdots, \mb{\theta}_{(n\cdot)} \Big)  \Big( \sum_\alpha \bar{\mb{\theta}}_{(1 \alpha)}, \cdots,  \sum_\alpha \bar{\mb{\theta}}_{(n \alpha)}\Big)'=: \Theta_{\Cdot[2]} \bar{\Theta}'_+. \label{171126110}
\end{align}
Observe that
\begin{align}
\|\sum_i \mb{\theta}_{(i\cdot)}\Big( \sum_\alpha \bar{\mb{\theta}}_{(i \alpha)}\Big)'\|=\|\Theta_{\Cdot[2]} \bar{\Theta}_+\|\leq \|\Theta_{\Cdot[2]}\| \|\bar{\Theta}_+\|\prec 1, \label{171126120}
\end{align}
where we use the large deviation for the largest eigenvalue of the sample covariance matrices again to conclude $\|\Theta_{\Cdot[2]}\|\prec \frac{1}{\sqrt{n}}$, and use (\ref{17112680}) to conclude that 
\begin{align}
\|\bar{\Theta}_+\|=\sqrt{\|\bar{\Theta}_+\bar{\Theta}_+'\|}=\sqrt{\|\sum_{i} (\sum_\alpha \bar{\mb{\theta}}_{(i \alpha)})(\sum_\alpha \bar{\mb{\theta}}_{(i \alpha)})' \|}\prec \sqrt{n}.  \label{171126200}
\end{align} 
Hence, we complete the proof of  (\ref{17112650}).

Next, we prove (\ref{17120160}).  Note that 
\begin{align}
&\Big|\sum_{j=i+1}^n\big(\sum_{\alpha}\mb{\theta}_{(j\alpha)}'\big)G(z_2)\mb{\theta}_{(ij)}\Big| \leq  \Big| \big(\sum_{j=i+1}^n\sum_{\alpha}\mb{\theta}_{(j\alpha)}'\big)G(z_2)\mb{\theta}_{(i\cdot)}\Big|\nonumber\\
&\qquad\qquad +\Big|\mr{Tr}G(z_2) \sum_{j=i+1}^n\mb{\theta}_{(j\cdot)} \big(\sum_{\alpha}\mb{\theta}_{(j\alpha)}'\big)\Big|+ \Big| \mr{Tr}G(z_2) \sum_{j=i+1}^n\bar{\mb{\theta}}_{(ij)} \big(\sum_{\alpha}\mb{\theta}_{(j\alpha)}'\big)\Big|. \label{17120180}
\end{align}
From (\ref{17113070}), we can get $\|\sum_{j=i+1}^n\sum_{\alpha}\mb{\theta}_{(j\alpha)}\|_{\infty}\prec \sqrt{n}$. Therefore, we have the bound $\|\sum_{j=i+1}^n\sum_{\alpha}\mb{\theta}_{(j\alpha)}\|\prec n$, which together with 
$ \|\mb{\theta}_{(i\cdot)}\|\prec \frac{1}{\sqrt{n}}$
implies
\begin{align}
\Big| \big(\sum_{j=i+1}^n\sum_{\alpha}\mb{\theta}_{(j\alpha)}'\big)G(z_2)\mb{\theta}_{(i\cdot)}\Big|\prec \sqrt{n}\|G(z_2)\|\leq\sqrt{n}\eta^{-1}= n^{\frac{7}{6}+\varepsilon}. \label{17120177}
\end{align}
Next, using (\ref{17120170}), we have 
\begin{align}
\Big|\mr{Tr}G(z_2) \sum_{j=i+1}^n\mb{\theta}_{(j\cdot)} \big(\sum_{\alpha}\mb{\theta}_{(j\alpha)}'\big)\Big|\prec \mr{Tr}|G(z_2)|\prec n.  \label{17120178}
\end{align}
Similarly, applying (\ref{17120176}), we have 
\begin{align}
\Big| \mr{Tr}G(z_2) \sum_{j=i+1}^n\bar{\mb{\theta}}_{(ij)} \big(\sum_{\alpha}\mb{\theta}_{(j\alpha)}'\big)\Big|\prec \mr{Tr}|G(z_2)|\prec n.  \label{17120179}
\end{align}
Combining (\ref{17120180})-(\ref{17120179}), we obtain (\ref{17120160}). 

Notice that in the proof above, we only used the local law and the crude bound $\|G(z)\|\leq \frac{1}{\eta}$. These technical inputs still work when we replace $z$, $z_1$ and $z_2$ by their complex conjugates.   Hence, the above proof still works if we replace some or all of $z, z_1,z_2$ by their complex conjugates.
This completes the proof of Lemma \ref{lem. technical estimates}. 
\end{proof}

\begin{proof}[Proof of Lemma \ref{lem. local law for K hat t}] Observe that $\wh{K}_t$ is a shift of  the matrix $(U+tH)(U+tH)'$. Hence, it suffices to show the local law for 
the latter. In addition, the matrix $(U+tH)(U+tH)'$ share the same structure with $\wh{K}$. Again, the proof of the local law of $(U+tH)(U+tH)'$ only relies on the large deviation estimates for linear and quadratic forms of the rows of $U$ and $H$. We omit the details and conclude the proof. 
\end{proof}

\begin{proof}[Proof of the last estimate in (\ref{17100901})]
 Similarly to (\ref{18021820}), we set 
\begin{align*}
\mc{Q}:= \mathrm{Tr}\big((HH'-\frac13 ) \wh{G}_t^2, \qquad \mf{n}^{(k,\ell)}:= \mc{Q}^k\overline{\mc{Q}}^\ell.
\end{align*}
Analogously to (\ref{18021833}), we have 
\begin{align*}
\mathbb{E} \big(\mf{n}^{(k,k)}\big) &= \mathbb{E}\Big(\mathrm{Tr}HH' \wh{G}_t^2 \mf{n}^{(k-1,k)}\Big)- \frac13\mathbb{E}\Big(\mathrm{Tr}\wh{G}_t^2 \mf{n}^{(k-1,k)}\Big)\nonumber\\
&=\sum_{a,(ij)}\mathbb{E} \Big(h_{a, (ij)}\big(H' \wh{G}_t^2\big)_{(ij),a} \mf{n}^{(k-1,k)}\Big)- \frac13\mathbb{E}\Big(\mathrm{Tr}\wh{G}_t^2 \mf{n}^{(k-1,k)}\Big)\nonumber\\
&=\frac{1}{3M} \sum_{a,(ij)} \mathbb{E} \Big(\Big(H'\frac{ \partial \wh{G}_t^2 }{\partial h_{a, (ij)}} \Big)_{(ij),a} \mf{n}^{(k-1,k)}\Big)\nonumber\\
&\qquad+\frac{k-1}{3M} \sum_{a, (ij)} \mathbb{E}\Big(\big(H' \wh{G}_t^2\big)_{(ij),a} \frac{\partial \mc{Q}}{\partial h_{a, (ij)}} \mf{n}^{(k-2,k)} \Big)\nonumber\\
&\qquad +\frac{k}{3M} \sum_{a, (ij)} \mathbb{E}\Big(\big(H' \wh{G}_t^2\big)_{(ij),a} \frac{\partial \overline{\mc{Q}}}{\partial h_{a, (ij)}} \mf{n}^{(k-1,k-1)} \Big).
\end{align*}
The remaining estimates  can be done similarly to those for the terms in the RHS of (\ref{18021833}). The main difference is: instead of the matrix $HU'$ in those terms with five $\wh{G}_t$ factors in (\ref{17110502}), we will have  the matrix $HH'-\frac{1}{3}I_M$.  Note that the factor $\frac{1}{\sqrt{n}}$ in the term $\frac{1}{M\sqrt{n}}\mr{Tr} |\wh{G}_t|^5$ in the first inequality of (\ref{171129100}) comes from  the first bound in (\ref{171104100}). We observe that the same bound  holds  for the matrix $HH'-\frac{1}{3}I_M$ as well, according to Proposition \ref{pro.large deviation for operator norm}. The rest of the proof is similar to that for the first estimate in (\ref{17100901}). We thus omit the details. 
\end{proof}

\noindent {\bf S4:  Some basic tools}

In this section, we collect some basic technical tools. 

\noindent $\bullet$ {\it Rank-one perturbation formula}

At various places, we use the following fundamental perturbation formula: for $\bs{\alpha},\bs{\beta}\in\C^N$ and an invertible $D\in M_N(\C)$, we have
\begin{align}
\big(D+\bs{\alpha}\bs{\beta}^*\big)^{-1}=D^{-1}-\frac{D^{-1}\bs{\alpha}\bs{\beta}^*D^{-1}}{1+\bs{\beta}^*D^{-1}\bs{\alpha}}\,, \label{091002Kevin}
\end{align}
as can be checked readily. A standard application of~\eqref{091002Kevin} is recorded in the following lemma. 
\begin{lem} \label{17101810}
Let $D\in M_N(\C)$ be Hermitian and let $Q\in M_N(\C)$ be arbitrary. Then, for any finite-rank Hermitian matrix $R\in M_N(\C)$ and $z=E+\ii\eta\in\C^+\,$, we have
\begin{align}
\left|\mr{Tr} \left(Q\big(D+R-z\big)^{-1}\right)-\mr{Tr} \left(Q(D-z)^{-1}\right)\right| &\leq \frac{\mathrm{rank}(R)\|Q\|}{\eta}.
 \label{091002}
\end{align}
\end{lem}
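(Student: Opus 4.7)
Plan. The idea is to reduce the general finite-rank perturbation to a sequence of rank-one perturbations via the spectral decomposition of $R$, and then estimate each rank-one increment using the Sherman--Morrison identity~(\ref{091002Kevin}) together with the standard ``imaginary part'' trick.

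Step 1 (reduction to rank one). Since $R$ is Hermitian, I would diagonalize it as $R=\sum_{k=1}^{r}\lambda_k\mathbf{u}_k\mathbf{u}_k^{*}$ with $r=\mathrm{rank}(R)$, $\lambda_k\in\R\setminus\{0\}$ and unit vectors $\mathbf{u}_k\in\C^N$. Define $D_0:=D$ and $D_k:=D_{k-1}+\lambda_k\mathbf{u}_k\mathbf{u}_k^{*}$, so that $D_r=D+R$ and each $D_k$ is Hermitian; in particular the resolvents $G_k:=(D_k-z)^{-1}$ exist and satisfy $\|G_k\|\le \eta^{-1}$. By the telescoping identity it suffices to prove, for every $k\in\llbracket 1,r\rrbracket$, the single-step bound
\begin{align*}
\bigl|\mathrm{Tr}(QG_k)-\mathrm{Tr}(QG_{k-1})\bigr|\le \frac{\|Q\|}{\eta}.
\end{align*}

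Step 2 (Sherman--Morrison and numerator bound). Applying~(\ref{091002Kevin}) with the role of $D$ played by $D_{k-1}-z$, $\bs{\alpha}\leftarrow\lambda_k\mathbf{u}_k$ and $\bs{\beta}\leftarrow\mathbf{u}_k$, and then taking the trace against $Q$ (using the cyclicity of the trace) yields
\begin{align*}
\mathrm{Tr}(QG_k)-\mathrm{Tr}(QG_{k-1})=-\frac{\lambda_k\,\mathbf{u}_k^{*}G_{k-1}QG_{k-1}\mathbf{u}_k}{1+\lambda_k\,\mathbf{u}_k^{*}G_{k-1}\mathbf{u}_k}.
\end{align*}
By Cauchy--Schwarz the numerator is bounded by $|\lambda_k|\,\|Q\|\,\|G_{k-1}\mathbf{u}_k\|\,\|G_{k-1}^{*}\mathbf{u}_k\|$, and since $D_{k-1}$ is Hermitian one has $G_{k-1}G_{k-1}^{*}=G_{k-1}^{*}G_{k-1}=|D_{k-1}-z|^{-2}$, so $\|G_{k-1}^{*}\mathbf{u}_k\|=\|G_{k-1}\mathbf{u}_k\|$. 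Hence the numerator is at most $|\lambda_k|\,\|Q\|\,\|G_{k-1}\mathbf{u}_k\|^{2}$.

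Step 3 (denominator bound and conclusion). The key point is that the denominator has the same factor $\|G_{k-1}\mathbf{u}_k\|^{2}$ coming out of its imaginary part. Indeed, for $z=E+\ii\eta$ and $D_{k-1}$ Hermitian, $\mathrm{Im}(\mathbf{u}_k^{*}G_{k-1}\mathbf{u}_k)=\eta\,\mathbf{u}_k^{*}|D_{k-1}-z|^{-2}\mathbf{u}_k=\eta\,\|G_{k-1}\mathbf{u}_k\|^{2}$, so
\begin{align*}
\bigl|1+\lambda_k\,\mathbf{u}_k^{*}G_{k-1}\mathbf{u}_k\bigr|\;\ge\;\bigl|\mathrm{Im}(\lambda_k\,\mathbf{u}_k^{*}G_{k-1}\mathbf{u}_k)\bigr|\;=\;|\lambda_k|\,\eta\,\|G_{k-1}\mathbf{u}_k\|^{2}.
\end{align*}
Dividing the numerator bound by this lower bound gives the single-step estimate $\|Q\|/\eta$ (the degenerate cases $\lambda_k=0$ or $G_{k-1}\mathbf{u}_k=0$ give a zero increment trivially). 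Summing the $r$ increments yields~(\ref{091002}).

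There is no real obstacle: the only thing worth emphasizing is the cancellation of $\|G_{k-1}\mathbf{u}_k\|^{2}$ between numerator and denominator, which is what delivers the $\eta^{-1}$ bound per rank-one step rather than the na\"ive $\eta^{-2}$ one would get from bounding resolvents by operator norm.
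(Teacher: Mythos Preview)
Your proof is correct and follows essentially the same approach as the paper: spectrally decompose $R$ into rank-one pieces, telescope, apply Sherman--Morrison~(\ref{091002Kevin}) at each step, and use the cancellation between $\|G_{k-1}\mathbf{u}_k\|^2$ in the numerator and $|\lambda_k|\eta\|G_{k-1}\mathbf{u}_k\|^2=\big|\Im(\lambda_k\,\mathbf{u}_k^*G_{k-1}\mathbf{u}_k)\big|$ in the denominator. The only cosmetic difference is that the paper absorbs $|\lambda_k|$ into the vector (writing the rank-one pieces as $\pm\bs{\alpha}\bs{\alpha}^*$), whereas you keep $\lambda_k$ as an explicit real scalar; both are fine.
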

\begin{proof}
Let $z\in\C^+$ and $\bs{\alpha}\in\C^N$. Then from~\eqref{091002Kevin} we have
\begin{align}\label{091002000}
\mr{Tr} \Big(Q\big(D\pm \bs{\alpha}\bs{\alpha}^*-z\big)^{-1}\Big)-\mr{Tr} \Big(Q(D-z)^{-1} \Big)=\pm\frac{\bs{\alpha}^*(D-z)^{-1}Q(D-z)^{-1}\bs{\alpha}}{1\pm\bs{\alpha}^*(D-z)^{-1}\bs{\alpha}}\,.
\end{align}
We can thus estimate
\begin{align}
&\Big|\mr{Tr} \Big(Q\big(D\pm\bs{\alpha}\bs{\alpha}^*-z\big)^{-1}\Big)-\mr{Tr} \Big(Q(D-z)^{-1} \Big) \Big|\nonumber\\
&\leq {\|Q\|}\frac{\|(D-z)^{-1}\bs{\alpha}\|^2}{\big|1\pm \bs{\alpha}^*(D-z)^{-1}\bs{\alpha}\big|}= \frac{\|Q\|}{\eta}\frac{\bs{\alpha}^* \Im (D-z)^{-1}\bs{\alpha}}{\big|1\pm\bs{\alpha}^*(D-z)^{-1}\bs{\alpha}\big|}\leq \frac{\|Q\|}{\eta}\,. 
 \label{0910021111}
\end{align}
Since $R=R^*\in M_N(\C)$ has finite rank, we can write $R$ as a finite sum of rank-one Hermitian matrices of the form $\pm \bs{\alpha}\bs{\alpha}^*$. Thus iterating ~\eqref{0910021111} we get~\eqref{091002}.
\end{proof}

\noindent $\bullet$ {\it Resolvent identities}

The following lemma can be proved via elementary linear algebra; see Lemma 3.2 of  \cite{EYY}  for instance.
\begin{lem}[Resolvent identities] \label{lem.resolvent identity} We have the following identities
\begin{align}
&G_{ij}(z)= zG_{ii}(z) G_{jj}^{(i)}(z) \mb{v}_i \mc{G}^{(ij)}(z) \mb{v}_j', \qquad i\neq j \label{17111402} \\
& G_{ij}(z)= G_{ij}^{(k)}(z)+\frac{G_{ik}(z)G_{kj}(z)}{G_{kk}(z)}, \qquad i,j\neq k,  \label{17111403}\\
&  \mb{v}_i \mc{G}^{(i)}(z) \mb{v}_i'=-\frac{1}{zG_{ii}(z)}-1, \label{171202100}
\end{align}
\end{lem}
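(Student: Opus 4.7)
The plan is to derive all three identities uniformly from the Schur complement (block matrix inversion) formula applied to $K-z$ with three different block splittings, together with the single algebraic identity
\[
(\Theta^{(I)})'\, G^{(I)}(z)\, \Theta^{(I)} \;=\; I_M + z\,\mathcal{G}^{(I)}(z), \qquad I\subset \llbracket 1,p\rrbracket,
\]
which is just a rewriting of the trivial commutation $G^{(I)}\Theta^{(I)} = \Theta^{(I)}\mathcal{G}^{(I)}$. No probabilistic input or distributional property of $\mb{v}_k$ is needed.

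For (\ref{171202100}), I would partition the rows/columns of $K-z$ into $\{i\}$ and its complement. The Schur complement formula gives
\[
G_{ii}^{-1} \;=\; \mb{v}_i\mb{v}_i' - z - \mb{v}_i (\Theta^{(i)})' G^{(i)} \Theta^{(i)} \mb{v}_i'.
\]
Substituting the algebraic identity above with $I=\{i\}$ produces $G_{ii}^{-1} = \mb{v}_i\mb{v}_i' - z - \mb{v}_i\mb{v}_i' - z\,\mb{v}_i\mathcal{G}^{(i)}\mb{v}_i' = -z(1+\mb{v}_i\mathcal{G}^{(i)}\mb{v}_i')$; rearranging yields (\ref{171202100}). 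Note that the terms $\mb{v}_i\mb{v}_i'$ cancel, so one need not even invoke the normalization $\mb{v}_i\mb{v}_i'=1$ from (\ref{17120201}) at this step. For (\ref{17111402}), I would apply the same Schur complement but with the $2\times 2$ block $\{i,j\}$ versus its complement. By the same cancellation $\mb{v}_a\mb{v}_b' - \mb{v}_a\mb{v}_b'=0$ for $a,b\in\{i,j\}$, the resulting expression for the inverse of the $\{i,j\}$-block of $G$ is
\[
\begin{pmatrix} G_{ii} & G_{ij} \\ G_{ji} & G_{jj}\end{pmatrix}^{-1}
\;=\; -z\begin{pmatrix} 1+\mb{v}_i\mathcal{G}^{(ij)}\mb{v}_i' & \mb{v}_i\mathcal{G}^{(ij)}\mb{v}_j' \\ \mb{v}_j\mathcal{G}^{(ij)}\mb{v}_i' & 1+\mb{v}_j\mathcal{G}^{(ij)}\mb{v}_j' \end{pmatrix}.
\]
Inverting this $2\times 2$ matrix by the adjugate formula and reading off the off-diagonal entry gives $G_{ij} = z\mb{v}_i\mathcal{G}^{(ij)}\mb{v}_j'/\Delta$ and $G_{ii} = -z(1+\mb{v}_j\mathcal{G}^{(ij)}\mb{v}_j')/\Delta$, where $\Delta$ denotes the determinant. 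Applying (\ref{171202100}) to the minor $K^{(i)}$ identifies $-z(1+\mb{v}_j\mathcal{G}^{(ij)}\mb{v}_j')$ with $1/G_{jj}^{(i)}$, so $G_{ii}G_{jj}^{(i)} = 1/\Delta$, which combines with the formula for $G_{ij}$ to give (\ref{17111402}).

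For (\ref{17111403}) I would perform the Schur complement of $A:=K-z$ with blocks $\{k\}$ and $S:=\llbracket 1,p\rrbracket\setminus\{k\}$. From $AG=I$ one reads off the standard relations
\[
G_{Sk} \;=\; -G_{kk}\, G_{SS}^{(k)} A_{Sk}, \qquad G_{kS} \;=\; -G_{kk}\,A_{kS}\,G_{SS}^{(k)},
\]
and Sherman--Morrison applied to $G_{SS}=(A_{SS}-A_{Sk}A_{kk}^{-1}A_{kS})^{-1}$ (rank-one perturbation of $A_{SS}$) yields $G_{SS} = G_{SS}^{(k)} + G_{kk}(G_{SS}^{(k)}A_{Sk})(A_{kS}G_{SS}^{(k)})$; reading off the $(i,j)$ entry for $i,j\in S$ and combining with the previous display gives $G_{ij}-G_{ij}^{(k)} = G_{ik}G_{kj}/G_{kk}$, which is (\ref{17111403}). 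I anticipate no real obstacle: the lemma is a purely linear-algebraic statement whose proof reduces to bookkeeping of Schur complements, with the only structural input being the commutation $\Theta' G\Theta = I + z\mathcal{G}$; the argument is essentially identical to Lemma~3.2 of~\cite{EYY} and is included only for the reader's convenience.
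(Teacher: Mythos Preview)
Your proof is correct and follows the standard Schur-complement route that the paper implicitly invokes: the paper does not actually give a proof of this lemma, but only cites Lemma~3.2 of~\cite{EYY} as elementary linear algebra, and your derivation is precisely that argument spelled out in detail. The key algebraic identity $(\Theta^{(I)})'G^{(I)}\Theta^{(I)}=I_M+z\mathcal{G}^{(I)}$ and the subsequent Schur/adjugate manipulations are the expected steps, so there is nothing to add.
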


\noindent $\bullet$ {\it Properties of $\underline{m}$}

In the following lemma, we collect some basic properties of the function $\underline{m}(z):\mathbb{C}^+\to\mathbb{C}^+$ defined in (\ref{17113014}).   Let 
$
\kappa\equiv \kappa(E):=|E-\lambda_{+,c_n}|.
$
\begin{lem}\label{lem.properties of m}  For any $z\in E+\ii\eta\in \mc{D}(\varepsilon)$, we have 
\begin{align}
&|\underline{m}(z)|\sim 1,\label{17113020}\\
&\Im \underline{m}(z)\sim \left\{
\begin{array}{ccc}
\sqrt{\kappa+\eta}, & \text{if } E\leq \lambda_{+,c_n}\\\\
\frac{\eta}{\sqrt{\kappa+\eta}}, & \text{if } E\geq \lambda_{+, c_n} 
\end{array}
\right. \label{17113021}
\end{align}
\end{lem}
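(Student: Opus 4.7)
The plan is to reduce both bounds to the standard edge estimates for the classical Marchenko--Pastur Stieltjes transform, exploiting the affine relation in Theorem \ref{thm. global law of K}. A direct change of variable $y = \frac{3}{2}x - \frac{1}{2}$ in the spectral representation $\underline{m}(z) = \int (x-z)^{-1}\, dF^K_{c_n}(x)$ yields the identity
\begin{align*}
\underline{m}(z) \;=\; \frac{3}{2}\, m_{c_n}\!\left(\tfrac{3}{2} z - \tfrac{1}{2}\right),
\end{align*}
where $m_{c_n}$ denotes the Stieltjes transform of the Marchenko--Pastur law $F_{c_n}$. This map sends $\lambda_{+,c_n}$ precisely to $d_{+,c_n}$ (by (\ref{17110101})) and rescales distances to the edge by a factor of $\frac{3}{2}$; in particular, writing $w := \frac{3}{2}z - \frac{1}{2}$, one has $|\Re w - d_{+,c_n}|\sim \kappa$ and $\Im w \sim \eta$, and the sign of $E - \lambda_{+,c_n}$ matches the sign of $\Re w - d_{+,c_n}$.

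With this reduction in place, both claims of Lemma \ref{lem.properties of m} follow from well-known facts about $m_{c_n}$ on the image of $\mathcal{D}(\varepsilon)$: the boundedness $|m_{c_n}(w)|\sim 1$ (immediate from the explicit formula for $m_{c_n}$ together with the fact that $\Re w$ is bounded and bounded away from $0$ on $\mathcal{D}(\varepsilon)$), and the square-root edge behavior $\Im m_{c_n}(w)\sim \sqrt{\kappa'+\eta'}$ inside the spectrum versus $\Im m_{c_n}(w)\sim \eta'/\sqrt{\kappa'+\eta'}$ outside. Both estimates are documented, for instance, in Lemma 3.3 of \cite{PY}. Multiplying by $\frac{3}{2}$ then produces the bounds stated in the lemma.

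A self-contained alternative is to work directly from the quadratic equation (\ref{17113014}) via the quadratic formula. The coefficients $\frac{2}{3}c_n(z-\frac{1}{3})$ and $z - 1 + \frac{2}{3}c_n$ are of order one in modulus on $\mathcal{D}(\varepsilon)$ (since $E$ is bounded and bounded away from $\frac{1}{3}$), which yields $|\underline{m}(z)|\sim 1$ once the branch of the square root is chosen to enforce $\Im \underline{m}(z) > 0$. A short computation identifies the discriminant, up to a prefactor bounded above and below on $\mathcal{D}(\varepsilon)$, with the product $(z-\lambda_{+,c_n})(z-\lambda_{-,c_n})$; since $\lambda_{-,c_n}$ is at distance of order one from the region of interest, the behavior near $z=\lambda_{+,c_n}$ is dictated solely by the factor $z-\lambda_{+,c_n}$. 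The standard bookkeeping for $\sqrt{z-\lambda_{+,c_n}}$ along the real axis and just above it then yields the dichotomy $\sim\sqrt{\kappa+\eta}$ versus $\sim\eta/\sqrt{\kappa+\eta}$. No substantive obstacle is expected: the only point requiring care is selecting the correct branch so as to preserve $\Im \underline{m}(z)\ge 0$ on $\mathbb{C}^+$, which is a routine check.
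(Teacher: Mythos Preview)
The paper states Lemma \ref{lem.properties of m} without proof, treating it as a standard fact about Stieltjes transforms of measures with a square-root edge. Your proposal is correct, and your first approach---reducing to the classical Marchenko--Pastur Stieltjes transform via the affine relation $\underline{m}(z)=\tfrac{3}{2}\,m_{c_n}(\tfrac{3}{2}z-\tfrac{1}{2})$ coming from Theorem \ref{thm. global law of K}, then invoking Lemma 3.3 of \cite{PY}---is exactly the natural justification the paper has in mind (it explicitly identifies $\underline{m}$ as the Stieltjes transform of $F^K_{c_n}$ right after (\ref{17113014})). Your alternative direct analysis of the quadratic (\ref{17113014}) is also fine and equally routine.
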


\noindent $\bullet$ {\it Operator norm of sample covariance matrices}

Here we record a well-known bound on the operator norm (largest eigenvalue) of sample covariance matrix. We refer to Theorem 2.10 of \cite{BEKYY} for instance.  
\begin{pro}[Theorem 2.10, \cite{BEKYY}]\label{pro.large deviation for operator norm} Let $X=(x_{ij})\in \mathbb{C}^{\mathsf{M}\times \mathsf{N}}$ be a random matrix with independent entries. Suppose that $\mathbb{E}x_{ij}=0$, $\mathbb{E}|x_{ij}|^2=\frac{1}{{\mathsf{N}}}$ and $\mathbb{E}|\sqrt{\mathsf{N}} x_{ij}|^q\leq C_q$ for some positive constant $C_q$ for all $i,j$ and given positive integer $q$.   Further, assume that $\mathsf{M}\equiv \mathsf{M}(\mathsf{N})$ satisfies $\mathsf{N}^{1/C}\leq \mathsf{M}\leq \mathsf{N}^C$ for some positive constant $C$.  Then we have 
\begin{align*}
\big|\|XX^*\|-\big(1+\sqrt{\frac{\mathsf{M}}{\mathsf{N}}}\big)^2\big|\prec \sqrt{\frac{\mathsf{M}}{\mathsf{N}}} (\min\{\mathsf{M}, \mathsf{N}\})^{-\frac23}. 
\end{align*}
\end{pro}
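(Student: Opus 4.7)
}

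The plan is to derive this edge estimate as a consequence of a strong local Marchenko–Pastur law for $XX^*$, following the resolvent/Schur-complement strategy of Erd\H{o}s–Yau–Yin that was invoked throughout the main body of the paper (and that is implemented in detail in \cite{BEKYY}). Set $N$ and $M$ as in the statement, write $c_N = M/N$, let $m_{\mathrm{MP}}(z)$ denote the Stieltjes transform of the MP law with parameter $c_N$, and let $m(z) = N^{-1}\mathrm{Tr}(XX^*-z)^{-1}$. Using the Schur complement formula as in \eqref{17091701} to isolate each row of $X$, and using the large deviation bound $|\mathbf{x}_k B \mathbf{x}_k^* - N^{-1}\mathrm{Tr}\,B| \prec \sqrt{\mathrm{Tr}|B|^2}/N$ for linear and quadratic forms of rows with genuinely independent entries (the analogue of Corollary~B.3 in \cite{EYY12}, now trivial since rows of $X$ are independent), one obtains the self-consistent equation for the diagonal resolvent entries $G_{kk}(z)$ up to an $o(1)$ stochastic error expressible in the control parameter $\Psi(z) = \sqrt{\Im m_{\mathrm{MP}}(z)/(N\eta)} + (N\eta)^{-1}$. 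This gives the entrywise local law $\max_{k,\ell}|G_{k\ell}(z)-\delta_{k\ell}m_{\mathrm{MP}}(z)|\prec \Psi(z)$ on the standard edge-inclusive domain.

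The next step is to upgrade this to the averaged local law $|m(z)-m_{\mathrm{MP}}(z)|\prec (N\eta)^{-1}$. This is achieved by a fluctuation averaging argument applied to $\frac{1}{N}\sum_k (\mathbf{x}_k B^{(k)}\mathbf{x}_k^* - N^{-1}\mathrm{Tr}\,B^{(k)})$, exactly in the spirit of Lemma \ref{lem. fluctuation averaging}; the argument is in fact \emph{simpler} here because the entries of $X$ within one row are independent and all $h$-th moments of $\sqrt{N}x_{ij}$ are bounded, so no martingale decomposition is needed. The domain on which the local law is proved should be extended in the $E$-direction slightly past $(1+\sqrt{c_N})^2$, of width $\gtrsim N^{-2/3}$, where the square-root behavior of $\Im m_{\mathrm{MP}}$ near the soft edge is captured by the standard bounds $\Im m_{\mathrm{MP}}(E+\ii\eta)\sim \sqrt{\kappa+\eta}$ for $E$ inside the bulk and $\sim \eta/\sqrt{\kappa+\eta}$ for $E$ outside (compare \eqref{17113021}).

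From the averaged local law and the upgraded entrywise law on this extended domain, standard Helffer--Sj\"ostrand/Hadamard-three-line arguments give eigenvalue rigidity $|\lambda_j(XX^*)-\gamma_j|\prec \min\{M,N\}^{-2/3} j^{-1/3}$ near the right edge, where $\gamma_j$ are the classical MP quantiles, exactly as in Proposition \ref{lem.weak law for R}(iii). Specializing to $j=1$ and using $\gamma_1 = (1+\sqrt{c_N})^2 + O_\prec(\min\{M,N\}^{-2/3})$ gives the stated bound on $\|XX^*\|$. One needs a crude a priori upper bound $\|XX^*\|\prec 1$ (e.g.\ by a simple net/moment argument exploiting the assumed finiteness of all moments of $\sqrt{N}x_{ij}$) to run the continuity argument extending the local law from $\eta\sim 1$ down to $\eta = N^{-1+\epsilon}$, just as in the proof of \eqref{17111720} sketched in Appendix \ref{appendix A}.

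The main obstacle, as always in this framework, is the fluctuation averaging step that trades the pointwise bound $\Psi(z)$ for the improved averaged bound $(N\eta)^{-1}$: without this improvement, the $N^{-2/3}$ edge scaling cannot be reached. In the present setting the difficulty is mitigated by the full independence of the entries of $X$, so the argument is a direct adaptation of that in \cite{BEKYY,PY}; this is precisely why the statement can be quoted verbatim from \cite{BEKYY} without further modification.
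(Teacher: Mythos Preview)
Your outline is a faithful sketch of how this result is actually established in the literature (local MP law via Schur complement and large deviation for quadratic forms, fluctuation averaging to upgrade to the strong local law, then rigidity at the edge), and there is no genuine gap in the strategy you describe. However, the paper does not give its own proof of this proposition: it is stated in Appendix~\ref{app.b} purely as a citation of Theorem~2.10 of \cite{BEKYY}, with no accompanying argument. So there is nothing to compare your proposal against in the paper itself; the paper simply imports the result as a black box. If anything, your write-up supplies more detail than the paper does here.
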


\
\
\
\

\noindent {\bf S5: More simulation study} In this section, we present more simulation results.
In Tables \ref{results2} and \ref{results3}, we state the  results of sizes and powers under the choices of $n=300$ and $n=900$, respectively, and four different values of $p$ can be chosen for each $n$.  Again, the simulation results are based on 1000 replications.
\begin{table}[!htbp]
\renewcommand{\arraystretch}{0.95}
\begin{center}
 \begin{tabular}{ccccccccccccccccccc} \hline
$p$  &$T_1$&$T_2$&$T_3$&$T_4$&$T_5$&$T_6$&$T_7$&  &$T_2$ &$T_4$&$T_6$&$T_7$&&$T_2$ &$T_4$ &$T_6$&$T_7$\\ \hline\\
   &\multicolumn{7}{c}{$\mathrm{H}_{0,1}$}&  & \multicolumn{4}{c}{$\mathrm{H}_{0,2}$} & & \multicolumn{4}{c}{$\mathrm{H}_{0,3}$} \\ \cline{2-8} \cline{10-13} \cline{15-18}\\
 100  &3.7    &6.1    &3.3   &3.8   &1      &1.7   &1.7&   &4.8   &3.5    &1.4   &1.7&   &5.5     &3.7  &1.4   &1.6\\
  200 &1.9    &4.5    &4.3   &4.1   &1.6   &1.8   &1.8&   &4.3   &4.1    &1.5   &1.8&   &6.2    &4      &1.6   &2.1\\
 280  &1.9    &5.7    &3.8   &4      &1.3   &1.5   &2.1&   &5.2   &3.1    &1.6   &2.1&   &5.5    &2.6   &2      &2.6\\
 400  &1       &5.1    &2.3   &3.6   &1.2   &1.3   &2.1&   &5.5   &3.7    &2.2   &3&      &5       &2.8   &2.8   &4.3\\ \\
  &\multicolumn{7}{c}{$\mathrm{H}_{a,1-1}$}&  & \multicolumn{4}{c}{$\mathrm{H}_{a,2-1}$} & & \multicolumn{4}{c}{$\mathrm{H}_{a,3-1}$} \\ \cline{2-8} \cline{10-13} \cline{15-18} \\
 100  &88.8  &91.3  &100   &100  &93.4     &94.1 &100 &  &92.5      &100    &91.7   &100   &&90.9  &100  &92.9  &100\\
  200 &31.8     &43     &100   &100  &24.5  &26.5 &92.8&  &40.3      &100    &27      &94.2  &&41.4  &100  &24.9  &93.8 \\ \\ 
  &\multicolumn{7}{c}{$\mathrm{H}_{a,1-2}$}  && \multicolumn{4}{c}{$\mathrm{H}_{a,2-2}$} & & \multicolumn{4}{c}{$\mathrm{H}_{a,3-2}$} \\ \cline{2-8} \cline{10-13} \cline{15-18}\\
 100  &86.6    &96.2   &6.3    &7.3    &99.8  &99.2   &99.2&   &55.7   &6.5  &57.3   &58   &&99.8   &7.2   &100   &100\\
200 &36.8     &66.7   &3       &3.7   &91.3  &86  &87.1&         &71.6   &5.9  &74.6   &75.2   &&88.3  &4.6   &97.3  &97.7\\
 280  &19.3   &47.5   &3.8  &4.7   &71.1  &63.2  &66.3&        &77.9   &4.5  &80.1   &80.9   &&68.2    &3.5   &88.1  &89.9\\
 400  &7.1    &25.4   &4.2      &4.6   &40.2  &33     &37&        &82.6   &5.5  &86.8    &86.9    &&44.6     &4.2   &68.9  &72.3\\ \hline
  \end{tabular}
  \caption{
\footnotesize{The sizes and powers (percentage) of $T_1$ to $T_7$ under different hypotheses and dimension $p$. Here we chose sample size $n=300$, $\delta=1$, $\tau_1=\tau_3=\frac32$ and $\tau_2=\frac{1}{40}$.  \label{results2}}
}
\end{center}
\end{table}

\begin{table}[!htbp]
\renewcommand{\arraystretch}{0.95}
\begin{center}
 \begin{tabular}{ccccccccccccccccccc} \hline
$p$  &$T_1$&$T_2$&$T_3$&$T_4$&$T_5$&$T_6$&$T_7$&  &$T_2$ &$T_4$&$T_6$&$T_7$&&$T_2$ &$T_4$ &$T_6$&$T_7$\\ \hline\\
   &\multicolumn{7}{c}{$\mathrm{H}_{0,1}$}&  & \multicolumn{4}{c}{$\mathrm{H}_{0,2}$} & & \multicolumn{4}{c}{$\mathrm{H}_{0,3}$} \\ \cline{2-8} \cline{10-13} \cline{15-18}\\
 300  &4.9    &6.3    &4.2   &3.8   &2.2      &2.5   &2.8&   &5   &4.1           &2.7   &3.3&   &5.2     &4.8  &2   &2.3\\
  600 &2.2    &4.7    &3.3    &3.5   &1        &2.1   &2.4&   &5.7   &4          &1.8   &2.5&   &3.8       &4.2  &2.5   &2.8\\
 840  &2         &5.6    &4.1   &4.6   &2.7     &2.8   &3.7&   &4.3   &4.7       &2.5   &2.9&   &6         &4.8   &2.1  &2.3\\
 1200  &1.2    &5.2    &3      &3.4      &2.5   &2.2   &2.9&   &4.4   &3.6      &3.2   &4.1&   &5.6     &2.9   &2.6  &3.2\\ \\
  &\multicolumn{7}{c}{$\mathrm{H}_{a,1-1}$}&  & \multicolumn{4}{c}{$\mathrm{H}_{a,2-1}$} & & \multicolumn{4}{c}{$\mathrm{H}_{a,3-1}$} \\ \cline{2-8} \cline{10-13} \cline{15-18} \\
 300  &88.9  &92.6        &100   &100  &100  &100   &100 &  &93.7      &100    &100  &100   &&90.3  &100  &100  &100\\
  600 &31.5     &42.7     &100   &100  &49.2  &48    &100&  &41.9      &100    &45.6   &100  &&42.1  &100  &47.5  &100 \\ \\ 
  &\multicolumn{7}{c}{$\mathrm{H}_{a,1-2}$}  && \multicolumn{4}{c}{$\mathrm{H}_{a,2-2}$} & & \multicolumn{4}{c}{$\mathrm{H}_{a,3-2}$} \\ \cline{2-8} \cline{10-13} \cline{15-18}\\
 300  &91.2    &99.7   &4.6    & 4.5   &100   &100   &100&      &99.5   &11.3   &99.8   &99.8   &&100      &6.5   &100   &100\\
600 &37.3     &73.4   &4.8  &4.9       &100   &99.8  &99.9&     &100  &11.8     &100   &100     &&93.8     &6.1   &100  &100\\
 840  &18.7   &48.2   &3.5  &4.1       &95.8   &91.8  &92.2&    &100  &12        &100   &100     &&76.5     &3.4   &99.9  &99.9\\
 1200  &6.9    &27   &4.3      &4.9     &68.5   &56.4     &59.4&  &100   &12.2   &100    &100    &&45.2     &4.1   &94.9  &95.2\\ \hline
  \end{tabular} 
  \caption{
\footnotesize{The sizes and powers (percentage) of $T_1$ to $T_7$ under different hypotheses and dimension $p$. Here we chose sample size $n=900$, $\delta=1$, $\tau_1=\tau_3=\frac32$ and $\tau_2=\frac{1}{40}$. \label{results3}}
}
\end{center}
\end{table}

\newpage
In the sequel, we consider another type of alternative: uncorrelated but dependent data. More specifically, we consider the following 

\vspace{1ex}
\noindent $\bullet$ $\mathrm{H}_{a,4}$: Let $\{x_i\}_{i=1}^p$ be i.i.d $N(0,1)$. Let $w_1=x_1^2-1$
and $w_2=-x_1^4+6x_1^2-3$. Similarly, set $w_3=x_2^2-1$ and $w_4=-x_2^4+6x_2^2-3$.
Finally, let $w_i=x_i$ for all $i\geq 5$.  

It is easy to check that $w_i$'s are uncorrelated but dependent. However, the dependence structure is rather local, i.e., only $w_i$ is dependent of $w_{i+1}$ for $i=1,2$.    The following table summarizes the powers of 7 statistics under the alternative $\mathrm{H}_{a,4}$. 

\begin{table}[!htbp]
\begin{center}
 \begin{tabular}{ccccccccccccccccccc} \hline
$(p,n)$  &&$T_1$&$T_2$&$T_3$&$T_4$&$T_5$&$T_6$&$T_7$ \\ \hline
 (100,300) & &10.4  &77        &52.6    &100  &5  &65.7   &100 &  \\
  (200,300) &&4.3     &30.9     &51      &100  &1.5  &14.4    &67.9&  \\ 
 (200,600)  &&5.3    &77.2   &53.2      &100  &3.5   &89.9   &100&   \\
(400,600) &&3.2     &32.1   &50.9       &100   &1.8   &18.7  &88.2&    \\
(300,900)  &&5.8   &78.1   &48.5        &100   &2.8   &96.9  &100&  \\
 (600,900)  &&3.4    &33.1   &48.1      &100    &1.4   &25.2     &96.2&  \\ \hline
  \end{tabular} 
  \caption{
\footnotesize{The  powers (percentage) of $T_1$ to $T_7$ under $\mathrm{H}_{a,4}$ \label{results4}}
}
\end{center}
\end{table}

Notice that under $\mathrm{H}_{a,4}$, the performance of all parametric statistics constructed from the matrix $R$, i.e., $T_1$, $T_3$ and $T_5$ perform poorly. Among all nonparametric  statistics, $T_4$ and $T_7$ outperform the others.

In Fig \ref{fig1}-\ref{fig6}, we plot the curves for powers of 7 statistics under 6 alternatives, with various choices of  parameters $\delta$, $\tau_1$, $\tau_2$ and $\tau_3$. The simulation was done with the choice $(p,n)=(400,600)$ and 1000 replications. More specifically, in Fig \ref{fig1}, \ref{fig3}, \ref{fig5}, the $x$-axis represents the value of $\delta$, and in Fig \ref{fig2}, \ref{fig4}, \ref{fig6}, the $x$-axis represents the value of $\tau_1$, $\tau_2$ and $\tau_3$, respectively. In all figures, the $y$-axis represents the power.  We use different colors for different statistics: red ($T_1$), green ($T_2$), cyan ($T_3$), blue ($T_4$), magenta ($T_5$), yellow ($T_6$), black ($T_7$).  

\begin{figure}[h]
           \begin{floatrow}
             \ffigbox{\includegraphics[width=6cm, height=5cm]{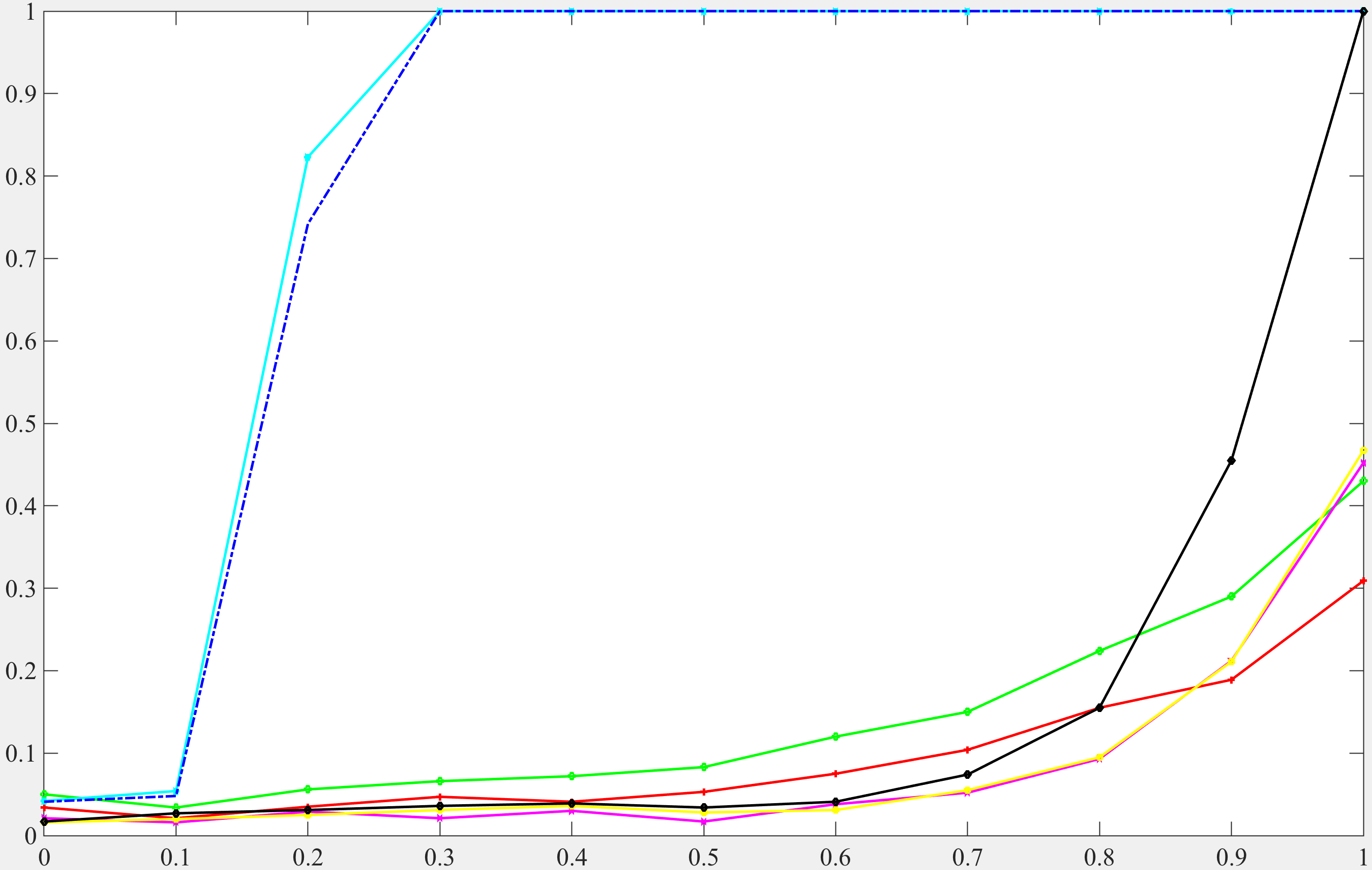}}{\caption{{\footnotesize{Powers under $\mathrm{H}_{a,1-1}$}}}\label{fig1}}
             \ffigbox{\includegraphics[width=6cm, height=5cm]{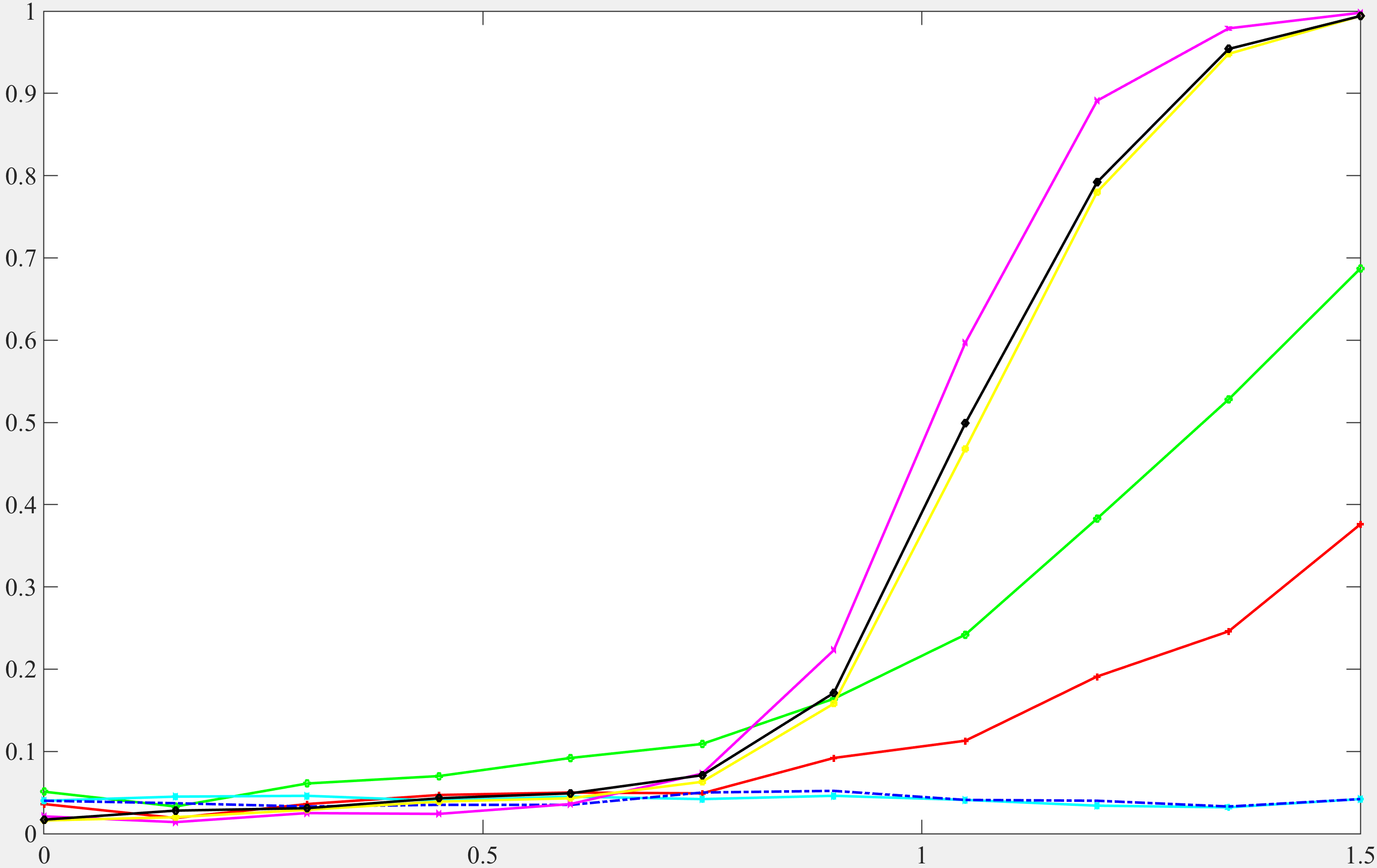}}{\caption{{\footnotesize{Powers under $\mathrm{H}_{a,1-2}$}}}\label{fig2}}
           \end{floatrow}
        \end{figure}

\begin{figure}[h]
           \begin{floatrow}
             \ffigbox{\includegraphics[width=6cm, height=5cm]{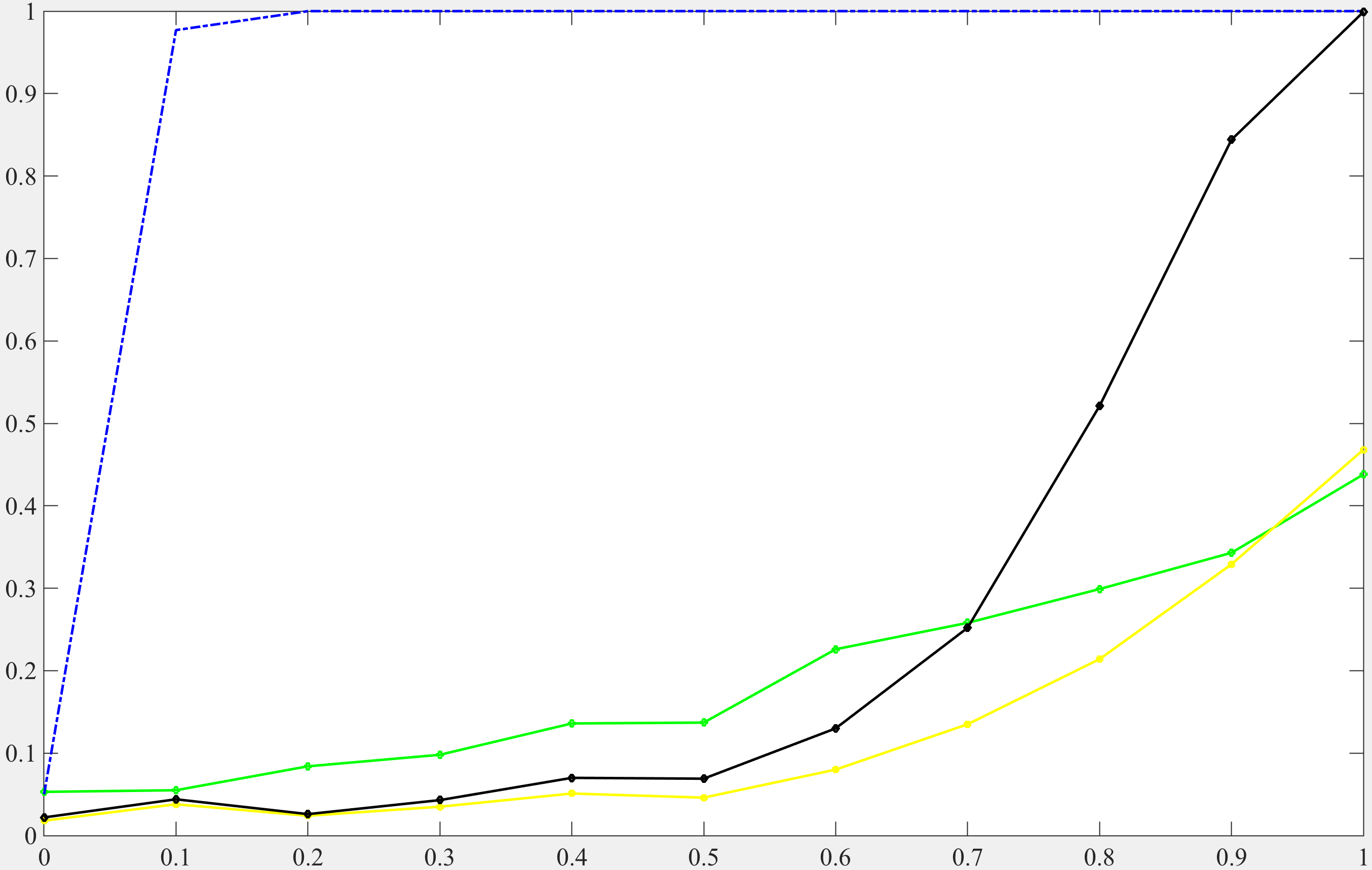}}{\caption{{\footnotesize{Powers under $\mathrm{H}_{a,2-1}$} }}\label{fig3}}
             \ffigbox{\includegraphics[width=6cm, height=5cm]{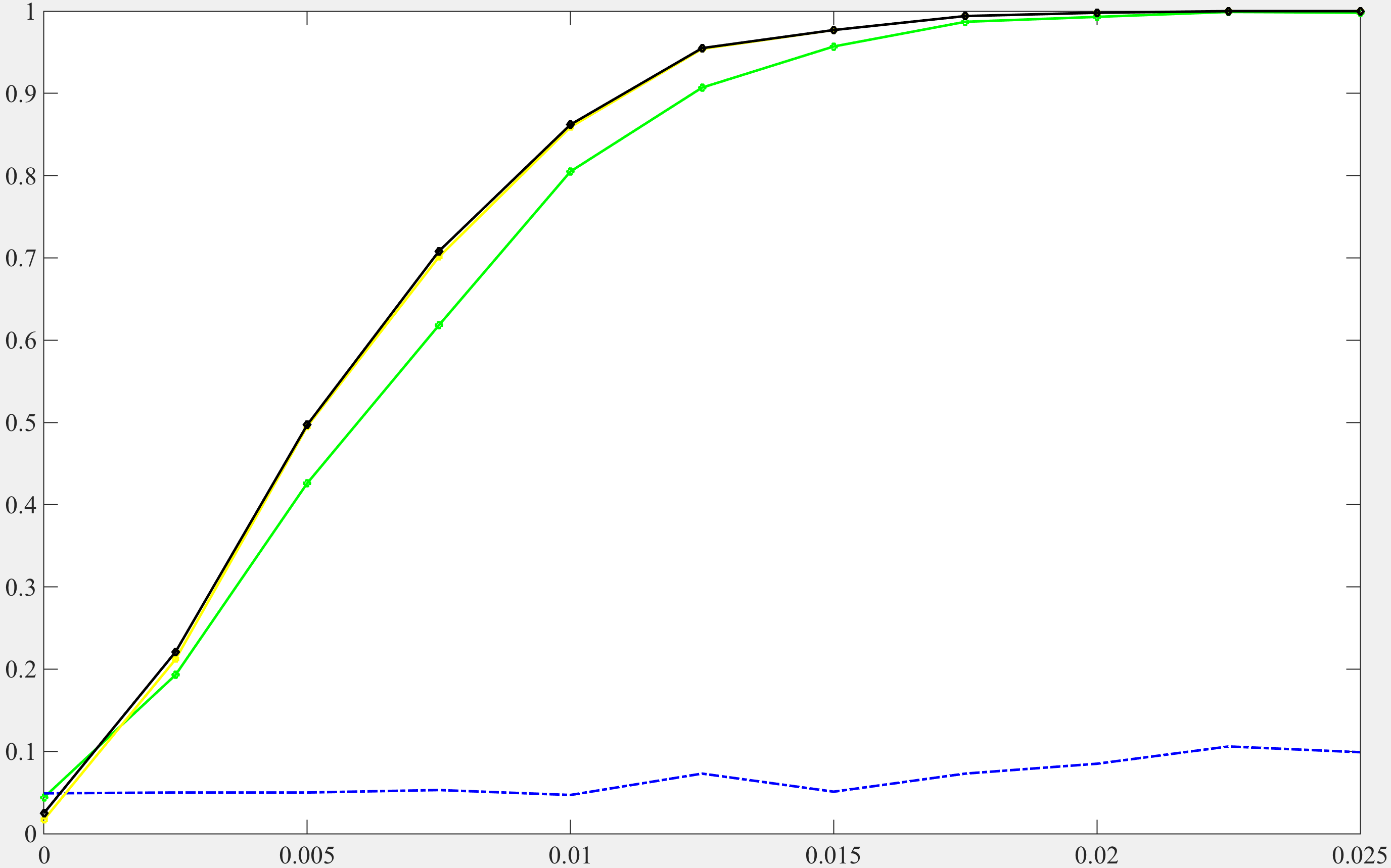}}{\caption{{\footnotesize{Powers under $\mathrm{H}_{a,2-2}$}}}\label{fig4}}
           \end{floatrow}
        \end{figure}

\begin{figure}[h]
           \begin{floatrow}
             \ffigbox{\includegraphics[width=6cm, height=5cm]{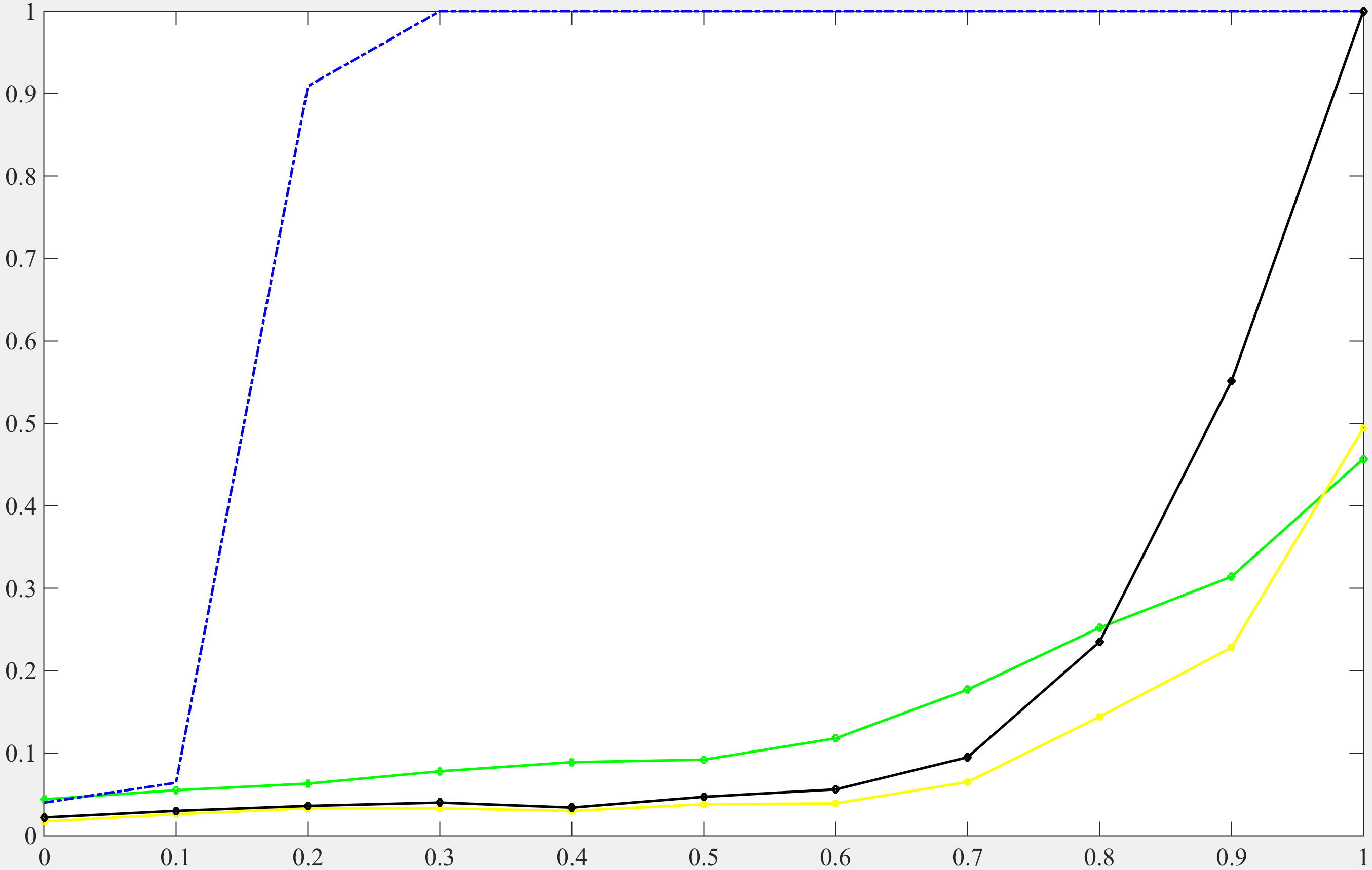}}{\caption{{\footnotesize{Powers under $\mathrm{H}_{a,3-1}$} }}\label{fig5}}
             \ffigbox{\includegraphics[width=6cm, height=5cm]{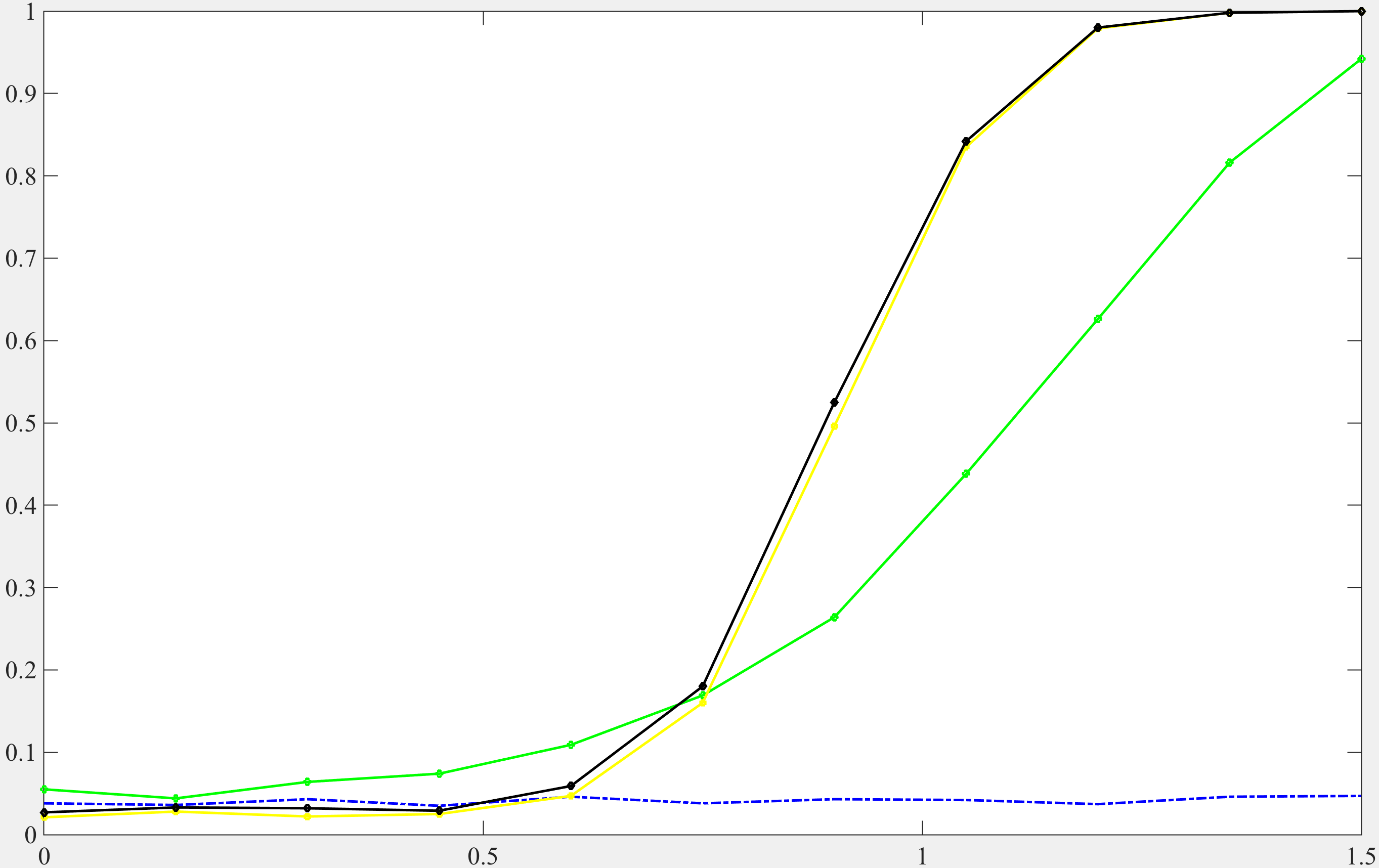}}{\caption{{\footnotesize{Powers under $\mathrm{H}_{a,3-2}$}}}\label{fig6}}
           \end{floatrow}
        \end{figure}

\end{document}